\def\|{|\;}
\theoremstyle{plain}
\newtheorem{theorem}{Theorem}[section]
\newtheorem{proposition}[theorem]{Proposition}
\newtheorem{lemma}[theorem]{Lemma}
\newtheorem{corollary}[theorem]{Corollary}
\theoremstyle{definition}
\newtheorem{remark}[theorem]{Remark}
\numberwithin{equation}{section}
\begin{document}


\title[The Hankel transform]{The classical Hankel transform
in the Kirillov model}
\author{Ehud Moshe Baruch}
\address{Department of Mathematics\\
         Technion\\
         Haifa , 32000\\
         Israel}
\email{embaruch@math.technion.ac.il}
\thanks{}

\keywords{Bessel functions, Hankel transforms , Kirillov models}
\subjclass{Primary: 22E50; Secondary: 22E35,11S37}
\date{October 2009}

\begin{abstract}
We give a new and simple proof of the Hankel inversion formula for the classical 
Hankel transform of index $\nu$ which holds for $\text{Re}(\nu)>-1$. Using the proof 
of this formula we obtain the full description of the Kirillov model for discrete series 
representations of  $SL(2,\mathbb{R})$ and $GL(2,\mathbb{R})$. 
\end{abstract}
\maketitle

\section{Introduction}
It has been noticed by \cite{cps1} (See also \cite{ggps},\cite{vil},\cite{mot}, \cite{bm3}) that 
the action of the Weyl element in the Kirillov model
of an irreducible unitary representation of $GL(2,\mathbb{R})$ is given by a
certain integral transform and that in the case of the discrete series this is
a classical Hankel transform of integer order. 
It was proved in \cite{dur} that the
classical Hankel transform of real order $\nu$ with $\nu>-1$  is an isomorphism of 
order two of a ceratin ``Schwartz'' space.  In this paper we give a simple and 
elementary proof of this fact using ``representation theoretic'' ideas and extend the 
result to complex $\nu$ with $\text{Re}(\nu)> -1$. Using these methods for the Hankel
transform of integral order we determine the smooth space of the discrete series
in the Kirillov model, thus giving for the first time an explicit model which is suitable
for various applications. In a future paper we will describe the Kirillov model for
the principal series and the complementary series and give an inversion formula
for a generalized Hankel transform.  

The inversion formula for the Hankel transform which states that the Hankel transform
is self reciprocal was studied by many authors starting from Hankel in \cite{han}. This
formula is classicaly stated as follows (\cite{wat} p.453).  Let  $J_{\nu}(z)$ be the 
classical J-Bessel 
function. Let $f$ be a complex valued function defined on the positive real line.
Then under certain assumptions on $f$ and $\nu$ (See \cite{wat} or Theorem~\ref{T:hankinv}) 
we have
\begin{equation} \label{E:Hankelinv}
f(z)=\int_{0}^{\infty}\int_{0}^{\infty}f(x)J_{\nu}(xy)J_{\nu}(yz)xydxdy
\end{equation}
In more modern notation we define the Hankel transform of order $\nu$ of $f$ to be 
$$
\mathit{h}_{\nu}(f)(y)
=\int_{0}^{\infty}f(x)J_{\nu}(xy)xdx
$$
Then under certain assumptions on $f$ and $\nu$ the Hankel transform is self 
reciprocal, that is, $\mathit{h}_{\nu}^{2}=Id$. A general discussion of the history
of this result and various attempts at a proof can be found in (\cite{wat}, p. 454). 
It is mentioned in \cite{wat}
that Hankel was the first to give the formula in 1869 and that Weyl (\cite{weyl} p.324)  
was the first to give a complete proof when $f$ is in a certain
space of twice differential functions and $\nu>-1$. Watson gives a complete
proof when $\nu$ is real, $\nu> -1/2$ and $\phi\in L^{1}((0,\infty),\sqrt{x}dx)$ and is 
of bounded variation in an interval around the point $z$ in 
equation~\ref{E:Hankelinv}. MacRobert \cite{mac},  proves the result for 
$\text{Re}(\nu)> -1$ under the additional assumption that $\phi$ is analytic. In more
recent results it was proved by \cite{zem} that the Hankel transform preserves a certain
``Schwartz'' space. Duran \cite{dur} gives a very elegant proof of the inversion 
formula on the ``Schwartz'' space when $\nu>-1$. In this paper we
will extend the inversion formula on a ``Schwartz'' space to complex $\nu$ such that 
$\text{Re}(\nu)> -1$. Our method 
is to ``move'' the Hankel
transform from the Schwartz space on which it acts to a different space using 
Fourier transform which we think of as an ``intertwining operator'' . The main part
of the proof is to compute the effect of this ``intertwining operator'' on the Hankel
transform. The operator is built in such a way that the Hankel transform is replaced
with an operator which sends a function $\phi$ on the real line to the function
$|x|^{-\nu-1}e^{\text{sgn}(x)\pi i(\nu+1)/2}\phi(-1/x)$. It is trivial to see that this operator
is self reciprocal and from this follows the same result for the Hankel transform.

In the case where $\nu$ is a positive integer, the operator above is giving the action
of the Weyl element of $SL(2,\mathbb{R})$ or $GL(2,\mathbb{R})$ on the
discrete series representations.  The operator which moves us from the action
above to the Hankel transform is an honest intertwining operator for a discrete
series representations which moves us from a model for the induced
space to the Kirillov model. Using the results above we can determine completely
the ``smooth'' space of this Kirillov model and give an explicit action in this model.

Our paper is divided as follows. In section 2 we describe the Schwartz space
for the general Hankel transform of order $\nu$ with $\text{Re}(\nu)>-1$ and
prove that the Hankel transform preserves this space. In section 3 we define
our `` intertwining operator'' and compute its composition with the Hankel transform.
Using that we prove the inversion formula for the Hankel transform. In section 4
we turn our attention to integer order and to the discrete series representations.
We compute the full image of the induced space into the ``Kirillov space''.
In section 5 we show that two subspaces in the full ``Kirillov''  space
are invariant. These subspaces which are basically the Schwartz spaces mentioned
above are irreducible under the action of $SL(2,\mathbb{R})$ 
and give a realization of the discrete series representations. Their sum gives
the classical Kirillov model for discrete series representations of $GL(2,\mathbb{R})$.

It is our purpose to make the presentation elementary and suitable for non 
representation theory specialists. We will use results from representation 
theory in sections 5 and 6 but the rest of the material will be self 
contained.

\section{A Schwartz space for the classical Hankel transform}
In this section we describe the Schwartz space (\cite{and},\cite{dur},\cite{zem}) for the Hankel 
transform and show that this space is invariant under the Hankel transform. While the proofs are
elementary and the results may follow from the references above, we view this section as crucial
for our results and we provide the proofs for completeness.   

Assume that $\text{Re}(\nu)>-1$ and  $x>0$ and  let  $J_{\nu}(x)$ be the 
classical J-Bessel function defined by
$$
J_{\nu}(x)=\sum_{k=0}^{\infty}\frac{(-1)^k(x/2)^{\nu+2k}}{\Gamma(k+1)\Gamma(k+\nu+1)}
$$
Let $f$ be a complex valued function on $[0,\infty)$. Define the Hankel transform of order $\nu$ by
$$
\mathcal{H}_{\nu}(f)(y)=
\int_{0}^{\infty}f(x)\sqrt{xy}J_{\nu}(2\sqrt{xy})\frac{dx}{x}.
$$
It is possible by a simple change of variable to go from this Hankel transform to the
``classical'' Hankel transform used in \cite{wat}. Let $S([0,\infty))$ be the Schwartz space
of functions on $[0,\infty)$. That is, $f:[0,\infty)\to \mathbb{C}$ is in $S([0,\infty))$  if
$f$  is smooth on  $[0,\infty)$ and $f$ and all its derivatives are rapidly decreasing
at $\infty$. Let 
$$
S_{\nu}([0,\infty))=\{f:[0,\infty)\to \mathbb{C} | f(x)=x^{1/2+\nu/2}f_{1}(x) 
\;\text{and} \;f_{1}\in S([0,\infty))\}
$$
It follows from \cite{dur} that when $\nu$ is real, $\nu> -1$ then $H_{\nu}$ is a linear
isomorphism of  $S_{\nu}([0,\infty))$ satisfying $H_{\nu}^2=Id$.   
Moreover, $H_{\nu}$ is an $L_{2}([0,\infty),dx/x)$ isometry. We will give a simple proof
that  $H_{\nu}$ is a linear isomorphism of  $S_{\nu}([0,\infty))$ satisfying $H_{\nu}^2=Id$ when
$\text{Re}(\nu)>-1$. We start by showing that $H_{\nu}$ preserves $S_{\nu}([0,\infty))$.
 
Let $D$ be the differential operator
$$
D=x\frac{d}{dx}.
$$

\begin{proposition}

(a) $D$ maps $S_{\nu}([0,\infty))$ into $S_{\nu}([0,\infty))$

(b) $\mathcal{H}_{\nu}(f)(y)$ is rapidly decreasing in $y$.

(c) $\mathcal{H}_{\nu}(Df)(y)=-D(\mathcal{H}_{\nu}(f))(y)$.

(d) $y^{-1/2-\nu/2}\mathcal{H}_{\nu}(f)(y)$ is smooth on $[0,\infty)$.
\end{proposition}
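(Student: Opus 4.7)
Part (a) is immediate from the product rule: if $f(x)=x^{1/2+\nu/2}f_1(x)$ with $f_1\in S([0,\infty))$, then
\[
Df(x)=x^{1/2+\nu/2}\bigl[(\tfrac12+\tfrac\nu2)f_1(x)+xf_1'(x)\bigr],
\]
and the bracket belongs to $S([0,\infty))$ since that space is closed under multiplication by $x$ and differentiation.

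For (c), the key observation is that $K(x,y):=\sqrt{xy}\,J_\nu(2\sqrt{xy})$ depends only on the product $xy$, so $D_xK=D_yK$. I would start from $\mathcal{H}_\nu(Df)(y)=\int_0^\infty f'(x)K(x,y)\,dx$, integrate by parts in $x$, and verify that boundary contributions vanish: at $x=\infty$ by rapid decrease of $f$, and at $x=0$ because $K(x,y)\sim c(xy)^{(\nu+1)/2}$ while $f(x)\sim c'x^{1/2+\nu/2}$, so the product is $O(x^{1+\nu})$ and vanishes since $\mathrm{Re}(\nu)>-1$. Rewriting $\partial_xK=D_yK/x$ then lets me pull $D_y$ out of the integral, giving $-D_y\mathcal{H}_\nu(f)(y)$.

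For (d), I would separate off the non-smooth prefactor of the Bessel function by writing $J_\nu(2\sqrt{xy})=(xy)^{\nu/2}j_\nu(xy)$, where $j_\nu(w)=\sum_{k\ge0}(-w)^k/(\Gamma(k+1)\Gamma(k+\nu+1))$ is entire. This yields
\[
y^{-1/2-\nu/2}\mathcal{H}_\nu(f)(y)=\int_0^\infty f_1(x)\,x^\nu j_\nu(xy)\,dx,
\]
and smoothness in $y$ on $[0,\infty)$ then follows by differentiating under the integral sign; the dominating bound uses the Schwartz decay of $f_1$ together with an at-most polynomial estimate on $|j_\nu^{(n)}|$ deducible from classical Bessel asymptotics.

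Part (b) is where the main work lies. I would first change variables $u=2\sqrt{xy}$ to rewrite
\[
\mathcal{H}_\nu(f)(y)=(4y)^{-1/2-\nu/2}\int_0^\infty f_1\bigl(u^2/(4y)\bigr)\,u^{\nu+1}J_\nu(u)\,du,
\]
then integrate by parts $n$ times using the Bessel recurrence $u^{\mu+1}J_\mu(u)=\tfrac{d}{du}\bigl[u^{\mu+1}J_{\mu+1}(u)\bigr]$ applied with $\mu=\nu,\nu+1,\ldots,\nu+n-1$. Each integration by parts produces a factor $1/(2y)$, coming from $\tfrac{d}{du}f_1(u^2/(4y))=(u/(2y))f_1'(u^2/(4y))$, and raises the Bessel index by one, leaving an integrand proportional to $f_1^{(n)}(u^2/(4y))\,u^{\nu+n+1}J_{\nu+n}(u)/(2y)^n$. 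Estimating the remaining integral using $|J_{\nu+n}(u)|\le Cu^{-1/2}$ for $u\ge 1$, the Schwartz decay of $f_1^{(n)}$ at infinity, and the substitution $u=2\sqrt{y}\,v$ to separate out the $y$-dependence yields $|\mathcal{H}_\nu(f)(y)|\le C_n y^{1/4-n/2}$ for $y\ge 1$; since $n$ is arbitrary this is rapid decrease. The main obstacle will be the bookkeeping in this iterated integration by parts, in particular checking at every stage that boundary contributions vanish: those at $u=0$ because $u^{\mu+1}J_{\mu+1}(u)=O(u^{2\mathrm{Re}(\mu)+2})$ with $2\mathrm{Re}(\mu)+2>0$ throughout, and those at $u=\infty$ because for each fixed $y>0$, $f_1^{(n)}(u^2/(4y))$ decays faster than any polynomial in $u$.
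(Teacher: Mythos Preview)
Your proposal is correct and, for parts (a), (b), and (d), follows essentially the paper's route: repeated integration by parts against the Bessel recurrence $\frac{d}{dz}\bigl[z^{\mu+1}J_{\mu+1}(z)\bigr]=z^{\mu+1}J_{\mu}(z)$, with each step producing a factor of $y^{-1}$ (the paper keeps the integration variable $x$ rather than substituting $u=2\sqrt{xy}$, but this is cosmetic). The one genuine difference is in (c). The paper computes $-D_y\mathcal{H}_\nu(f)$ by differentiating under the integral, which produces a $J_{\nu-1}$ term via the identity $\frac{d}{dy}\bigl[y^{\nu/2}J_\nu(2\sqrt{xy})\bigr]=x^{1/2}y^{(\nu-1)/2}J_{\nu-1}(2\sqrt{xy})$, and then integrates by parts in $x$ to return to $J_\nu$, finally recognizing the result as $\mathcal{H}_\nu(Df)$. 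Your argument bypasses these explicit Bessel manipulations entirely by observing that the kernel $K(x,y)=\sqrt{xy}\,J_\nu(2\sqrt{xy})$ depends only on the product $xy$, so $D_xK=D_yK$; a single integration by parts then moves the $D$ from $f$ to $K$ and out of the integral. This is more conceptual and shorter, at the cost of needing to justify differentiation under the integral sign (which the paper also needs, and which follows from the estimates in (d)).
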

\begin{proof}
(a) is immediate. (b), (c) and (d) are standard using differentation under the
integral and integration by parts. We include the proof for the sake of completeness.

We will need the following formulas:

$$\frac{d}{dx}\left (y^{-1/2}x^{(\nu+1)/2}J_{\nu +1}(2x^{1/2}y^{1/2})\right)
=x^{\nu/2}J_{\nu}(2x^{1/2}y^{1/2})
$$
and
$$
\frac{d}{dy}\left ( y^{-\nu/2}J_{\nu +1}(2x^{1/2}y^{1/2})\right)
=-x^{-1/2}y^{-(\nu+1)/2}J_{\nu}(2x^{1/2}y^{1/2})
$$
These formulas follow from the standard differentiation formulas of the Bessel function
(\cite{leb}, (5.3.5)).

We now prove (b). We shall use the first formula to do a repeated integration by
parts on the integral defining $H_{\nu}$.  Let $f(x)=x^{\nu/2+1/2}f_{1}(x)$ where $f_{1}\in S([0,\infty))$.
Then
\begin{equation} 
\begin{split}
\mathcal{H}_{\nu}(f)(y)&=
y^{1/2}\int_{0}^{\infty}f_{1}(x)\left( x^{\nu /2}J_{\nu}(2\sqrt{xy})\right)dx\\
&=\int_{0}^{\infty}\frac{d}{dx}\left( f_{1}(x) \right)
x^{(\nu+1) /2}J_{\nu+1}(2\sqrt{xy})dx
\end{split} 
\end{equation}
Repeating this process $n$ times will give
$$
\mathcal{H}_{\nu}(f)(y)=
y^{1-n}\int_{0}^{\infty}\frac{d^{n}}{(dx)^{n}}\left( f_{1}(x) \right)
(xy)^{(\nu+n) /2}J_{\nu+n}(2\sqrt{xy})dx
$$
Since the function $z^{\mu}J_{\mu}(z)$ is bounded for $Re(\mu)>0$ on $[0,\infty)$ and the
derivatives of $f_{1}$ are rapidly decreasing it follows
that the integral above is bounded hence $|\mathcal{H}_{\nu}(f)(y)|<<y^{1-n}$ and
we have that (b) holds.

For (c) we write
\begin{equation}
\begin{split}
-D(\mathcal{H}_{\nu}(f))(y)&=
-\int_{0}^{\infty}f(x)x^{-1/2}y\frac{d}{dy}
\left(y^{-\nu/2+1/2} y^{\nu /2}J_{\nu}(2\sqrt{xy})\right)dx \\
&=(\nu/2-1/2)\mathcal{H}_{\nu}(f)(y)-\int_{0}^{\infty}f(x)yJ_{\nu-1}(2\sqrt{xy})dx\\
&=(\nu/2-1/2)\mathcal{H}_{\nu}(f)(y)
-y\int_{0}^{\infty}\left(f(x)x^{\nu/2-1/2}\right) x^{-(\nu-1)/2}J_{\nu-1}(2\sqrt{xy})dx\\
&=(\nu/2-1/2)\mathcal{H}_{\nu}(f)(y)
+y^{1/2}\int_{0}^{\infty}\frac{d}{dx}\left(f(x)x^{-\nu/2+1/2}\right) 
x^{\nu/2}J_{\nu}(2\sqrt{xy})dx\\
&=\int_{0}^{\infty} (xf'(x))y^{1/2}J_{\nu}(2\sqrt{xy})\frac{dx}{x}\\
&=\mathcal{H}_{\nu}(Df)(y)
\end{split}
\end{equation}
To prove (d) we will differentiate under the integral. It is easy to see
that the integral converges absolutely and uniformely in $y$ hence we are
allowed to differentiate: 
\begin{equation}
\begin{split}
\frac{d}{dy}\left( y^{-1/2-\nu/2}\mathcal{H}_{\nu}(f))(y) \right)&=
\int_{0}^{\infty}f(x)x^{-1/2}\frac{d}{dy}\left( y^{-\nu /2}J_{\nu}(2\sqrt{xy})\right)dx \\
&=-\int_{0}^{\infty}f(x)y^{-(\nu+1)/2} J_{\nu+1}(2\sqrt{xy})dx\\
&=-\int_{0}^{\infty}f(x)x^{(\nu+1)/2}(xy)^{-(\nu+1)/2} J_{\nu+1}(2\sqrt{xy})dx
\end{split}
\end{equation}
Since $z^{-(\nu+1)/2}(J_{\nu}(z))$ is analytic and bounded on $[0,\infty)$ it follows
that  $y^{-1/2-\nu/2}\mathcal{H}_{\nu}(f))(y)$ is differentiable on $[0,\infty)$.
Continuing in the same way we can show that 
$$
\frac{d^{n}}{(dy)^{n}}\left(y^{-1/2-\nu/2}\mathcal{H}_{\nu}(f))(y) \right)=
(-1)^{n}\int_{0}^{\infty}f(x)x^{(\nu+n)/2}(xy)^{-(\nu+n)/2} J_{\nu+n}(2\sqrt{xy})dx
$$
hence $y^{-1/2-\nu/2}\mathcal{H}_{\nu}(f))(y)$  is smooth on $[0,\infty)$.
\end{proof}
\begin{corollary} \label{C:stab}
Assume that $\text{Re}(\nu)> -1$. Then $\mathcal{H}_{\nu}$ maps $S_{\nu}([0,\infty))$
into itself.
\end{corollary}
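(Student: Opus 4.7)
The plan is to combine parts (a)--(d) of the Proposition. Given $f \in S_\nu([0,\infty))$, part (d) already tells us that
\[
g(y) \;:=\; y^{-1/2-\nu/2}\,\mathcal{H}_\nu(f)(y)
\]
is smooth on $[0,\infty)$. Writing $\mathcal{H}_\nu(f)(y) = y^{1/2+\nu/2} g(y)$, the task reduces to showing $g \in S([0,\infty))$, i.e.\ that $g$ and all its derivatives are rapidly decreasing at infinity.

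Rapid decrease of $g$ itself is immediate from (b), since multiplication by $y^{-1/2-\nu/2}$ only introduces a polynomial-type factor and hence preserves rapid decay.

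For the higher derivatives I would iterate (a) and (c). Part (a) gives $D^n f \in S_\nu([0,\infty))$ for every $n \geq 0$; iterating (c) gives $D^n \mathcal{H}_\nu(f) = (-1)^n \mathcal{H}_\nu(D^n f)$; and applying (b) to $D^n f$ then shows that $D^n \mathcal{H}_\nu(f)$ is rapidly decreasing for every $n$. A short induction based on the elementary operator identity $D \circ y^a = y^a \circ (D+a)$ yields
\[
D^n\bigl(y^{1/2+\nu/2} g\bigr) \;=\; y^{1/2+\nu/2}\bigl(D + \tfrac{1}{2} + \tfrac{\nu}{2}\bigr)^n g,
\]
so each $\bigl(D + \tfrac{1}{2} + \tfrac{\nu}{2}\bigr)^n g$ is rapidly decreasing. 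Expanding the binomial and inducting on $n$ then peels off rapid decay of $D^n g$ itself for every $n$. Since $y^n g^{(n)}$ is a linear combination of $D^k g$ for $1 \leq k \leq n$, rapid decay of all $D^k g$ forces rapid decay of $y^n g^{(n)}$, and hence of $g^{(n)}$, at infinity. Combined with the smoothness on $[0,\infty)$ supplied by (d), this gives $g \in S([0,\infty))$ as required.

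The only real obstacle is the purely algebraic bookkeeping: the conjugation identity $y^{-a} D^n y^a = (D+a)^n$ and the invertible triangular relationship between $\{D^k g\}_{k \leq n}$ and $\{y^k g^{(k)}\}_{k \leq n}$. Both are elementary and carry no analytic content beyond what is already packaged in the Proposition.
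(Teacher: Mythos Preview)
Your argument is correct and follows essentially the same route as the paper's own proof: use (a), (c), (b) to show that every $D^n\mathcal{H}_\nu(f)$ is rapidly decreasing, invoke (d) for smoothness of $g$ on $[0,\infty)$, and then pass from rapid decay of the $D^n$-iterates to rapid decay of the ordinary derivatives of $g$. The paper compresses the last step into the phrase ``it follows immediately,'' whereas you spell out the conjugation identity $y^{-a}D^n y^{a}=(D+a)^n$ and the triangular relation between $\{D^k g\}$ and $\{y^k g^{(k)}\}$; your added detail is sound and in fact makes the passage more transparent.
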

\begin{proof}
Let $f\in S_{\nu}([0,\infty))$. It follows from (b) (or (d)) that $\mathcal{H}_{\nu}(f)(y)$ is smooth
on $(0,\infty)$ and that $D^{n}( \mathcal{H}_{\nu}(f))(y)$ is rapidly decreasing for
$n=0,1,2,\ldots$ It follows immediately that $\mathcal{H}_{\nu}(f))(y)$ and all its
derivatives are rapidly decreasing. By (d) 
$g(y)=y^{-1/2-\nu/2}\mathcal{H}_{\nu}(f))(y)$  is smooth on $[0,\infty)$ and it follows that
all its derivatives are rapidly decreasing hence we can write
$ \mathcal{H}_{\nu}(f)(y)=y^{1/2+\nu/2}g(y)$ for $g\in S([0,\infty))$.
\end{proof}

\section{The inversion formula for the Hankel transform}

We now turn to prove the self reciprocity of the Hankel transform.
The crucial idea is to define
an operator (``intertwining operator'') $T_{\nu}$ from the Schwartz space above to 
another
space of smooth functions (but not Schwartz in the usual sense) which we will
call $I_{\nu}$ and to compute the composition of the Hankel transform with this operator.
We will find an operator $\mathcal{W}_{\nu}$ on  $I_{\nu}$ so that the  following diagram commutes:
$$
\begin{CD}
S_{\nu}@>\mathcal{H}_{\nu}>>S_{\nu}\\
@VT_{\nu}VV @VVT_{\nu}V\\
I_{\nu}@>\mathcal{W}_{\nu}>>I_{\nu}\\ 
\end{CD}
$$
We now define the above operators. Let $f\in S_{\nu}([0,\infty))$. Extend $f$ to
$(-\infty,\infty)$ by setting it to be zero on the negative reals. We will denote this extension again by $f$.  Assume $\text{Re}(\nu)>-1$. Let $T_{\nu}(f)=(x^{-1/2+\nu/2}f)^{\lor}$ where
$\check{f}$ is the inverse Fourier transform. That is, for $z\in \mathbb{R}$ we let
\begin{equation} \label{E:Tmu}
T_{\nu}(f)(z)=(2\pi)^{-1/2}\int_{0}^{\infty}x^{-1/2+\nu/2}f(x)e^{i xz}dx
\end{equation}
Since $f$ is rapidly decreasing at $\infty$ and of order $x^{1/2+\nu/2}$ at $x=0$ it follows
that the integral is absolutely convergent. From the Plancherel theorem and the fact that
$f$ is continuous on $(0,\infty)$ it follows that $T_{\nu}$ is one to one.
We will show that $T_{\nu}$ maps $S_{\nu}([0,\infty))$ into a space of function $I_{\nu}$ which
will be defined below. First we define the operator $\mathcal{W}_{\nu}$: Let
$\phi:\mathbb{R}^{*}\to \mathbb{C}$. Define
$$
\mathcal{W}_{\nu}(\phi)(x)=|x|^{-\nu-1}e^{\text{sgn}(x)\pi i(\nu+1)/2}\phi(-1/x).
$$
Our main theorem of this section is the following:
\begin{theorem} \label{T:main}
Let $f\in S_{\nu}([0,\infty))$ then $T_{\nu}\circ \mathcal{H}_{\nu}(f)=
\mathcal{W}_{\nu}\circ T_{\nu}(f)$.
\end{theorem}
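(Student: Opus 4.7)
The plan is to write out both sides explicitly, interchange the order of integration, and reduce the identity to the classical Laplace transform of a Bessel function.

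First I would unfold definitions. On the left,
\[
T_\nu\bigl(\mathcal H_\nu(f)\bigr)(z)
=(2\pi)^{-1/2}\int_0^\infty\!\!\int_0^\infty f(x)\, y^{\nu/2}x^{-1/2}J_\nu(2\sqrt{xy})\,e^{iyz}\,dx\,dy,
\]
after absorbing $y^{-1/2+\nu/2}\cdot\sqrt{xy}/x=y^{\nu/2}x^{-1/2}$. On the right,
\[
\mathcal W_\nu\bigl(T_\nu f\bigr)(z)
=(2\pi)^{-1/2}|z|^{-\nu-1}e^{\sgn(z)\pi i(\nu+1)/2}\int_0^\infty x^{-1/2+\nu/2}f(x)e^{-ix/z}\,dx.
\]
So after pulling the $x$-integral outside on both sides, the statement reduces formally to the identity
\[
\int_0^\infty y^{\nu/2}J_\nu(2\sqrt{xy})\,e^{iyz}\,dy
=|z|^{-\nu-1}e^{\sgn(z)\pi i(\nu+1)/2}\,x^{\nu/2}e^{-ix/z}.
\tag{$\ast$}
\]

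To make ($\ast$) rigorous I would regularise: for $\epsilon>0$ replace $e^{iyz}$ by $e^{-(\epsilon-iz)y}$. With $p=\epsilon-iz$, $\operatorname{Re}(p)>0$, the classical Laplace-Bessel formula
\[
\int_0^\infty t^{\nu/2}J_\nu(2\sqrt{at})\,e^{-pt}\,dt=a^{\nu/2}p^{-\nu-1}e^{-a/p},
\qquad \operatorname{Re}(p)>0,\ \operatorname{Re}(\nu)>-1
\]
(Lebedev, and derivable from the series definition of $J_\nu$ term-by-term) gives
\[
\int_0^\infty y^{\nu/2}J_\nu(2\sqrt{xy})\,e^{-(\epsilon-iz)y}\,dy
=x^{\nu/2}(\epsilon-iz)^{-\nu-1}e^{-x/(\epsilon-iz)}.
\]
Letting $\epsilon\downarrow 0$ and using the principal branch, $(\epsilon-iz)^{-\nu-1}\to(-iz)^{-\nu-1}=|z|^{-\nu-1}e^{\sgn(z)\pi i(\nu+1)/2}$, while $-x/(\epsilon-iz)\to -x/(-iz)=-ix/z$, yielding ($\ast$) at least as a distributional identity.

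The real work is to transport this through the double integral. My plan is: fix $f\in S_\nu([0,\infty))$, and for $\epsilon>0$ form
\[
I_\epsilon(z):=(2\pi)^{-1/2}\int_0^\infty\!\!\int_0^\infty f(x)y^{\nu/2}x^{-1/2}J_\nu(2\sqrt{xy})e^{-(\epsilon-iz)y}\,dx\,dy.
\]
Because $|J_\nu(2\sqrt{xy})|\le C(xy)^{\operatorname{Re}(\nu)/2}$ near the origin and stays bounded at infinity, and because $f(x)=x^{1/2+\nu/2}f_1(x)$ with $f_1\in S([0,\infty))$, the integrand is dominated by an $L^1$ function of $(x,y)$ on $(0,\infty)^2$ for each $\epsilon>0$; Fubini then allows me to swap the order. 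Doing the $y$-integral first by the Laplace formula collapses $I_\epsilon(z)$ to
\[
(2\pi)^{-1/2}(\epsilon-iz)^{-\nu-1}\int_0^\infty x^{-1/2+\nu/2}f(x)e^{-x/(\epsilon-iz)}\,dx.
\]
For the right-hand side, letting $\epsilon\downarrow 0$ is immediate by dominated convergence using $|e^{-x/(\epsilon-iz)}|=e^{-\epsilon x/(\epsilon^2+z^2)}\le 1$ and the integrability of $x^{-1/2+\nu/2}f(x)$. For the left-hand side, I would rewrite
\[
I_\epsilon(z)=(2\pi)^{-1/2}\int_0^\infty y^{-1/2+\nu/2}\mathcal H_\nu(f)(y)e^{-(\epsilon-iz)y}\,dy
\]
and invoke Proposition~(b) and Corollary~\ref{C:stab}, which give $y^{-1/2+\nu/2}\mathcal H_\nu(f)(y)=y^{\nu}g(y)$ with $g\in S([0,\infty))$; dominated convergence again yields $I_\epsilon(z)\to T_\nu(\mathcal H_\nu(f))(z)$. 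Equating the two limits gives the theorem.

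The main obstacle is the legitimate swap of the order of integration and the passage $\epsilon\downarrow 0$, both of which must be handled with uniform (in $\epsilon$) dominating functions; the branch identification $(-iz)^{-\nu-1}=|z|^{-\nu-1}e^{\sgn(z)\pi i(\nu+1)/2}$ is a routine check but needs the principal branch of the logarithm to be consistent throughout.
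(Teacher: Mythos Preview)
Your proof is correct and follows essentially the same route as the paper: regularise $T_\nu$ with a factor $e^{-\epsilon y}$, swap the order of integration, evaluate the inner Bessel integral in closed form, and let $\epsilon\downarrow 0$ via dominated convergence (invoking $\mathcal H_\nu(f)\in S_\nu$ for the left-hand limit). The only cosmetic difference is that the paper quotes Weber's integral $\int_0^\infty u^{\nu+1}e^{-\alpha u^2}J_\nu(\beta u)\,du$ and performs the substitution $y=u^2$ explicitly, whereas you quote the equivalent Laplace--Bessel formula $\int_0^\infty t^{\nu/2}J_\nu(2\sqrt{at})e^{-pt}\,dt=a^{\nu/2}p^{-\nu-1}e^{-a/p}$ directly; your justification of Fubini and of the two $\epsilon\to 0$ limits is in fact more detailed than the paper's.
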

Notice that this theorem is precisely the statement that the above diagram 
is commutative. We regard it as our main theorem since it allows us to move
from the complicated Hankel transform $\mathcal{H}_{\nu}$ to the simple operator
$\mathcal{W}_{\nu}$.
\begin{proof}
The proof is based on the Weber integral (\cite{leb}, p.132):
\begin{equation} \label{GR1}
\int_{0}^{\infty}u^{\nu+1}e^{-\alpha u^{2}}J_{\nu}(\beta u)du=\frac{\beta^{\mu}}{(2\alpha)^{\mu+1}}e^{-\frac{\beta^2}{4\alpha}}
\end{equation}
where $\text{Re}(\mu)>-1$, $\text{Re}(\alpha)>0$ and  
$\text{Re}(\beta)>0$.
Assume that $\text{Re}(\nu)>-1$.  It follows from the dominated convergence theorem
that if $f\in S_{\nu}([0,\infty))$ then 
$$
T_{\nu}(f)(z)=\lim_{\epsilon \to 0^{+}}(2\pi)^{-1/2}\int_{0}^{\infty}y^{-1/2+\nu/2}f(y)e^{i yz}e^{-\epsilon y}dy
$$
Hence  
\begin{equation}
\begin{split}
&T_{\nu}\circ \mathcal{H}_{\nu}(f)(z)=\\
&=\lim_{\epsilon \to 0^{+}}(2\pi)^{-1/2}\int_{0}^{\infty}y^{-1/2+\nu/2}e^{i yz}e^{-\epsilon y}\int_{0}^{\infty}f(x)\sqrt{xy}
J_{\nu}(2\sqrt{xy})\frac{dx}{x}\, dy\\
&= \lim_{\epsilon \to 0^{+}}(2\pi)^{-1/2}\int_{0}^{\infty}f(x)x^{-1/2}
\int_{0}^{\infty}y^{\nu/2}e^{-(-iz+\epsilon)y}J_{\nu}(2\sqrt{xy})dydx\\
&= \lim_{\epsilon \to 0^{+}}2(2\pi)^{-1/2}\int_{0}^{\infty}f(x)x^{-1/2}
\left(\int_{0}^{\infty}u^{\nu+1}e^{-(-iz+\epsilon)u^2}J_{\nu}(2\sqrt{x}u)du\right)dx\\
&\overset{(\ref{GR1})}{=}\lim_{\epsilon \to 0^{+}}(2\pi)^{-1/2}\int_{0}^{\infty}f(x)x^{-1/2+\nu/2}(-iz+\epsilon)^{-\nu-1}e^{-\frac{x}{\epsilon -iz}}dx\\
&=(2\pi)^{-1/2}|z|^{-\nu-1}e^{\text{sgn}(z)\pi i(\nu+1)/2}\int_{0}^{\infty}f(x)x^{-1/2+\nu/2}e^{\frac{ix}{-z}}dx\\
&=|z|^{-\nu-1}e^{\text{sgn}(z)\pi i(\nu+1)/2}T_{\nu}(f)(-1/z).\\
\end{split}
\end{equation}

\end{proof}
As a Corollary we obtain the inversion formula of the Hankel transform on the
Schwartz space. For the inversion formula on a bigger space see \cite{wat}.
\begin{theorem} \label{T:hankinv}
Assume $\text{Re}(\nu)>-1$ and $f\in S_{\nu}([0,\infty))$. Then
$$
\mathcal{H}_{\nu}\circ \mathcal{H}_{\nu}(f)=f
$$
\end{theorem}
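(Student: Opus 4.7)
The plan is to exploit the commutative diagram of Theorem~\ref{T:main} together with the fact that $T_\nu$ is injective. The key observation is that the operator $\mathcal{W}_\nu$, unlike $\mathcal{H}_\nu$, is manifestly self-reciprocal, so self-reciprocity of $\mathcal{H}_\nu$ follows by a purely formal argument once the diagram commutes.

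First I would check by direct computation that $\mathcal{W}_\nu^2 = \mathrm{Id}$. Unwinding the definition,
\begin{equation*}
\mathcal{W}_\nu^2(\phi)(x) = |x|^{-\nu-1}e^{\sgn(x)\pi i(\nu+1)/2}\,|-1/x|^{-\nu-1}e^{\sgn(-1/x)\pi i(\nu+1)/2}\phi(x),
\end{equation*}
and since $\sgn(-1/x) = -\sgn(x)$ and $|-1/x|^{-\nu-1} = |x|^{\nu+1}$, the phases cancel and the moduli collapse to give $\phi(x)$.

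Next, using Corollary~\ref{C:stab}, $\mathcal{H}_\nu(f) \in S_\nu([0,\infty))$, so Theorem~\ref{T:main} applies to $\mathcal{H}_\nu(f)$ as well. Applying $T_\nu\circ \mathcal{H}_\nu = \mathcal{W}_\nu \circ T_\nu$ twice gives
\begin{equation*}
T_\nu\bigl(\mathcal{H}_\nu^2(f)\bigr) = \mathcal{W}_\nu\bigl(T_\nu(\mathcal{H}_\nu(f))\bigr) = \mathcal{W}_\nu^2\bigl(T_\nu(f)\bigr) = T_\nu(f).
\end{equation*}

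Finally, I would invoke the injectivity of $T_\nu$ (already noted in the paper via the Plancherel theorem and continuity of $f$ on $(0,\infty)$) to conclude $\mathcal{H}_\nu^2(f) = f$. There is no real obstacle here: all the analytic work—absolute convergence of the integrals defining $T_\nu$, the swap of order of integration, the Weber integral identity, and the boundary behavior near $0$ and $\infty$—was absorbed into the proof of Theorem~\ref{T:main}, and stability of $S_\nu([0,\infty))$ under $\mathcal{H}_\nu$ was handled in Corollary~\ref{C:stab}. The remaining argument is a one-line diagram chase.
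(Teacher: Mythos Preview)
Your proposal is correct and follows essentially the same route as the paper: verify $\mathcal{W}_\nu^2=\mathrm{Id}$, apply the intertwining relation $T_\nu\circ\mathcal{H}_\nu=\mathcal{W}_\nu\circ T_\nu$ twice, and conclude by injectivity of $T_\nu$. The only difference is that you spell out the self-reciprocity of $\mathcal{W}_\nu$ and the invocation of Corollary~\ref{C:stab} in slightly more detail than the paper does.
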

\begin{proof}
This follows immediately from Theorem~\ref{T:main} and the self reciprocity of 
$\mathcal{W}_{\nu}$,
$$
\mathcal{W}_{\nu}\circ \mathcal{W}_{\nu}=Id
$$
which is easy to check. The argument is the following. Let $f\in S_{\nu}([0,\infty))$. Then 
$$
T_{\nu}\circ \mathcal{H}_{\nu}\circ \mathcal{H}_{\nu}(f)=
\mathcal{W}_{\nu}\circ T_{\nu}\circ \mathcal{H}_{\nu}(f)=
\mathcal{W}_{\nu}\circ \mathcal{W}_{\nu}\circ T_{\nu}(f)=T_{\nu}(f).
$$
Since $T_{\nu}$ is one to one, it follows that $\mathcal{H}_{\nu}\circ \mathcal{H}_{\nu}(f)=f$.
\end{proof}
We can also define the space $I_{\nu}$ (which we think of as an induced representation
space).  
\begin{equation} \label{E:Inu}
I_{\nu}=\{\phi :\mathbb{R}\to \mathbb{C}\;| \;
\text{$\phi $ is smooth on $\mathbb{R}$ and 
$\mathcal{W}_{\nu}(\phi)$ is smooth on $\mathbb{R}$} \}  . 
\end{equation}
\begin{proposition} \label{P:map}
$T_{\nu}$ maps $S_{\nu}([0,\infty))$ into $I_{\nu}$.
\end{proposition}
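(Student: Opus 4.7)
The plan is to verify the two defining conditions of $I_\nu$ separately: smoothness of $T_\nu(f)$ on $\mathbb{R}$, and smoothness of $\mathcal{W}_\nu(T_\nu(f))$ on $\mathbb{R}$.

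For the first condition, I would write $f(x)=x^{1/2+\nu/2}f_1(x)$ with $f_1\in S([0,\infty))$, so that
\begin{equation*}
T_\nu(f)(z)=(2\pi)^{-1/2}\int_0^\infty x^\nu f_1(x)e^{ixz}\,dx .
\end{equation*}
Since $\operatorname{Re}(\nu)>-1$, the factor $x^\nu$ is integrable at $0$; since $f_1$ is Schwartz, the product $x^{k+\nu}f_1(x)$ is in $L^1([0,\infty))$ for every $k\ge 0$. Therefore one may differentiate under the integral sign arbitrarily many times, with each $z$-derivative producing a factor of $ix$, and the resulting integrals all converge absolutely and uniformly in $z$. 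This shows $T_\nu(f)\in C^\infty(\mathbb{R})$.

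For the second condition, the natural approach is not to work with the formula for $\mathcal{W}_\nu(T_\nu(f))$ directly — the factor $|x|^{-\nu-1}$ combined with $T_\nu(f)(-1/x)$ makes smoothness at $x=0$ very much non-obvious — but instead to use the intertwining identity already established. By Theorem~\ref{T:main},
\begin{equation*}
\mathcal{W}_\nu\circ T_\nu(f)=T_\nu\circ \mathcal{H}_\nu(f).
\end{equation*}
By Corollary~\ref{C:stab}, $\mathcal{H}_\nu(f)$ lies in $S_\nu([0,\infty))$, so exactly the same argument as in the previous paragraph, applied to $\mathcal{H}_\nu(f)$ in place of $f$, shows that $T_\nu(\mathcal{H}_\nu(f))$ is smooth on $\mathbb{R}$. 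Combined with the first condition, this proves $T_\nu(f)\in I_\nu$.

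The only place where anything subtle happens is the smoothness of $\mathcal{W}_\nu(T_\nu(f))$ at the origin; the main obstacle would be attacking this directly from the explicit formula defining $\mathcal{W}_\nu$. The content of the proof is really the observation that Theorem~\ref{T:main} converts this awkward local question at $x=0$ into the easy global question of smoothness of a Fourier-type integral of a function in $S_\nu([0,\infty))$, which is disposed of by differentiation under the integral sign.
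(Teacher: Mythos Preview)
Your proposal is correct and follows essentially the same approach as the paper: first show $T_\nu(f)$ is smooth by noting that $x^{n}\cdot x^{-1/2+\nu/2}f(x)$ is absolutely integrable for all $n\ge 0$, then use Theorem~\ref{T:main} together with Corollary~\ref{C:stab} to reduce the smoothness of $\mathcal{W}_\nu(T_\nu(f))$ to the already-established smoothness of $T_\nu$ applied to an element of $S_\nu([0,\infty))$.
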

\begin{proof}
 Let $f\in S_{\nu}([0,\infty))$. We will show that $\phi=T_{\nu}(f)\in I_{\nu}$. Let
$g(x)= x^{-1/2+\nu/2}f(x)$. Then 
$T_{\nu}(f)=\check{g}$. Since $x^{n}g(x)$ is absolutely integrable for
every positive integer $n$, it follows from standard Fourier analysis that
$\phi=\check{g}$ is smooth. Now apply $T_{\nu}$ to $\mathcal{H}_{\nu}(f)$. Since
$\mathcal{H}_{\nu}(f)\in S_{\nu}([0,\infty))$ it follows from the same argument that
$T_{\nu}(\mathcal{H}_{\nu}(f))$ is smooth. But from Theorem~\ref{T:main} this is
precisely $\mathcal{W}_{\nu}(T_{\nu}(f))=\mathcal{W}_{\nu}(\phi)$.
\end{proof}
\subsection{$L^2$ Isoemtry of the real Hankel transform}
The $L^{2}$ isometry of the Hankel transform is well known and follows from the Plancherel 
formula for the Hankel transform. (See for example \cite{and})

:  For $f_{1},f_{2}\in S_{\nu}([0,\infty))$ we let
$$
<f_{1},f_{2}> = \int_{0}^{\infty}f_{1}(x)\overline{f_{2}(x)}\frac{dx}{x}
$$
where this integral converges absolutely when $\text{Re}(\nu)>-1$.
\begin{proposition}
Assume $\nu$ is real and $\nu>-1$. Let $f_{1},f_{2}\in S_{\nu}([0,\infty))$. Then
$$
<f_{1}\:,\:\mathcal{H}_{\nu}(f_{2})> \;=\; <\mathcal{H}_{\nu}(f_{1})\:,\:f_{2}>.
$$
\end{proposition}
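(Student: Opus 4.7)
The plan is to recognize this as a standard ``adjoint'' statement obtained from Fubini together with the built-in symmetry of the kernel, so the only real work is justifying the exchange of order of integration on $S_\nu([0,\infty))$.

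First I would record two basic facts. Since $\nu$ is real, the Bessel function $J_\nu(2\sqrt{xy})$ is real-valued on the positive quadrant, so conjugation commutes with $\mathcal{H}_\nu$: $\overline{\mathcal{H}_\nu(f_2)(y)} = \mathcal{H}_\nu(\overline{f_2})(y)$. Moreover the kernel
\begin{equation*}
K(x,y)=\sqrt{xy}\,J_\nu(2\sqrt{xy})
\end{equation*}
is manifestly symmetric in $x$ and $y$. Granting these, the computation is immediate:
\begin{equation*}
\langle f_1,\mathcal{H}_\nu(f_2)\rangle
=\int_0^\infty\!\!\int_0^\infty f_1(y)\overline{f_2(x)}\,K(x,y)\,\frac{dx}{x}\frac{dy}{y}
\end{equation*}
after Fubini and the substitution $x\leftrightarrow y$ becomes $\int\!\int f_1(x)\overline{f_2(y)}\,K(x,y)\frac{dx}{x}\frac{dy}{y}=\langle \mathcal{H}_\nu(f_1),f_2\rangle$.

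The one nontrivial step is to verify absolute integrability of $|f_1(y)||f_2(x)||K(x,y)|/(xy)$ over $(0,\infty)^2$, and this is where I expect the only mild obstacle (because for $-1<\nu<0$ the Bessel function blows up near zero). I would use the two standard bounds
\begin{equation*}
|J_\nu(z)|\le C_\nu z^{\nu}\quad (0<z\le 1),\qquad |J_\nu(z)|\le C_\nu z^{-1/2}\quad (z\ge 1),
\end{equation*}
the first from the defining power series, the second from the classical asymptotic expansion. Translating to the variable $z=2\sqrt{xy}$, this gives $|K(x,y)|\le C\,(xy)^{(\nu+1)/2}$ when $xy\le 1/4$ and $|K(x,y)|\le C\,(xy)^{1/4}$ when $xy\ge 1/4$. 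Writing $f_i(x)=x^{1/2+\nu/2}f_{i,1}(x)$ with $f_{i,1}\in S([0,\infty))$, the integrand is dominated by $C\,|f_{1,1}(y)||f_{2,1}(x)|(xy)^\nu$ on $\{xy\le 1/4\}$ and by $C\,|f_{1,1}(y)||f_{2,1}(x)|\,x^{\nu/2-1/4}y^{\nu/2-1/4}$ on $\{xy\ge 1/4\}$.

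To conclude, I would split $(0,\infty)^2$ into the four quadrants determined by $x\lessgtr 1$, $y\lessgtr 1$. On $\{x\le 1,\,y\le 1\}$ the first bound applies and $\int_0^1\!\!\int_0^1 (xy)^\nu\,dx\,dy$ converges because $\nu>-1$. On the unbounded quadrants the Schwartz decay of $f_{1,1}$ and $f_{2,1}$ absorbs the polynomial factors $x^{\nu/2-1/4},y^{\nu/2-1/4}$ (using in the mixed quadrants that on the small-variable side one still has only the mild singularity $\int_0^1 x^{\nu/2-1/4}\,dx<\infty$ since $\nu>-1$ forces $\nu/2-1/4>-1$). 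Therefore Fubini applies, the symmetry argument goes through, and the proposition follows.
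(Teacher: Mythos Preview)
Your approach is exactly the paper's: the paper simply states that the proof ``is immediate using Fubini and the fact that $J_{\nu}(x)$ takes real values when $\nu$ and $x$ are real,'' and you carry this out. In fact you supply more than the paper does, since you actually verify the absolute integrability needed for Fubini via the small- and large-argument bounds on $J_\nu$; the paper leaves this entirely to the reader.
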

The proof is immediate using Fubini and the fact that $J_{\nu}(x)$ takes real values
when $\nu $ and $x$ are real.
\begin{corollary} \label{C:isometry}
Assume $\nu$ is real and $\nu>-1$. Let $f_{1},f_{2}\in S_{\nu}([0,\infty))$. Then
$$
<\mathcal{H}_{\nu}(f_{1})\:,\:\mathcal{H}_{\nu}(f_{2})>\; =\; <f_{1}\:,\:f_{2})>.
$$
\end{corollary}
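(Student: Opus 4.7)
The plan is to combine the symmetry of $\mathcal{H}_\nu$ with respect to the pairing $\langle\cdot,\cdot\rangle$ (the immediately preceding Proposition) with the inversion formula $\mathcal{H}_\nu \circ \mathcal{H}_\nu = \mathrm{Id}$ from Theorem~\ref{T:hankinv}. Together these two ingredients make the corollary essentially a one-line calculation, so there is no serious obstacle here; the real work has already been done earlier in the section.

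First I would note that by Corollary~\ref{C:stab}, since $\nu$ is real with $\nu > -1$, the element $\mathcal{H}_\nu(f_2)$ lies in $S_\nu([0,\infty))$. In particular the pairing $\langle f_1, \mathcal{H}_\nu(f_2)\rangle$ is well-defined and absolutely convergent, as is $\langle \mathcal{H}_\nu(f_1), \mathcal{H}_\nu(f_2)\rangle$. This legitimizes applying the preceding Proposition with its second argument replaced by $\mathcal{H}_\nu(f_2)$.

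Next I would apply that Proposition to $f_1$ and $\mathcal{H}_\nu(f_2)$, which gives
$$
\langle \mathcal{H}_\nu(f_1),\, \mathcal{H}_\nu(f_2)\rangle \;=\; \langle f_1,\, \mathcal{H}_\nu(\mathcal{H}_\nu(f_2))\rangle.
$$
By Theorem~\ref{T:hankinv}, the inner $\mathcal{H}_\nu\circ\mathcal{H}_\nu$ on the right collapses to the identity, so the right-hand side equals $\langle f_1, f_2\rangle$, which is exactly the claim. The symmetry step uses Fubini on the double integral defining the pairing together with the reality of $J_\nu$ for real $\nu$ and real argument (exactly as invoked in the Proposition), and the collapse step uses nothing more than Theorem~\ref{T:hankinv}. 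No additional estimates are needed beyond what is already guaranteed by membership in $S_\nu([0,\infty))$.
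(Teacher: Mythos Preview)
Your proof is correct and follows exactly the approach implied by the paper: the corollary is stated immediately after the symmetry Proposition and is meant to follow by applying that Proposition with $f_2$ replaced by $\mathcal{H}_\nu(f_2)$ and then invoking the inversion formula of Theorem~\ref{T:hankinv}. The paper gives no explicit proof, but your one-line derivation is precisely what is intended.
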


\section{The Induced space for the Discrete Series}
In this section we define an action of the group $G=SL(2,\mathbb{R})$ on
the space $I_{d}$ where $d$ is a positive integer. This is the smooth part
of the full induced space of the Discrete series representation. 
Using the results in the first section we will compute the image of this
space under the inverse of the intertwining map $T_{d}^{-1}$ which was
defined in (\ref{E:Tmu}). The image gives us a ``Kirillov space'' for the full induced.
It is well known that the full induced is reducible and has two 
closed invariant irreducible subspaces. We will later show
one of them is the space $S_{d}([0,\infty))$. In this section we will
show that the image of  $S_{d}$ under the operator $T_{d}$ is invariant.

Let $G=SL(2,\mathbb{R})$. Let $N$ and $A$ be subgroups of $G$ defined by 
$$
N=\left\{n(x)=
\begin{pmatrix}
1 & x \\
0 & 1 \\
\end{pmatrix} :\: x \in \mathbb{R} \right\}
$$
$$
A= \left\{s(z)=
\begin{pmatrix}
z & 0 \\
0 & z^{-1} \\
\end{pmatrix}:\: z\in \mathbb{R}^{*} \right\}.
$$
Let 
$$
w = 
\begin{pmatrix}
0   & -1\\
1   & 0 \\
\end{pmatrix}
$$
and 
$$
r(\theta)=
\begin{pmatrix}
cos(\theta)   & sin(\theta)\\
-sin(\theta)   & cos(\theta) \\
\end{pmatrix}.
$$

\subsection{The asymptotics description of the space $I_{d}$}
Let $d$ be a positive integer. The space $I_{d}$ is 
the space of smooth functions $\phi$ on $\mathbb{R}$ such that
$$
\mathcal{W}_{d}(\phi)(x)=(i)^{d+1}x^{-d-1}\phi(-1/x)
$$
is smooth. This is also the space of smooth functions $\phi$ such that
$$
w(\phi)(x)=x^{-d-1}\phi(-1/x)
$$
is smooth. We first prove some basic properties of this space:
\begin{lemma} \label{L:diff}
The differential operators $d/dx$ and  $D=x(d/dx)$ map $I_{d}$ into itself.
\end{lemma}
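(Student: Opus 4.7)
The smoothness of $\phi'$ and of $D\phi = x\phi'$ on $\mathbb{R}$ is immediate from the hypothesis $\phi\in C^\infty(\mathbb{R})$, so the real content of the statement is that $w(\phi')$ and $w(D\phi)$, which are defined a priori only on $\mathbb{R}^*$, extend smoothly to all of $\mathbb{R}$. The strategy I would follow is to express each of $w(\phi')$ and $w(D\phi)$ as an explicit polynomial combination of $\psi := w(\phi)$ and $\psi'$, which are smooth on $\mathbb{R}$ by the assumption $\phi\in I_d$.

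Concretely, for $x\neq 0$ the identity $\psi(x) = x^{-d-1}\phi(-1/x)$ can be differentiated using the chain rule (noting $(-1/x)' = 1/x^2$) to give
$$
\psi'(x) = -(d+1)\,x^{-d-2}\phi(-1/x) + x^{-d-3}\phi'(-1/x).
$$
Solving this for $x^{-d-3}\phi'(-1/x)$ and using $x^{-d-1}\phi(-1/x) = \psi(x)$, I expect to obtain
$$
x^{-d-1}\phi'(-1/x) = x^2\psi'(x) + (d+1)\,x\,\psi(x).
$$
From the definitions this reads $w(\phi')(x) = x^2\psi'(x) + (d+1)x\,\psi(x)$, and, after multiplying by $-x^{-1}$, $w(D\phi)(x) = -x\psi'(x)-(d+1)\psi(x) = -(D+d+1)\psi(x)$.

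Both right-hand sides are smooth on all of $\mathbb{R}$, so $w(\phi')$ and $w(D\phi)$ extend smoothly across $x=0$, which is exactly the condition required to lie in $I_d$. There is no serious obstacle: the derivation of the two identities takes place on $\mathbb{R}^*$, but the polynomial nature of the resulting expressions in $\psi$ and $\psi'$ automatically supplies the smooth extension. As a bonus, the second formula expresses the fact that, up to the shift by $-(d+1)$, the operator $w$ intertwines $D$ with $-D$, a fact that will presumably be useful in subsequent sections when the $SL(2,\mathbb{R})$-action on $I_d$ is analyzed.
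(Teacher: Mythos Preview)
Your proof is correct and follows essentially the same approach as the paper: both differentiate $\psi=w(\phi)$ via the chain rule and then rearrange to exhibit $w(\phi')$ and $w(D\phi)$ as polynomial expressions in $\psi$ and $\psi'$, hence smooth. Your version states the resulting identities $w(\phi')=x^2\psi'+(d+1)x\psi$ and $w(D\phi)=-(D+d+1)\psi$ more explicitly, but the underlying computation is identical to the paper's.
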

\begin{proof}
Let $\phi\in I_{d}$. Then $w(\phi)(x)$ is smooth hence
$w(\phi)'(x)=-(d+1)x^{-d-2}\phi(-1/x)+x^{-d-3}\phi'(-1/x)$ is smooth.
Hence $xw(\phi)'(x)=-(d+1)x^{-d-1}\phi(-1/x)+x^{-d-2}\phi'(-1/x)$ is smooth.
Since $x^{-d-1}\phi(-1/x)$ is smooth it follows that $x^{-d-2}\phi'(-1/x)$ is smooth.
It is clear that $\phi'(x)=d/dx \,\phi$ and $D\phi$ are smooth. To finish the proof 
we need to show that $w(\phi')$ and $w(D\phi)$
are smooth. But $w(D\phi)(x)=-x^{-d-2}\phi'(-1/x)$ which we showed was smooth
and $w(\phi')(x)=-xw(D\phi)(x)$ hence is also smooth.
\end{proof}

We shall now give another description of $I_{d}$. 
\subsubsection{Asymptotic expansions}

We recall some results from the theory of Asymptotic Expansions. For the proofs see 
(\cite{erd}, \cite{van1}, \cite{van2},\cite{olv}).
Let $\phi:\mathbb{R}\to \mathbb{C}$. Let 
$a_{0},a_{1},\ldots$ be complex numbers.
We say that $\phi$ has the asymptotic expansion
$$
\phi(x)\approx \sum_{m=0}^{\infty}a_{m}x^{-m}
$$
at infinity if $\phi(x)-\sum_{m=0}^{N}a_{m}x^{-m}=O(|x|^{-N-1})$ when
$|x|\to \infty$ for all non negative integers $N$ where the implied constant 
is dependent on $N$. (Notice that we have grouped together $\infty$ and 
$-\infty$ which is not needed in a more general definition.)
It is easy to see that the constants $a_{m}$ are determined uniquely by 
$\phi$ although $\phi$ is not determined uniquely by $\{a_{m}\}$. 
(For example, $\phi(x)+e^{-|x|}$ will have the same asymptotic expansion as
$\phi$.). The following results are well known (\cite{van2}):
\begin{lemma}
Let $N$ be a non negative integer and $c$ be a real constant. 
Then $\phi(x)=(x+c)^{-N}$ has an asymptotic expansion of the form
$$
(x+c)^{-N}\approx x^{-N}-Ncx^{-N-1}+...
$$
\end{lemma}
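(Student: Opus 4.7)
The plan is to obtain the expansion by factoring out $x^{-N}$ and applying the binomial series to the remaining factor. For $|x|>|c|$ (certainly for $|x|$ large) we may write
\[
(x+c)^{-N} \;=\; x^{-N}\bigl(1+c/x\bigr)^{-N},
\]
and the generalized binomial series gives
\[
(1+c/x)^{-N} \;=\; \sum_{k=0}^{\infty}\binom{-N}{k}\bigl(c/x\bigr)^{k}.
\]
Multiplying through by $x^{-N}$ yields a formal series in $x^{-1}$ whose coefficients are $a_{m}=0$ for $m<N$ and $a_{m}=\binom{-N}{m-N}c^{\,m-N}$ for $m\ge N$. The first two nonzero coefficients are $\binom{-N}{0}=1$ and $\binom{-N}{1}=-N$, giving the leading behavior $x^{-N}-Ncx^{-N-1}+\cdots$ asserted in the statement.

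To show this is a genuine asymptotic expansion in the sense defined just above, I need to verify the remainder estimate: for every non-negative integer $M$,
\[
(x+c)^{-N} - \sum_{m=0}^{M}a_{m}x^{-m} \;=\; O(|x|^{-M-1}) \quad\text{as}\;|x|\to\infty.
\]
First I would dispose of the trivial case $M<N$: then the partial sum is $0$ and $(x+c)^{-N}=O(|x|^{-N})=O(|x|^{-M-1})$ immediately. For $M\ge N$, writing $M=N+K$, the remainder equals
\[
x^{-N}\!\left[(1+c/x)^{-N}-\sum_{k=0}^{K}\binom{-N}{k}(c/x)^{k}\right],
\]
so the task reduces to estimating the tail of the binomial series for $(1+u)^{-N}$ at $u=c/x$. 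This is a standard Taylor-with-remainder calculation: for $|u|\le 1/2$ (i.e.\ $|x|\ge 2|c|$) the tail is dominated by $C_{K}|u|^{K+1}$, whence the bracket is $O(|x|^{-K-1})$ and the whole expression is $O(|x|^{-N-K-1})=O(|x|^{-M-1})$.

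There really is no serious obstacle here — the argument is a routine application of the binomial expansion together with a geometric-series tail bound, and the principal ``content'' of the lemma is simply identifying the coefficients $a_{m}$ explicitly as shifted binomial coefficients. The only thing worth stating carefully is the uniformity of the estimate, which is automatic once one restricts to $|x|\ge 2|c|$.
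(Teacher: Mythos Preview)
Your argument is correct and is the standard one. The paper itself does not supply a proof of this lemma; it simply records it as well known, with a reference to \cite{van2}, so there is nothing to compare against beyond noting that your binomial-series-plus-remainder approach is exactly the kind of elementary verification one would expect.
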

\begin{corollary} \label{C:asympt}
If $\phi(x)$ has an asymptotic expansion then $\phi(x+c)$ has an asymptotic
expansion for every real constant $c$.
\end{corollary}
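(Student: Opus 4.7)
The plan is to reduce the corollary directly to the previous lemma by substituting the truncated asymptotic expansion of $\phi$ and then expanding each term $(x+c)^{-m}$ using that lemma. Concretely, if $\phi(x) \approx \sum_{m=0}^{\infty} a_m x^{-m}$, then for every fixed non-negative integer $N$ I can write
\[
\phi(x+c) = \sum_{m=0}^{N} a_m (x+c)^{-m} + R_N(x+c),
\]
where $R_N(y) = O(|y|^{-N-1})$ as $|y|\to\infty$. For $|x|$ large enough we have $|x+c|\geq |x|/2$, so $R_N(x+c) = O(|x|^{-N-1})$.

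Next I invoke the previous lemma, which gives each $(x+c)^{-m}$ an asymptotic expansion of the form $(x+c)^{-m} \approx \sum_{k=0}^{\infty} \binom{-m}{k} c^{k} x^{-m-k}$. Truncating each at order $N$ and inserting produces, after regrouping terms by a common power $x^{-n}$ with $n=m+k$, the candidate expansion
\[
\phi(x+c) \approx \sum_{n=0}^{\infty} a'_n x^{-n}, \qquad
a'_n := \sum_{m=0}^{n} a_m \binom{-m}{n-m} c^{n-m}.
\]
What remains is to verify the defining error estimate: the difference between $\phi(x+c)$ and $\sum_{n=0}^{N} a'_n x^{-n}$ is $O(|x|^{-N-1})$.

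The book-keeping step is the main thing to be careful about, but it is routine once one decides to truncate all expansions at a uniform order $N$. The contributions not captured by $\sum_{n=0}^{N} a'_n x^{-n}$ come from three sources: the remainder $R_N(x+c)$, already handled above; the tail of each lemma-expansion of $(x+c)^{-m}$ for $m\leq N$, which contributes an $O(|x|^{-N-1})$ error by the lemma; and terms of the form $a_m\binom{-m}{k}c^k x^{-m-k}$ with $m+k>N$ but $m\leq N$ and $k\leq N-m$ is not the issue — rather one must note these cross-terms never appear because after grouping the first $N+1$ partial sums are exactly $\sum_{n=0}^{N} a'_n x^{-n}$. Summing the two genuine error contributions gives the required $O(|x|^{-N-1})$ bound, completing the proof. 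Since $N$ was arbitrary, $\phi(x+c)$ indeed possesses an asymptotic expansion at infinity with coefficients $a'_n$ as above.
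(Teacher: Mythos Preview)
Your proof is correct and follows exactly the route the paper intends: the corollary is stated without proof as an immediate consequence of the preceding lemma on the asymptotic expansion of $(x+c)^{-N}$, and your argument simply makes that deduction explicit by truncating $\phi(x+c)$ at order $N$, applying the lemma termwise, and regrouping. The book-keeping paragraph is a little awkwardly worded, but the content is right: once you truncate each $(x+c)^{-m}$ at the term $x^{-(N)}$ (i.e.\ keep $k\le N-m$), the double sum $\sum_{m=0}^{N}\sum_{k=0}^{N-m}$ is exactly $\sum_{n=0}^{N} a'_n x^{-n}$, and the only errors are $R_N(x+c)$ and the lemma's tails, both $O(|x|^{-N-1})$.
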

\begin{proposition}
Assume that $\phi$ is differentiable and both $\phi$ and $\phi'$ have
asymptotic expansions. Then the asymptotic expansion of $\phi'$ is the
derivative term by term of the asymptotic expansion of $\phi$. That is
$$ 
\phi'(x)\approx \sum_{m=0}^{\infty}-ma_{m}x^{-m-1}
$$    
\end{proposition}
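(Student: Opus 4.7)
The plan is to denote the asymptotic expansion of $\phi'$ by $\phi'(x) \approx \sum_{m=0}^{\infty} b_m x^{-m}$ and to pin down the coefficients $b_m$ in terms of the $a_m$ by integrating $\phi'$ from $x$ out to infinity and then invoking the uniqueness of asymptotic-expansion coefficients.

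First I would verify that $b_0 = b_1 = 0$. Since $\phi(x)\to a_0$ as $|x|\to\infty$, the function $\phi$ is bounded near infinity. If $b_0\neq 0$, then $\phi'(x)\to b_0$, forcing $\phi$ to grow linearly, a contradiction. If $b_0=0$ but $b_1\neq 0$, then $\phi'(x) = b_1 x^{-1}+O(x^{-2})$, and integration produces a $\log|x|$ term in $\phi$, again contradicting boundedness.

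Once we know $\phi'(x)=\sum_{m=2}^{N} b_m x^{-m}+O(x^{-N-1})$, the integrand is absolutely integrable on $[x,\infty)$, and term-by-term integration yields
\[
a_0-\phi(x) \;=\; \int_x^{\infty}\phi'(t)\,dt \;=\; \sum_{m=2}^{N}\frac{b_m}{m-1}\,x^{-(m-1)} \;+\; O(x^{-N}).
\]
Comparing this with the given expansion $\phi(x)\approx a_0+\sum_{k\geq 1}a_k x^{-k}$ and using uniqueness of the coefficients in an asymptotic expansion (already noted in the excerpt) gives $a_k = -b_{k+1}/k$ for every $k\geq 1$, i.e.\ $b_{m+1}=-m\,a_m$ for $m\geq 1$. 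Together with $b_0=b_1=0$, this is precisely the claimed identity $\phi'(x)\approx\sum_{m\geq 0}(-m)\,a_m\,x^{-m-1}$. The argument for $x\to-\infty$ is the mirror image, in line with the convention in the excerpt that lumps $+\infty$ and $-\infty$ together.

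The main technical point to justify is the tail-integration step: one must observe that the $O$-bound in the asymptotic expansion of $\phi'$ is in fact uniform on $[x,\infty)$ for all sufficiently large $x$, with implied constant independent of the starting point, so that integrating the remainder $\phi'(t)-\sum b_m t^{-m}$ genuinely produces a remainder of the next order. This is immediate from unpacking the definition of asymptotic expansion, but it is the one place where the reader could be tripped up if the verification were skipped.
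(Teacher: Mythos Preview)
Your argument is correct and is essentially the standard proof of this fact. Note, however, that the paper itself does not supply a proof of this proposition: it is listed among several results from the theory of asymptotic expansions with the blanket remark ``For the proofs see \cite{erd}, \cite{van1}, \cite{van2}, \cite{olv}.'' What you wrote is precisely the integration-and-uniqueness argument one finds in those references (e.g.\ Olver), so there is no meaningful difference in approach to report.

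Two small remarks on presentation. First, in the step ruling out $b_0\neq 0$ you might make explicit that $\phi'(x)=b_0+O(|x|^{-1})$ integrates to $b_0\,x+O(\log|x|)$, which is unbounded; the phrase ``grows linearly'' is correct but hides the $O(\log|x|)$ error that a careful reader will want to see controlled. Second, your closing comment about the tail-integration step is well taken and is indeed the only subtle point: once one writes out that the remainder satisfies $|R_N(t)|\le C|t|^{-N-1}$ for $|t|\ge X_0$ with $C$ independent of $t$, the bound $\int_x^\infty |R_N(t)|\,dt\le (C/N)\,x^{-N}$ is immediate. With those two clarifications the write-up is complete.
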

\begin{theorem} \label{T:asympt}
Assume that $\phi$ is smooth on $\mathbb{R}$ and let $\gamma(x)=\phi(1/x)$.
Then $\phi$ and all its derivatives have asymptotic expansions on $\mathbb{R}$
if and only if $\gamma$ is smooth at $x=0$. (hence if and only if  
$\gamma$ is smooth on $\mathbb{R}$).
\end{theorem}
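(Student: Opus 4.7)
The plan is to treat the two implications separately. For the easy direction, that smoothness of $\gamma$ implies $\phi$ and all its derivatives admit asymptotic expansions, I would invoke Taylor's theorem for $\gamma$ at $0$: $\gamma(t) = \sum_{m=0}^{N} \gamma^{(m)}(0)\, t^m/m! + O(|t|^{N+1})$. Substituting $t = 1/x$ gives immediately the asymptotic expansion $\phi(x) \approx \sum_m \gamma^{(m)}(0)\, x^{-m}/m!$. For higher derivatives I would write $\phi^{(n)}(x) = \sum_{M=1}^{n} c_{n,M}\, x^{-n-M}\, \gamma^{(M)}(1/x)$ via an easy induction on $n$ (equivalently, Fa\`a di Bruno), note that each $\gamma^{(M)}$ is again smooth at $0$ so that $\gamma^{(M)}(1/x)$ admits an asymptotic expansion by the same Taylor argument, and then use that asymptotic expansions are preserved under multiplication by powers of $x^{-1}$ and summation.

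For the converse direction I would write the asymptotic expansion of $\phi$ as $\phi(x) \approx \sum_m a_m x^{-m}$, define $\gamma(0) := a_0$, and aim to prove by induction on $n$ that $\gamma$ is $C^n$ at $0$ with $\gamma^{(n)}(0) = n!\, a_n$. The key device will be the identity
$$
\gamma(t) = \sum_{m=0}^{N} a_m t^m + \epsilon_N(1/t), \qquad \epsilon_N(x) := \phi(x) - \sum_{m=0}^{N} a_m x^{-m},
$$
valid for any $N$. Iterating the preceding proposition on term-by-term differentiation of asymptotic expansions will then yield the uniform estimate $\epsilon_N^{(k)}(x) = O(|x|^{-N-k-1})$ as $|x| \to \infty$ for every $k \ge 0$.

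Differentiating the identity $n$ times, the polynomial part contributes $\sum_{m=n}^{N} \tfrac{m!}{(m-n)!}\, a_m\, t^{m-n}$, and the remainder is $(d/dt)^n \epsilon_N(1/t)$. Applying Fa\`a di Bruno to $\epsilon_N \circ (1/t)$ will express this remainder as $\sum_{M=1}^{n} c_{n,M}\, t^{-n-M}\, \epsilon_N^{(M)}(1/t)$, which by the decay estimate above is $O(|t|^{N+1-n})$. Taking $N = n+1$ then produces $\gamma^{(n)}(t) = n!\, a_n + (n+1)!\, a_{n+1}\, t + O(|t|^2)$ as $t \to 0$, so $\gamma^{(n)}$ extends continuously to $0$, and the difference quotient of the inductively established $C^{n-1}$ function $\gamma^{(n-1)}$ at $0$ confirms that this continuous extension is indeed $\gamma^{(n)}(0) = n!\, a_n$. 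The main obstacle will be purely combinatorial: carrying out the Fa\`a di Bruno bookkeeping and verifying that the polynomial-in-$1/t$ blow-up produced by the chain rule is exactly compensated, term by term, by the extra decay $|x|^{-M}$ acquired by each differentiation of $\epsilon_N$.
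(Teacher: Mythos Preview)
Your argument is correct. The easy direction (smoothness of $\gamma$ at $0$ implies asymptotic expansions for $\phi$ and its derivatives via Taylor) is exactly what the paper does.

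For the converse, the paper does not give a self-contained argument: it simply cites van der Corput, pointing to an estimate on $\gamma^{(m)}(x)$ near $0$ obtained there. Your proposal supplies precisely that estimate from scratch. You use the hypothesis that \emph{all} derivatives of $\phi$ admit asymptotic expansions together with the preceding term-by-term differentiation proposition to obtain $\epsilon_N^{(k)}(x)=O(|x|^{-N-1-k})$, and then the Fa\`a di Bruno bookkeeping converts this into $\gamma^{(n)}(t)=n!\,a_n+O(|t|)$ for $t\neq 0$; the mean-value/difference-quotient step then upgrades the inductive $C^{n-1}$ regularity of $\gamma$ at $0$ to $C^{n}$. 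This is the same mechanism the cited reference uses, so your route is not genuinely different from the paper's---you have simply unpacked the citation. The payoff is that your write-up is self-contained and makes transparent exactly where the hypothesis on the derivatives of $\phi$ (not just $\phi$ itself) is consumed.
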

\begin{proof}
If $\gamma$ is smooth at $x=0$ then the asymptotic expansions of $\phi$
and all its derivative follow from the Taylor expansion for $\gamma$ and its
derivatives. In the other direction, the smoothness of $\gamma$ at $x=0$ 
follows from the proof of (Theorem 3, \cite{van2}). There it is proved from 
an estimate on $\gamma^{(m)}(x)$. (See \cite{van2} (14) with $l=0$.)
\end{proof}
\begin{proposition}
The space  $I_{d}$ is the space of smooth functions $\phi$ such that $\phi$
and all its derivatives have asymptotic expansions and such that the asymptotic
expansion for $\phi$ is of the form
\begin{equation} \label{E:dasympt}
\phi(x)\approx a_{d+1}x^{-d-1}+\ldots
\end{equation}
\end{proposition}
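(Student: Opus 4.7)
The plan is to prove the characterization in both directions, using Theorem~\ref{T:asympt} to translate between smoothness at the origin of $\gamma(x):=\phi(-1/x)$ and the existence of asymptotic expansions of $\phi$ and its derivatives at infinity.

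For the forward direction, assume $\phi\in I_d$. Then $\phi$ is smooth on $\mathbb{R}$ and, by definition, $w(\phi)(x)=x^{-d-1}\gamma(x)$ is smooth on $\mathbb{R}$. Multiplying by $x^{d+1}$ shows that $\gamma(x)=x^{d+1}w(\phi)(x)$ is smooth at $0$, hence smooth on all of $\mathbb{R}$. After the reflection $x\mapsto -x$, this is equivalent to $\phi(1/x)$ being smooth at $0$, so Theorem~\ref{T:asympt} delivers the asymptotic expansions of $\phi$ and all its derivatives. To pin down the leading exponent, I would write the Taylor expansion $w(\phi)(x)=\sum_{k=0}^{N}b_k x^k+O(|x|^{N+1})$ at $0$, multiply by $x^{d+1}$, and substitute $y=-1/x$ to obtain $\phi(y)=\sum_{k=0}^{N}(-1)^{d+1+k}b_k\,y^{-d-1-k}+O(|y|^{-d-N-2})$, which is exactly the shape (\ref{E:dasympt}).

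For the reverse direction, assume $\phi$ is smooth, all its derivatives admit asymptotic expansions, and $\phi(y)\approx a_{d+1}y^{-d-1}+\cdots$. Theorem~\ref{T:asympt} makes $\phi(1/x)$, and therefore $\gamma(x)=\phi(-1/x)$, smooth at $0$. The hypothesis on the leading exponent translates, via $y=-1/x$, into $\gamma(x)=O(|x|^{d+1})$ as $x\to 0$; since $\gamma$ is smooth, this forces $\gamma(0)=\gamma'(0)=\cdots=\gamma^{(d)}(0)=0$. By the Hadamard (smooth division) lemma, $\gamma(x)=x^{d+1}h(x)$ for some smooth $h$ on $\mathbb{R}$, so $w(\phi)(x)=x^{-d-1}\gamma(x)=h(x)$ extends smoothly to all of $\mathbb{R}$, placing $\phi$ in $I_d$.

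The main technical point to watch is the translation between the asymptotic behavior at infinity and vanishing at zero: one must verify that if a smooth function $\gamma$ satisfies $\gamma(x)=O(|x|^{d+1})$ as $x\to 0$, then its Taylor expansion at $0$ begins with the $x^{d+1}$ term, so that the Hadamard factorization applies. Everything else is a routine change of variables combined with Theorem~\ref{T:asympt}.
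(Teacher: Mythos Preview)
Your proof is correct and uses the same central tool as the paper, namely Theorem~\ref{T:asympt}. The forward direction is essentially identical. In the reverse direction, however, the paper takes a shorter path: instead of applying Theorem~\ref{T:asympt} to $\phi$ itself and then invoking the Hadamard division lemma to peel off the factor $x^{d+1}$ from $\gamma(x)=\phi(-1/x)$, the paper applies Theorem~\ref{T:asympt} directly to $\alpha(x)=x^{d+1}\phi(x)$. Since $\phi$ and all its derivatives have asymptotic expansions beginning at $x^{-d-1}$, the function $\alpha$ is smooth and, together with all its derivatives, has asymptotic expansions (in nonnegative powers of $x^{-1}$); then Theorem~\ref{T:asympt} gives immediately that $(w\phi)(x)=\alpha(-1/x)$ is smooth at $0$. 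Your route via Hadamard is perfectly valid and makes the vanishing of the low-order Taylor coefficients explicit, but the paper's choice of auxiliary function absorbs that step into the hypothesis of Theorem~\ref{T:asympt} and so dispenses with the division lemma altogether.
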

\begin{proof}
Let $\phi\in I_{d}$ and let $\gamma=w\phi$. Since $\gamma$ is smooth and
since $\phi(x)=x^{-d-1}\gamma(-1/x)$ is smooth, it follows from Theorem~\ref{T:asympt} that 
$\phi$ has the required
asymptotic expansion. Now assume that $\phi$ (and its derivatives)
has the asymptotic exapnsion (\ref{E:dasympt}). Then $\alpha(x)=x^{d+1}\phi(x)$ is smooth and has
an asymptotic exapansion. It follows from Theorem~\ref{T:asympt} that 
$\gamma(x)=\alpha(-1/x)=x^{-d-1}\phi(-1/x)=(w\phi)(x)$ is smooth hence $\phi \in I_{d}$.
\end{proof} 
We now define a representation $\pi_{d}$ of $G$ on  $I_{d}$
in the following way: Let $\phi\in I_{d}$.
\begin{align}
(w\phi)(x)&=(i)^{d+1}\mathcal{W}_{d}(\phi)(x)=x^{-d-1}\phi(-1/x)\\
(n(y)\phi)(x)&=\phi(x+y)\\
(s(z)\phi)(x)&=z^{-d-1}\phi(z^{-2}x)
\end{align}
The fact that $\phi(x+y)\in I_{d}$ follows from Corollary~\ref{C:asympt}.
It is easy to see that these maps give isomorphisms of $I_{d}$. 
More generally we have
\begin{equation} \label{E:SLaction}
\left(\pi_{d}\begin{pmatrix}
p & q \\
r & s \\
\end{pmatrix}\phi \right)(x)=(rx+p)^{-d-1}\phi(\frac{sx+q}{rx+p})
\end{equation}
where the matrix above is in $G$. It is easy to check that this gives a representation
of $G$, that is $\pi_{d}(g_{1}g_{2})=\pi_{k}(g_{1})\pi_{d}(g_{2})$ for all $g_{1},g_{2}\in G$.

We shall now define an ``intertwining operator'' $M_{d}$ on 
$I_{d}$ and compute its
image. This operator is the inverse of the operator $T_{d}$ and will take us back
to the ``Kirillov model''. For $\phi \in I_{d}$ we define
$$
M_{d}(\phi)(y)=|y|^{(-d+1)/2}\hat{\phi}(y)=|y|^{(-d+1)/2}(2\pi)^{-1/2}\int_{-\infty}^{\infty}\phi(x)e^{-ixy}dx.
$$
We view $M_{d}(\phi)(y)$ as a function on $\mathbb{R}-\{0\}$. Its behaviour at zero
is central to the description of this mapping. 
Let $\tilde{S}_{d}=M_{d}(I_{d})$. 
\begin{lemma} \label{L:S}
Let $f\in \tilde{S}_{d}$. Then $f$ is differentiable for every $y\not=0$.
The operator $D=y(d/dy)$ maps $\tilde{S}_{d}$ into itself.
\end{lemma}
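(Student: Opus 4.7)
The plan is to unpack the definition $f = M_d(\phi) = |y|^{(-d+1)/2}\hat\phi(y)$ for some $\phi \in I_d$, prove differentiability of $\hat\phi$ on $\mathbb{R}\setminus\{0\}$ by a single integration by parts that shifts a derivative onto $\phi$, and then identify $Df$ explicitly as a linear combination of $f$ and $M_d(D\phi)$. Since Lemma~\ref{L:diff} already gives $D\phi \in I_d$, the $D$-invariance of $\tilde{S}_d$ will follow immediately.

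First I will establish enough decay. For $\phi \in I_d$ with $d \ge 1$, the asymptotic expansion $\phi(x) \approx a_{d+1}x^{-d-1} + \ldots$ (and the term-by-term differentiated expansions for $\phi^{(k)}$) show that $\phi$ and all its derivatives lie in $L^1(\mathbb{R}) \cap L^\infty(\mathbb{R})$, and moreover $x\phi'(x) = O(|x|^{-d-1})$ lies in $L^1$. Integration by parts (boundary terms vanish since $\phi(x) \to 0$) gives $\widehat{\phi'}(y) = iy\,\hat\phi(y)$, so for $y\neq 0$,
\begin{equation*}
\hat\phi(y) = \frac{1}{iy}\,\widehat{\phi'}(y).
\end{equation*}
Because $x\phi'(x)$ is integrable, differentiation under the integral yields $\widehat{\phi'}{}'(y) = -i\,\widehat{x\phi'}(y) = -i\,\widehat{D\phi}(y)$, which is in particular continuous. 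Combined with the previous display, this shows $\hat\phi$ is differentiable on $\mathbb{R}\setminus\{0\}$; multiplying by the smooth-away-from-zero factor $|y|^{(-d+1)/2}$ then gives differentiability of $f$ at every $y \neq 0$.

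Next I will compute $Df$. Differentiating the displayed identity for $\hat\phi(y)$ and multiplying by $y$, the cancellation I expect is
\begin{equation*}
y\,\hat\phi'(y) = -\hat\phi(y) - \widehat{D\phi}(y),
\end{equation*}
and applying the product rule to $f = |y|^{(-d+1)/2}\hat\phi$ produces
\begin{equation*}
Df(y) = \tfrac{-d+1}{2}\,f(y) + |y|^{(-d+1)/2}\, y\,\hat\phi'(y).
\end{equation*}
Substituting the first identity into the second and recognising $|y|^{(-d+1)/2}\widehat{D\phi}(y) = M_d(D\phi)(y)$ collapses the right-hand side to
\begin{equation*}
Df = -\tfrac{d+1}{2}\,f - M_d(D\phi).
\end{equation*}

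Finally I conclude. By Lemma~\ref{L:diff}, $D\phi \in I_d$, so $M_d(D\phi) \in \tilde{S}_d$ by definition. Since $\tilde{S}_d = M_d(I_d)$ is a linear subspace, the displayed formula shows $Df \in \tilde{S}_d$, completing the proof. The main delicate point is the differentiability step: for small $d$ (notably $d = 1$), $x\phi(x)$ itself is not integrable, so one cannot simply differentiate under the integral in the definition of $\hat\phi$. The remedy is to move one derivative onto $\phi$ via integration by parts before any differentiation in $y$, which is valid because $\phi$ decays and gives an integrable $x\phi'(x)$.
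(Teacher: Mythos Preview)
Your argument is correct and in fact slightly cleaner than the paper's. The paper proves differentiability of $\hat\phi$ for $d>1$ by the direct route---observing that $x\phi(x)\in L^{1}$ so one may differentiate under the integral---and then explicitly \emph{defers} the case $d=1$ to a later subtraction trick: write $\phi=\alpha+a_{2}(1+x^{2})^{-1}$ so that $x\alpha(x)\in L^{1}$, and handle the explicit piece $(1+x^{2})^{-1}$ by its known Fourier transform $e^{-|y|}$. Your single integration by parts, rewriting $\hat\phi(y)=(iy)^{-1}\widehat{\phi'}(y)$ and then differentiating (legitimately, since $x\phi'(x)=O(|x|^{-d-1})\in L^{1}$ already for $d\ge 1$), sidesteps that case split entirely and treats all $d\ge 1$ uniformly. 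The subsequent computation $y\hat\phi'(y)=-\hat\phi(y)-\widehat{D\phi}(y)$ and the identification $Df=-\tfrac{d+1}{2}f-M_{d}(D\phi)$ agree with what the paper obtains, and the conclusion via Lemma~\ref{L:diff} is the same. In short: same algebraic core, but your preliminary integration by parts buys you a uniform proof where the paper pays with a separate $d=1$ argument.
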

\begin{proof}
We first assume that $d>1$. The case $d=1$ will be proved later. Let $\phi \in I_{d}$. Then
$x\phi(x)\in L^{1}(\mathbb{R})$ hence $\hat{\phi}$ is differentiable. 
We have $(D\phi)^{\wedge}(y)=-\hat{\phi}(y)-D\hat{\phi}(y)$
and 
$$
M_{d}(D(\phi))(y)=|y|^{(-d+1)/2}(D\phi)^{\wedge}(y)=
- |y|^{(-d+1)/2}\hat{\phi}(y)-|y|^{(-d+1)/2}D\hat{\phi}(y).
$$
Since $D\phi\in I_{d}$ it follows that $|y|^{(-d+1)/2}D(\hat{\phi})(y)\in \tilde{S}_{d}$.
Now $D(|y|^{(-d+1)/2}\hat{\phi}(y))=\frac{-d+1}{2}|y|^{(-d+1)/2}\hat{\phi}(y)+|y|^{(-d+1)/2}D\hat{\phi}(y)$
Since both summands belong to $\tilde{S}_{d}$ we get our conclusion.
\end{proof}
\begin{corollary}
If $f\in \tilde{S}_{d}$ then $f$ is smooth at every $y\not =0$. 
\end{corollary}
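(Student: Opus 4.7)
The plan is to combine the two conclusions of Lemma~\ref{L:S} (invariance under $D$, and differentiability of any element at every $y\neq 0$) and iterate. Since $D$ preserves $\tilde{S}_d$, induction gives $D^k f \in \tilde{S}_d$ for every nonnegative integer $k$; by the first part of the lemma, each such $D^k f$ is then differentiable, and in particular continuous, at every $y\neq 0$.

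Next I would exploit the identity $d/dy = y^{-1}D$, which is valid on $\mathbb{R}\setminus\{0\}$. From $f'(y) = y^{-1}(Df)(y)$ and the continuity of $Df$ away from zero, $f'$ is continuous on $\mathbb{R}\setminus\{0\}$. For higher derivatives I would argue by induction: if I have shown that $f^{(n-1)}(y) = \sum_{k=0}^{n-1} g_k(y)\,(D^k f)(y)$ with each $g_k$ smooth on $\mathbb{R}\setminus\{0\}$ (in fact a polynomial in $y^{-1}$), then differentiating term by term, using the product rule and $(D^k f)'(y) = y^{-1}(D^{k+1}f)(y)$, expresses $f^{(n)}$ as a similar sum running up to $k=n$. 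Since each $D^k f \in \tilde{S}_d$ is continuous on $\mathbb{R}\setminus\{0\}$, this shows $f^{(n)}$ exists and is continuous there, for every $n$.

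Equivalently, one can appeal directly to the classical identity $y^n (d/dy)^n = D(D-1)(D-2)\cdots(D-n+1)$, writing $f^{(n)}(y) = y^{-n} P_n(D)f(y)$ for a polynomial $P_n$ of degree $n$; continuity of $f^{(n)}$ at $y\neq 0$ then follows at once from the continuity of each $D^k f$ on $\mathbb{R}\setminus\{0\}$. I do not anticipate a real obstacle, since the essential work is already carried out in Lemma~\ref{L:S}; the only new ingredient is the invertibility of multiplication by $y$ on $\mathbb{R}\setminus\{0\}$, which allows the Euler operator $D$ to be traded for the ordinary derivative.
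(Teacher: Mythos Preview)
Your proposal is correct and follows essentially the same route as the paper: both arguments iterate Lemma~\ref{L:S} to get $D^{m}f\in\tilde S_{d}$ (hence differentiable) for all $m$, and then use the algebraic relation between $D^{m}$ and $y^{m}(d/dy)^{m}$ to upgrade this to smoothness of $f$ away from $0$. The only cosmetic difference is the direction of the algebra---the paper writes $D^{m}f = y^{m}f^{(m)} + (\text{lower order terms})$ and solves for $f^{(m)}$, whereas you invert directly via $f^{(n)} = y^{-n}P_{n}(D)f$; these are the same computation.
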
 
\begin{proof}
We prove by induction on $m$ that $f$ is $m$ times differentiable. For $m=1$ this
follows from the Lemma. Also by the Lemma $D^{m}f\in \tilde{S}_{d}$ hence it
is differentiable.
But  $D^{m}f(y)$ equals $y^{m}f^{(m)}(y)$ plus a sum involving lower order derivatives
of $f$. Since these summands are all differentiable from the induction assumption,
it follows that $y^{m}f^{(m)}(y)$ is differentiable hence $f^{(m)}(y)$ is differentiable for
$y\not=0$.
\end{proof}
\begin{lemma} \label{L:schwartz}
Let $f\in \tilde{S}_{d}$. Then $f$ is a Schwartz function, that is, $f$ and all its
derivatives are rapidly decreasing at $\infty$ and $-\infty$.
\end{lemma}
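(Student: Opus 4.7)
The plan is to combine the Fourier-analytic description of $\tilde S_{d}$ with the already-established fact from Lemma \ref{L:S} that $D$ stabilizes $\tilde S_{d}$, and then bootstrap from a rapid-decay statement for $f$ itself up to all derivatives of $f$.

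First I would prove the base case: every $g\in\tilde S_{d}$ is rapidly decreasing at $\pm\infty$. Write $g=M_{d}(\psi)$ with $\psi\in I_{d}$, so that $g(y)=|y|^{(-d+1)/2}\hat\psi(y)$. Since $\psi\in I_{d}$, the asymptotic expansion $\psi(x)\approx a_{d+1}x^{-d-1}+\ldots$ together with Theorem~\ref{T:asympt} applied to every derivative gives $\psi^{(k)}(x)=O(|x|^{-d-1-k})$ as $|x|\to\infty$ for every $k\ge 0$. Because $\psi^{(k)}$ is smooth on $\mathbb{R}$ and $d\ge 1$, these estimates are strong enough that $\psi^{(k)}\in L^{1}(\mathbb{R})$ for every $k\ge 0$. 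The standard identity $(iy)^{k}\hat\psi(y)=\widehat{\psi^{(k)}}(y)$ combined with Riemann--Lebesgue then forces $\hat\psi$ to be rapidly decreasing at $\pm\infty$. Since the prefactor $|y|^{(-d+1)/2}$ is bounded on $\{|y|\ge 1\}$ (as $d\ge 1$ makes the exponent $\le 0$), the product $g(y)$ is rapidly decreasing at $\pm\infty$.

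Next I would leverage Lemma~\ref{L:S}, which shows that $D=y(d/dy)$ maps $\tilde S_{d}$ into itself. Iterating gives $D^{m}f\in\tilde S_{d}$ for every $m\ge 0$, and by the base case each $D^{m}f$ is itself rapidly decreasing at $\pm\infty$. A direct unwinding of $D^{m}$ expresses
\begin{equation*}
D^{m}f(y)=y^{m}f^{(m)}(y)+\sum_{j=0}^{m-1}c_{m,j}\,y^{j}f^{(j)}(y)
\end{equation*}
for explicit combinatorial constants $c_{m,j}$. I would run induction on $m$: assuming $y^{j}f^{(j)}$ is rapidly decreasing at $\pm\infty$ for all $j<m$, subtracting the lower-order sum from $D^{m}f$ shows that $y^{m}f^{(m)}(y)$ is rapidly decreasing, and hence on $|y|\ge 1$ so is $f^{(m)}(y)$ itself. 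Combined with the corollary above (which already gives smoothness of $f$ on $\mathbb R\setminus\{0\}$), this yields the desired Schwartz decay of $f$ and all its derivatives.

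The main obstacle is the base case, namely verifying that $\hat\psi$ is rapidly decreasing when $\psi\in I_{d}$. This is where the full force of the asymptotic-expansion characterization of $I_{d}$ is needed: without knowing that every $\psi^{(k)}$ decays like $|x|^{-d-1-k}$, one cannot integrate by parts indefinitely in the Fourier integral. After that, the passage to higher derivatives is purely formal, using the stability of $\tilde S_{d}$ under $D$ established in Lemma~\ref{L:S}.
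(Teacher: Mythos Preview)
Your proposal is correct and follows essentially the same approach as the paper's proof: first show $\hat\psi$ is rapidly decreasing using that $\psi$ and all its derivatives are in $L^{1}(\mathbb{R})$, then bootstrap to all derivatives of $f$ via the stability of $\tilde S_{d}$ under $D$ from Lemma~\ref{L:S} and an induction on the order of the derivative. You supply more detail than the paper (the explicit asymptotic decay $\psi^{(k)}(x)=O(|x|^{-d-1-k})$ and the combinatorial expansion of $D^{m}f$), but the argument is the same.
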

\begin{proof}
Let  $\phi \in I_{d}$ be such that $f=M_{d}(\phi)$. Since $\phi$ and all its derivatives are
in $L^{1}(\mathbb{R})$ it follows that $\hat{\phi}$ is rapidly decreasing hence
$f(y)=|y|^{(-d+1)/2}\hat{\phi}(y)$ is rapidly decreasing. Since $D(f)\in \tilde{S}_{d}$
it follows that $D(f)$ is rapidly decreasing hence $f'$ is rapidly decreasing.
Using induction and the fact that $D^{m}(f)$ is rapidly decreasing we get that
$f^{(m)}$ is rapidly decreasing.
\end{proof}
We shall now describe the behaviour at zero of the functions in our space $\tilde{S}_{d}$. We will
also complete the case $d=1$ that was left out in the proof of Lemma~\ref{L:S}.
We let
$$
\phi_{0}(x)=(1+x^{2})^{(-d-1)/2}
$$
and $\phi_{j}(x)=\phi_{0}^{(j)}$, the $j$th derivative of $\phi_{0}$. It is easy to check that 
$\phi_{0}\in I_{d}$ hence by
Lemma~\ref{L:diff},  $\phi_{i}\in I_{d}$ for every positive integer $i$. We shall compute
the functions $f_{j}=(\hat{\phi}_{j})$. From (\cite{grad} 3.771 (2)) it follows that
\begin{align}
f_{0}(y)&=\frac{2^{(d+1)/2}}{\Gamma((d-1)/2)} |y|^{d/2}K_{d/2} (|y|)\\
f_{j}(y)&=(-i)^{j}y^{j}f_{0}(y)=\frac{2^{(d+1)/2}}{\Gamma((d-1)/2)} (-i)^{j}y^{j} |y|^{1/2}K_{d/2} (|y|)
\end{align}
\begin{lemma} \label{L:properties}
$f_{i}(y)$ are smooth on $y\not=0$. They are smooth on the right and smooth
on the left at $y=0$.  $f_{0}(y)$ is continuous at $y=0$. $f_{j}(y)$ is $j$ times
differentiable at $x=0$ and satisfies $f_{j}^{(r)}(0)=0$ for $r=0,1,\ldots,j-1$ and
$f_{j}^{(j)}(0)\not =0$
\end{lemma}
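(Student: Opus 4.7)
The plan is to deduce every claim directly from the explicit closed-form expressions $f_{0}(y) = c\,|y|^{d/2}K_{d/2}(|y|)$ and $f_{j}(y) = (-i)^{j}y^{j}f_{0}(y)$ given in the excerpt, together with standard properties of the modified Bessel function $K_{\nu}$. Since $K_{\nu}$ is smooth (indeed real-analytic) on $(0,\infty)$, the function $|y|^{d/2}K_{d/2}(|y|)$ is smooth on $\mathbb{R}\setminus\{0\}$, and multiplying by $y^{j}$ preserves smoothness, so every $f_{i}$ is smooth on $y\ne 0$.

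For smoothness from each side at $0$, I would invoke the elementary closed form for half-integer order. Writing $d=2m+1$ (the case in which $\phi_{0}\in I_{d}$, as one checks from the definition of $\mathcal{W}_{d}$), one has
$$K_{m+1/2}(z) = \sqrt{\pi/(2z)}\,e^{-z}\sum_{k=0}^{m}\frac{(m+k)!}{k!(m-k)!(2z)^{k}}.$$
Multiplying by $z^{d/2}$ clears the negative powers of $z$ and yields $z^{d/2}K_{d/2}(z) = \sqrt{\pi/2}\,e^{-z}P_{m}(z)$, where $P_{m}$ is a polynomial of degree $m$ with $P_{m}(0) = (2m)!/(m!\,2^{m})\ne 0$. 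Hence $f_{0}(y) = h(|y|)$ for an $h$ smooth on all of $\mathbb{R}$, so the restriction of $f_{0}$ (and therefore of $f_{j}$) to $[0,\infty)$ or to $(-\infty,0]$ is the restriction of a $C^{\infty}$ function. Since $f_{0}$ depends only on $|y|$, it is even and continuous at $0$ with $f_{0}(0) = h(0) \ne 0$.

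For the assertions about $f_{j}$ at $0$, I would apply Leibniz's rule separately on each side. On $y>0$,
$$f_{j}^{(r)}(y) = (-i)^{j}\sum_{\ell=0}^{r}\binom{r}{\ell}(y^{j})^{(\ell)}h^{(r-\ell)}(y),$$
and since $(y^{j})^{(\ell)}(0)=0$ unless $\ell=j$ (in which case it equals $j!$), for $r<j$ every summand vanishes while for $r=j$ only $\ell=j$ survives, yielding $(-i)^{j}j!\,h(0)\ne 0$. The parallel computation on $y<0$, where $f_{j}(y) = (-i)^{j}y^{j}h(-y)$, produces a chain-rule factor $(-1)^{r-\ell}$ and gives $f_{j}^{(r)}(0^{-}) = (-1)^{r-j}f_{j}^{(r)}(0^{+})$; thus for $r<j$ both one-sided derivatives are $0$, and for $r=j$ the factor $(-1)^{0}=1$ forces them to coincide. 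The main obstacle is precisely this last point: although $f_{0}$ is smooth from each side at $0$, its one-sided derivatives disagree because $f_{0}$ depends on $|y|$, so without the factor $y^{j}$ the two-sided derivative of $f_{0}$ would already fail at order $1$. The factor $y^{j}$ annihilates exactly the contributions in Leibniz's rule that involve the mismatched one-sided values $h^{(r-\ell)}(0)$ for $\ell<j$, which is why $f_{j}$ is \emph{exactly} $j$-times differentiable at $0$ with the stated vanishing pattern.
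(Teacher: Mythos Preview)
Your argument is correct and is essentially the paper's own proof: both reduce to the explicit form $f_{0}(y)=e^{-|y|}\cdot(\text{polynomial in }|y|)$---the paper citing Gradshteyn--Ryzhik 3.737(1), you the half-integer closed form for $K_{m+1/2}$---and then deduce the behaviour of $f_{j}=(-i)^{j}y^{j}f_{0}$ at $0$ from that of $f_{0}$. Your explicit Leibniz bookkeeping matching the one-sided derivatives, and your remark that this closed form (and indeed the membership $\phi_{0}\in I_{d}$) requires $d$ odd, are more careful than the paper, which states the differentiability of $y^{j}f_{0}$ from ``$f_{0}$ smooth for $y\neq 0$ and continuous at $0$'' and leaves the parity of $d$ implicit.
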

\begin{proof}
The first part of the lemma follows from the fact that $f_{0}(y)$ is a linear conbination
of functions of the form $|y|^{t}e^{-|y|}$ where $t$ is a non-negative integer. This follows
from the fact that $f_{0}$ is even and is given by (\cite{grad} 3.737 (1)).
Since $f_{j}$ is a multiple of $f_{0}$ by a constant times an integer power of $y$ the
smoothness of $f_{j}$ also follows. The continuity of $f_{0}$ follows from the same 
reason or from the fact that it is a Fourier transform of an $L^{1}$ function. Since 
$f_{0}$ is smooth for $y\not=0$ and continuous at $y=0$ it follows that
the function $y^{j}f_{0}(y)$ is $j$ times differentiable and that the $m$th derivative
vanihes at $y=0$ when $m<j$. Since $f_{0}(0)\not=0$ as it is an integral of a positive function, 
it follows that $f_{j}^{(j)}(0)\not= 0$.
\end{proof}
{\bf Proof of Lemma~\ref{L:S} for $d=1$}

We need to prove that
$M_{1}(\phi)(y)$ is differentiable at $y\not=0$. The rest of the proof will follow
the same lines as in the proof of Lemma~\ref{L:S}. Let $\phi \in I_{1}$. Then $\phi$ has
an asymptotic expansion of the form $\phi(x)\approx a_{2}x^{-2}+\ldots$  
Let $\alpha(x)=\phi(x)-a_{2}\phi_{0}(x)=\phi(x)-a_{2}(1+x^2)^{-1}$. 
Then $\alpha(x)$ and $x\alpha(x)$ are in $L^{1}$ hence $\hat{\alpha}$ is differentiable.
Now $\hat{\phi}(y)=\hat{\alpha}(y)+a_{2}\hat{\phi}_{0}(y)=\hat{\alpha}(y)+2a_{2}e^{-|y|}$
is differentiable. The same argument will be used in the proof of the following corollary.
\begin{corollary} \label{C:zero}
Let $f\in \tilde{S}_{d}$.  Then $|y|^{-d+1)/2}f(y)$ is smooth on the right and on the 
left at $y=0$.
\end{corollary}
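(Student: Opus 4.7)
The claim unpacks to saying that $\hat\phi(y) = |y|^{(d-1)/2}f(y)$ is smooth on the right and on the left at $y=0$, where $\phi \in I_d$ is chosen so that $f = M_d(\phi)$. My plan is to follow the strategy already used for $d=1$ at the end of the proof of Lemma~\ref{L:S}: peel off the leading asymptotic terms of $\phi$ by subtracting from it a finite linear combination of the explicit test functions $\phi_j = \phi_0^{(j)}$, leaving a remainder whose Fourier transform is differentiable to any prescribed order on all of $\mathbb{R}$. Lemma~\ref{L:properties} already guarantees that the $f_j = \hat\phi_j$ themselves are smooth from each side at $y=0$, so the two ingredients combine to give the conclusion.

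In detail, fix $f = M_d(\phi) \in \tilde S_d$ and an integer $N \ge 0$. Since $\phi \in I_d$, its asymptotic expansion has the form $\phi(x) \approx a_{d+1}x^{-d-1} + a_{d+2}x^{-d-2} + \cdots$ as $|x|\to\infty$. A direct expansion of $\phi_0(x) = (1+x^2)^{-(d+1)/2}$ followed by differentiation shows that each $\phi_j$ has an asymptotic expansion whose leading term is a nonzero multiple of $x^{-d-1-j}$; this triangularity lets me solve successively for constants $c_0, c_1, \ldots, c_N$ such that
\[
\alpha(x) \;=\; \phi(x) - \sum_{j=0}^{N} c_j \phi_j(x) \;\in\; I_d
\]
has asymptotic expansion beginning only at order $x^{-d-2-N}$, so in particular $\alpha(x) = O(|x|^{-d-2-N})$ as $|x|\to\infty$.

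It follows that $x^k\alpha(x) \in L^1(\mathbb{R})$ for every $0 \le k \le d+N$, so by the standard Fourier argument $\hat\alpha$ is $C^{d+N}$ on all of $\mathbb{R}$, in particular at $y=0$. Combined with Lemma~\ref{L:properties}, the decomposition
\[
\hat\phi(y) \;=\; \hat\alpha(y) + \sum_{j=0}^{N} c_j f_j(y)
\]
exhibits $\hat\phi$ as $(d+N)$-times continuously differentiable from each side of $y=0$. Since $N$ was arbitrary, $\hat\phi$ is smooth from each side of $y=0$, which is the asserted property of $|y|^{(d-1)/2}f(y)$.

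The only step that is not a routine Fourier-analytic manipulation is the triangularity claim for the asymptotic expansions of the $\phi_j$, i.e., verifying that the coefficient of $x^{-d-1-j}$ in the expansion of $\phi_j$ is nonzero (explicitly, a multiple of $(d+1)(d+2)\cdots(d+j)$). This reduces to an explicit binomial expansion of $(1+x^2)^{-(d+1)/2}$ at infinity and repeated differentiation, and presents no serious difficulty; every other step uses only properties already established in the excerpt.
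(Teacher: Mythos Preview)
Your proof is correct and follows essentially the same approach as the paper's: write $\phi$ as a finite linear combination of the $\phi_j$ plus a remainder $\alpha$ with improved decay, so that $\hat\alpha$ is differentiable to any prescribed order on all of $\mathbb{R}$, then invoke Lemma~\ref{L:properties} for the $f_j$ and let the truncation parameter tend to infinity. The only difference is cosmetic (your $N$ is the paper's $m$, with a slight indexing shift), and you spell out the triangularity of the leading asymptotic coefficients of the $\phi_j$ a bit more explicitly than the paper does.
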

\begin{proof}
There  exist $\phi\in I_{d}$ such that $|y|^{(d-1)/2}f(y)=\hat{\phi}(y)$ .  
We need to show that $\hat{\phi}$ is smooth on the left and on the right at $y=0$. 
For each integer $m>0$ we can write
$\phi=\alpha_{m}+\sum_{j=0}^{m}a_{j+d-1}\phi_{j}$ so that $\alpha_{m}$ satisfies that 
$x^{r}\alpha_{m}(x)\in L^{1}(\mathbb{R})$
for $r=0,1,\ldots,m+d-1$.  It follows that $\hat{\alpha_{m}}$ is $m+d-1$ times differentiable
on $\mathbb{R}$. But $\hat{\phi}=\hat{\alpha}_{m}+\sum_{j=0}^{m}a_{j+d-1}f_{j}$ and
since $f_{j}$ are smooth on the right and on the left at $y=0$ it follows that
$\hat{\phi}$ is $m+d-1$ smooth on the right and on the left at $y=0$. Since this
is true for all $m$ we get the conclusion.
\end{proof}
\begin{lemma} \label{L:zero}
Let $f\in \tilde{S}_{d}$.  Then $|y|^{(d-1)/2}f(y)$ has continuous $d-1$ derivatives
at $y=0$.
\end{lemma}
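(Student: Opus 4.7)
The plan is to strengthen the decomposition already used in the proof of Corollary~\ref{C:zero}. Write $f = M_d(\phi)$ with $\phi \in I_d$, so that $|y|^{(d-1)/2} f(y) = \hat\phi(y)$; the task reduces to showing $\hat\phi$ is $(d-1)$-times continuously differentiable at $y = 0$. By Corollary~\ref{C:zero} we already know $\hat\phi$ is smooth separately on the left and on the right at $y=0$, so what must be verified is that its left and right derivatives up through order $d-1$ actually match.

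The key new observation is that the model function $\phi_0(x) = (1+x^2)^{-(d+1)/2}$ decays fast enough at infinity that $x^r \phi_0(x) \in L^1(\mathbb R)$ for every $r = 0, 1, \ldots, d-1$: indeed $|\phi_0(x)| \sim |x|^{-(d+1)}$, so $|x^r \phi_0(x)| \sim |x|^{r-d-1}$, which is integrable at infinity precisely when $r \leq d-1$. By the standard Fourier analytic fact that $L^1$ moments of $\phi_0$ produce continuous derivatives of its Fourier transform, it follows that $f_0 = \hat{\phi}_0 \in C^{d-1}(\mathbb R)$. Since $\phi_j = \phi_0^{(j)}$, we have $\hat\phi_j(y) = (iy)^j f_0(y)$, and multiplication by a polynomial preserves $C^{d-1}(\mathbb R)$, so each $f_j$ lies in $C^{d-1}(\mathbb R)$ \emph{globally} --- upgrading the one-sided smoothness at $0$ recorded in Lemma~\ref{L:properties} to genuine two-sided regularity up to order $d-1$.

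With this in hand I would reuse the same decomposition $\phi = \alpha_m + \sum_{j=0}^m c_j \phi_j$ as in the proof of Corollary~\ref{C:zero}, taking $m = 0$: choose $c_0$ so as to cancel the leading term of the asymptotic expansion of $\phi$. Then $\alpha_0$ has an asymptotic expansion beginning at order $x^{-(d+2)}$, hence $x^r \alpha_0 \in L^1(\mathbb R)$ for all $r \leq d$, so $\hat\alpha_0 \in C^d(\mathbb R) \subseteq C^{d-1}(\mathbb R)$. Adding gives $\hat\phi = \hat\alpha_0 + c_0 f_0 \in C^{d-1}(\mathbb R)$, which yields the desired conclusion. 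The main obstacle --- once one recognizes the right $L^1$ estimate on $\phi_0$ --- is just the bookkeeping with the asymptotic expansion needed to pin down the decay of $\alpha_0$; the rest is routine Fourier analysis.
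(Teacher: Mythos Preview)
Your argument is correct, and its core is the same Fourier-analytic fact the paper uses: if $x^r\psi\in L^1(\mathbb R)$ for $r=0,\ldots,d-1$ then $\hat\psi\in C^{d-1}(\mathbb R)$. The difference is that you route this through the decomposition $\phi=\alpha_0+c_0\phi_0$ inherited from Corollary~\ref{C:zero}, checking the moment condition separately for $\phi_0$ and for the remainder $\alpha_0$. The paper simply observes that $\phi$ \emph{itself} already has this property: by the asymptotic characterization of $I_d$, any $\phi\in I_d$ satisfies $\phi(x)=O(|x|^{-(d+1)})$ as $|x|\to\infty$, so $x^r\phi(x)\in L^1(\mathbb R)$ for $r=0,\ldots,d-1$, and $\hat\phi\in C^{d-1}$ follows immediately --- no decomposition needed. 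Your detour is harmless but unnecessary; once you have noted (as you did for $\phi_0$) that decay like $|x|^{-(d+1)}$ gives $d-1$ integrable moments, you should see that this applies verbatim to every $\phi\in I_d$.
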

\begin{proof}
There  exist $\phi\in I_{d}$ such that $|y|^{d-1/2}f(y)=\hat{\phi}(y)$. From the 
asymptotics of $\phi$ it follows that $\phi(x),x\phi(x),\ldots,x^{d-1}\phi(x)$ are all in 
$L^{1}(\mathbb{R})$. The conclusion follows.
\end{proof}
We will show that Lemma~\ref{L:schwartz}, Corollary~\ref{C:zero} and Lemma~\ref{L:zero} give a complete 
description of $\tilde{S}_{d}$. To do that we define the inverse map $T_{d}$ by $
T_{d}(f)= (|y|^{(d-1)/2}f)^{\lor}$.
That is,
$$
T_{d}(f)(z)=(2\pi)^{-1/2}\int_{-\infty}^{\infty}|y|^{(d-1)/2}f(y)e^{i yz}dy
$$
Notice that if $f$ vanishes on $(-\infty,0)$ then this is the same as $T_{d}$
that was defined in (\ref{E:Tmu}).
\begin{proposition} \label{P:in}
Let $f$ be such that $|y|^{(-d-1)/2}f(y)$ is smooth on $[0,\infty)$ and on $(-\infty,0]$
(but not necessarily smooth at $y=0$)
and such that $f$ and all its derivatives are rapidly decreasing. Then 
$f\in \tilde{S}_{d}$.
\end{proposition}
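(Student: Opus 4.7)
The plan is to reduce the two-sided hypothesis to the one-sided case already handled by Proposition~\ref{P:map}. Decompose $f = F_+ + F_-$ where $F_\pm(y) := f(y)\mathbf{1}_{\pm y \geq 0}$ (both viewed as functions on $\mathbb{R}$, extended by zero off the relevant half-line). I would show each summand lies in $\tilde{S}_d$ and conclude by linearity, using that $I_d$ and hence $\tilde{S}_d$ is a vector space.

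For $F_+$, I would first verify that its restriction to $[0,\infty)$ lies in $S_d([0,\infty))$. Setting $f_1(y) := y^{-(d+1)/2}F_+(y)$ on $[0,\infty)$, the hypothesis gives that $f_1$ extends smoothly to $y=0$ from the right, and a routine Leibniz-rule computation using the Schwartz behaviour of $f$ shows $f_1 \in S([0,\infty))$. Hence $F_+(y) = y^{1/2+d/2}f_1(y) \in S_d([0,\infty))$, and Proposition~\ref{P:map} yields $\phi_+ := T_d(F_+) \in I_d$. Unwinding the definitions gives $\hat{\phi}_+(y) = y^{(d-1)/2}F_+(y)\mathbf{1}_{y>0}$, so
\[
M_d(\phi_+)(y) = |y|^{(1-d)/2}\hat{\phi}_+(y) = F_+(y) \qquad \text{for } y \neq 0,
\]
the factors $|y|^{(1-d)/2}$ and $y^{(d-1)/2}$ cancelling for $y>0$ and both sides vanishing for $y<0$. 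Therefore $F_+ \in \tilde{S}_d$.

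For $F_-$, I would exploit a reflection symmetry of $\tilde{S}_d$ under $y \mapsto -y$. This symmetry comes from the analogous symmetry of $I_d$: for $\phi \in I_d$, setting $\phi^-(x) := \phi(-x)$, the function $\phi^-$ is smooth and
\[
w(\phi^-)(x) = x^{-d-1}\phi(1/x) = (-1)^{d+1}w(\phi)(-x),
\]
which is smooth because $w(\phi)$ is. A short Fourier calculation gives $M_d(\phi^-)(y) = M_d(\phi)(-y)$, so $\tilde{S}_d$ is closed under $y \mapsto -y$. Applying this to $\tilde{F}_-(y) := F_-(-y) = f(-y)\mathbf{1}_{y \geq 0}$, which on $[0,\infty)$ satisfies exactly the hypothesis treated for $F_+$ (with $f$ replaced by $y \mapsto f(-y)$), the previous paragraph yields $\tilde{F}_- \in \tilde{S}_d$ and the symmetry then gives $F_- \in \tilde{S}_d$. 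Combining, $f = F_+ + F_- \in \tilde{S}_d$.

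The main obstacle is organizational rather than analytic. One must verify cleanly that (i) $M_d \circ T_d$ acts as the identity on each one-sided piece despite the singular factor $|y|^{(1-d)/2}$ at the origin, and (ii) $\tilde{S}_d$ is invariant under reflection. Both are direct from the definitions once the $|y|^{(1-d)/2}$ and $y^{(d-1)/2}$ factors are handled carefully; the substantive ingredient is Proposition~\ref{P:map}, applied separately on each side of $y=0$ after noting that the hypothesis on $f$ decouples into independent conditions on the two half-lines.
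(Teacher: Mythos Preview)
Your proof is correct and follows essentially the same approach as the paper. The paper also splits $f=f_{+}+f_{-}$ into its restrictions to the two half-lines, applies Proposition~\ref{P:map} to get $T_{d}(f_{+})\in I_{d}$, and then says ``similar arguments as in the proof of Proposition~\ref{P:map}'' handle $f_{-}$; your explicit reflection-symmetry argument for $F_{-}$ is a clean way to make that step precise.
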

\begin{proof}
It is enough to show that  $T_{d}(f)\in I_{d}$ since $f=M_{d}(T_{d}(f))$ hence is
in $\tilde{S}_{d}$. To do that we write $f=f_{+}+f_{-}$ where $f_{+}$ vanishes on
$(0,-\infty)$ and is smooth on $[0,\infty)$ and $f_{-}$ vanishes on $(0,\infty)$
and is smooth on $(-\infty,0]$. (Notice that $f(0)=0$. ) Now 
$f_{+}\in S_{d}([0,\infty))$ and by Proposition~\ref{P:map} $T_{d}(f_{+})\in I_{d}$.
Similar arguments as in the proof of  Proposition~\ref{P:map}  will show that 
$T_{d}(f_{-})\in I_{d}$ hence $T_{d}(f)\in I_{d}$.
\end{proof}
\begin{theorem} \label{T:desc}
$\tilde{S}_{d}$ is the set of smooth functions $f$ on $\mathbb{R}-\{0\}$ such
that $f$ and all its derivatives are rapidly decreasing at $\pm\infty $ and such that
$|y|^{(d-1)/2}f(y)$ is smooth on the right and on the left
at $y=0$ and has $d-1$ continuous derivatives at $y=0$. 
(When $d=1$, this means that $|y|^{d-1}f(y)=f(y)$ is continuous.)
\end{theorem}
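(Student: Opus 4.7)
The forward direction---that every $f \in \tilde{S}_d$ satisfies the listed conditions---is essentially already assembled from the preceding results: smoothness of $f$ on $\mathbb{R}\setminus\{0\}$ follows from the corollary to Lemma~\ref{L:S}, rapid decrease of $f$ and all its derivatives at $\pm\infty$ from Lemma~\ref{L:schwartz}, the one-sided smoothness of $|y|^{(d-1)/2}f(y)$ at $y=0$ from Corollary~\ref{C:zero}, and the $d-1$ continuous derivatives at $y=0$ from Lemma~\ref{L:zero}. So only the reverse inclusion requires real work.

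Given $f$ satisfying the three conditions, set $g(y) := |y|^{(d-1)/2}f(y)$; the hypotheses on $f$ translate to saying that $g$ is Schwartz on $\mathbb{R}$, smooth on each of $[0,\infty)$ and $(-\infty,0]$, and of class $C^{d-1}$ globally. Since $M_d \circ T_d = \mathrm{Id}$ by Fourier inversion, it suffices to show $T_d(f) \in I_d$, for then $f = M_d(T_d(f)) \in \tilde{S}_d$. Using the characterization of $I_d$ as the space of smooth functions whose derivatives all have asymptotic expansions with $\phi(x) \approx a_{d+1}x^{-d-1} + \ldots$, the task reduces to checking (a) that $T_d(f)$ is smooth on $\mathbb{R}$ and (b) that $T_d(f)$ together with each of its derivatives admits an asymptotic expansion at $\pm\infty$, with the expansion of $T_d(f)$ itself starting at $z^{-d-1}$.

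Part (a) is standard: since $y^n g(y) \in L^1(\mathbb{R})$ for every $n$, its inverse Fourier transform $T_d(f)$ is smooth. For (b), I would split $T_d(f)(z) = (2\pi)^{-1/2}\int_{\mathbb{R}} g(y) e^{iyz}\,dy$ into integrals over $[0,\infty)$ and $(-\infty,0]$ and integrate by parts $N$ times on each half-line. The boundary contributions at $y=0$ collect into a finite sum whose $j$-th term is proportional to $(iz)^{-j-1}[g^{(j)}(0-) - g^{(j)}(0+)]$, while the residual is $O(|z|^{-N})$ thanks to absolute integrability of $g^{(N)}$ on each half-line. The $C^{d-1}$ matching at $y=0$ annihilates the boundary terms for $j \le d-1$, so the surviving leading contribution is of order $z^{-d-1}$; letting $N$ vary yields the full asymptotic expansion. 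The expansions of the derivatives $(d/dz)^k T_d(f)$ follow from the same argument applied to $(iy)^k g(y)$, which acquires regularity $C^{d-1+k}$ at $0$ (multiplication by $y^k$ raises the order of vanishing there), producing an expansion starting at $z^{-d-1-k}$, consistent with termwise differentiation of the expansion for $T_d(f)$.

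The main technical burden is the careful bookkeeping of the boundary contributions at $y=0$ and the uniform $O(|z|^{-N})$ estimate on the residual integrals; both depend on combining the one-sided smoothness of $g$ with its Schwartz decay. Once these estimates are in place, the characterization of $I_d$ gives $T_d(f) \in I_d$, and the inclusion $f \in \tilde{S}_d$ follows.
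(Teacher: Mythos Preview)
Your forward direction matches the paper's exactly. For the reverse inclusion, however, you take a genuinely different route. The paper argues by subtraction: using the explicit functions $f_j=\widehat{\phi_0^{(j)}}$ with $\phi_0(x)=(1+x^2)^{-(d+1)/2}$ and the triangular system of one-sided derivatives at $y=0$ recorded in Lemma~\ref{L:properties}, it chooses constants $c_0,\ldots,c_{d-1}$ so that $\tilde h=|y|^{(d-1)/2}f-\sum c_j\tilde f_j$ vanishes to order $d$ at the origin; the resulting $h=|y|^{-(d-1)/2}\tilde h$ then meets the stronger hypothesis of Proposition~\ref{P:in}, whose proof (via Proposition~\ref{P:map} and hence Theorem~\ref{T:main}) passes through the Hankel transform and the Weber integral. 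You instead extract the asymptotic expansion of $T_d(f)$ at $\pm\infty$ directly by integrating $\int g(y)e^{iyz}\,dy$ by parts on each half-line, so that the jump terms $(iz)^{-j-1}[g^{(j)}(0^-)-g^{(j)}(0^+)]$ vanish for $j\le d-1$ precisely by the $C^{d-1}$ hypothesis, forcing the expansion to start at $z^{-d-1}$. This is more elementary and self-contained --- it sidesteps both the special functions $f_j$ and the Bessel-function machinery, and in fact yields an independent proof of Proposition~\ref{P:map} --- while the paper's approach keeps the argument inside its unifying Hankel-transform framework. One small slip: $g$ is not literally Schwartz on $\mathbb{R}$, only $C^{d-1}$ at the origin and smooth on each closed half-line with rapidly decreasing one-sided derivatives; but those are exactly the properties your integration by parts actually uses, so the argument stands.
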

\begin{proof}
By Lemma~\ref{L:schwartz}, Corollary~\ref{C:zero} and Lemma~\ref{L:zero} $\tilde{S}_{d}$ is contained 
in the space of functions satisfying the above conditions. Now assume that $f$ is as 
above and we will show that it is in $\tilde{S}_{d}$, that is, it is the image under 
$M_{d}$ of a function in $I_{d}$.  Let $\tilde{f}= |y|^{(d-1)/2}f(y)$. 
By the properties of $\{f_{k}\}$ in Lemma~\ref{L:properties} we can find using
a triangulation argument constants $c_{0},\ldots, c_{d-1}$ so that
$$
\tilde{h}=\tilde{f}-\sum_{j=0}^{d-1}c_{j}\tilde{f}_{j}
$$
vanishes at zero and all its first $d-1$ derivatives vanish at zero. It follows that
$h(y)=|y|^{(-d+1)/2}\tilde{h}$ satisfies the conditions of Proposition~\ref{P:in} hence is
in $\tilde{S}_{d}$. Hence $f=h+\sum_{j=0}^{d-1}c_{j}f_{j}$ is in $\tilde{S}_{d}$. 
\end{proof}
Since $I_{d}$ is isomorphic to $\tilde{S}_{d}$ under the isomorphism $M_{d}$
it follows that the action of $G$ on $I_{d}$ induces an action of $G$ on 
$\tilde{S}_{d}$.
Let $I_d^{+}$ be the subspace of $I_{d}$ which is given by 
\begin{equation} \label{E:Idplus}
I_d^{+}=T_{d}(S_{d}([0,\infty)).
\end{equation} 
By Corollary~\ref{C:stab} $\mathcal{H}_{d}$ preserves the space
$S_{d}([0,\infty))$. Hence by Theorem~\ref{T:desc}, $w$ preserves the space $I_d^{+}$.
It is easy to see (see (\ref{E:action}) that the induced action of $n(y)$ and $s(z)$ on 
$\tilde{S}_{d}$ stabilize the space $S_{d}([0,\infty))$. Hence it follows that $n(y)$ and $s(z)$
stabilize the space $I_d^{+}$.Since $w$, $n(y)$
and $s(z)$  generate $G$ (when $y$ and $z$ vary)  we get the following corollary:
\begin{corollary} \label{C:inv}
$I_d^{+}$ is a $G$ invariant subspace of $I_{d}$.
\end{corollary}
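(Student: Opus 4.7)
The plan is to verify $G$-invariance of $I_d^+ = T_d(S_d([0,\infty)))$ on a generating set. Since $SL(2,\mathbb{R})$ is generated by the elements $w$, $n(y)$ for $y \in \mathbb{R}$, and $s(z)$ for $z \in \mathbb{R}^*$ (a standard consequence of the Bruhat decomposition $G = B \cup BwB$ with $B = NA$), it is enough to check that each of these three families maps $I_d^+$ into itself.

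For the Weyl element the argument is essentially a reorganization of what has already been proved. The action of $w$ on $I_d$ is a scalar multiple of $\mathcal{W}_d$, so given $\phi = T_d(f)$ with $f \in S_d([0,\infty))$, Theorem~\ref{T:main} yields
$$
\mathcal{W}_d(T_d f) = T_d(\mathcal{H}_d f),
$$
and Corollary~\ref{C:stab} gives $\mathcal{H}_d f \in S_d([0,\infty))$. Hence $w \phi \in T_d(S_d([0,\infty))) = I_d^+$.

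For $n(y)$ and $s(z)$, the plan is to compute the conjugated action on the Kirillov side directly from the integral formula (\ref{E:Tmu}). A routine change of variable gives
$$
n(y)T_d(f) = T_d\!\bigl(u \mapsto f(u) e^{iyu}\bigr), \qquad s(z)T_d(f) = \mathrm{sgn}(z)^{d+1}\, T_d\!\bigl(u \mapsto f(z^2 u)\bigr).
$$
Writing $f(u) = u^{1/2 + d/2} f_1(u)$ with $f_1 \in S([0,\infty))$, the functions $f(u)e^{iyu}$ and $f(z^2 u)$ equal $u^{1/2+d/2}$ times $f_1(u)e^{iyu}$ and $|z|^{d+1} f_1(z^2 u)$ respectively, and both second factors are manifestly in $S([0,\infty))$ (the Schwartz class is closed under multiplication by $e^{iyu}$ and under positive dilation of the argument). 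Hence both transformed arguments of $T_d$ lie in $S_d([0,\infty))$, so their images lie in $I_d^+$.

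The only mildly delicate point is the bookkeeping of signs and powers of $z$ in the $s(z)$ computation, in particular producing the clean factor $\mathrm{sgn}(z)^{d+1}$ after cancelling $z^{-d-1} \cdot z^2 \cdot |z|^{d-1}$. Everything else is immediate from Theorem~\ref{T:main}, Corollary~\ref{C:stab}, and the elementary stability properties of the Schwartz class.
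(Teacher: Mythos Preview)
Your proof is correct and follows essentially the same approach as the paper: check invariance on the generating set $\{w, n(y), s(z)\}$, handle $w$ via Theorem~\ref{T:main} and Corollary~\ref{C:stab}, and handle $n(y)$, $s(z)$ by computing the transported action on the Kirillov side and observing it preserves $S_d([0,\infty))$. The paper states the $n(y)$ and $s(z)$ step more tersely by pointing to the action formulas (which are the ones you derived), but the content is the same.
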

Let
\begin{equation} \label{E:minus}
S_{d}((-\infty,0])=
\{f:(-\infty,0],\to \mathbb{C}\; |\; f(x)=x^{(d+1)/2}f_{1}(x) 
\;\text{and} \;f_{1}\in S((-\infty,0])\}
\end{equation}
For $y\in (-\infty,0]$ and $f\in S_{d}((-\infty,0])$ we define
$$
\mathcal{H}_{d}(f)(y)=
\int_{-\infty}^{0}f(x)\sqrt{|xy|}J_{d}(2\sqrt{|xy|})\frac{dx}{|x|}.
$$
It is clear that $S_{d}((-\infty,0])$ is a subspace of $\tilde{S}_{d}$.  
It follows from Corollary~\ref{C:stab} that 
$\mathcal{H}_{d}$ stabilizes $S_{d}((-\infty,0])$ and that the induced action of 
$n(y)$ and $s(z)$ given by
\begin{equation} \label{L:minusaction}
(n(y)f)(x)=e^{iyx}f(x),\;\;\;(s(z)f)(x)=f(z^{2}x)
\end{equation}
also stabilize
$S_{d}((-\infty,0])$.
Define
\begin{equation} \label{E:Idminus}
I_d^{-}=T_{d}(S_{d}((-\infty,0])).
\end{equation}
Then $I_d^{-}$ is a $G$ invariant subspace of $I_{d}$. We will show in Section~\ref{S:frechet} that $I_{d}^{\pm}$ is
a closed subspace under the Frechet topology on $I_{d}$ hence (from the theory of $(\mathfrak{g},K)$ modules)
$I_{d}^{+}\oplus I_{d}^{-}$ is the unique
maximal closed invariant subspace of $I_{d}$.

\section{The Kirillov Model of the discrete series}

The Kirillov model is a particular realization of the representations of
$SL(2,\mathbb{R})$ or $GL(2,\mathbb{R})$ (or more generally, representations
of $GL(n,F)$ where $F$ is a local field) with a prescribed action of the
Borel subgroup. In this section we will describe the Kirillov model of the
discrete series representations of $G=SL(2,\mathbb{R})$. Our main theorem
of this section and this paper is a description of the smooth space of the Kirillov model.
This Theorem will be proved in the next section. 

\subsection{The action of the group in the Kirillov model}
Let $d$ be a positive integer. Using the map $T_{d}$ and its inverse
$M_{d}$ we can move the action of $G$ from the space $I_{d}$ to the space
$S_{d}$. We shall denote this action by $R_d^{+}$. That is, 
if $g\in G$ then the 
action of $g$ on $f \in S_{d}$ is given
by 
$$
R_{d}^{+}(g)(f)=M_{d}(gT_{d}(f)).
$$
Thus we obtain the following formulas. For each  
$f\in S_{d}([0,\infty))$ we have
\begin{align} \label{E:Lieaction}
&(R_{d}^{+}(n(y))f)(x)=e^{iyx}f(x)\\
&(R_d^{+}(s(z))f)(x)=f(z^{2}x)\\
&(R_{d}^{+}(w)f)(x)=i^{-(d+1)}\mathcal{H}_{d}(f)(x)
\end{align}
Let 
$$
X=\begin{pmatrix}
0 & 1 \\
0 & 0 \\
\end{pmatrix},\;\;\;
H=\begin{pmatrix}
1 & 0 \\
0 & -1 \\
\end{pmatrix},\;\;\;
Y=\begin{pmatrix}
0 & 0 \\
1 & 0 \\
\end{pmatrix}
$$
be a basis for the Lie Algebra. The action of the Lie Algebra on the 
Kirillov model above is given by
\begin{align} \label{E:action}
&(R_{d}^{+}(X)f)(x)=ixf(x)\\
&(R_d^{+}(H)f)(x)=2xf'(x)\\
&(R_{d}^{+}(Y)f)(x)=-i\frac{d^2-1}{4x}f(x)+ixf''(x)
\end{align}
Another (non-isomorphic) action can be obtained by considering the
action of $G$ on $S_{d}((-\infty,0])$ (see (\ref{E:minus}) and the remark below it) 
and moving this action
to $S_{d}([0,\infty))$. The action is given by
\begin{align}
&(R_{d}^{-}(n(y))f)(x)=e^{-iyx}f(x)\\
&(R_d^{-}(s(z))f)(x)=f(z^{2}x)\\
&(R_{d}^{-}(w)f)(x)=i^{d+1}\mathcal{H}_{d}(f)(x)
\end{align}
It is easy to see that the action of $n(y)$ and $s(z)$ extend to 
the space $L^{2}\left( (0,\infty),dx/x \right)$. It follows from Corollary~\ref{C:isometry}
that the action
of $w$ also extends to $L^{2}\left((0,\infty),dx/x \right)$. We denote again by 
$R_{d}^{\pm}$ the representation on the space 
$\mathcal{H}_{d}=L^{2}\left((0,\infty),dx/x\right)$ obtained in such a way.
This is called the Kirillov Hilbert representation of $G$. 
\begin{proposition}
$R_{d}^{\pm}$ is a strongly continous unitary representation on 
$L^{2}\left((0,\infty),dx/x \right)$. 
\end{proposition}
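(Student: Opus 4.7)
My plan is to reduce both assertions to the three generator families $\{n(y)\}$, $\{s(z)\}$, and the single element $w$, which topologically generate $G = SL(2,\mathbb{R})$, and then propagate.

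\textbf{Unitarity.} On $L^2((0,\infty), dx/x)$, each generator acts as an isometry: $R_d^{\pm}(n(y))$ is multiplication by the unimodular function $e^{\pm i y x}$; $R_d^{\pm}(s(z))$ is the dilation $f(x) \mapsto f(z^2 x)$, which is an isometry because $dx/x$ is dilation-invariant (substitute $u = z^2 x$); and $R_d^{\pm}(w) = i^{\mp(d+1)} \mathcal{H}_d$ is an isometry by Corollary~\ref{C:isometry}. Since $R_d^{\pm}$ is a homomorphism and these elements generate $G$ as a group, $R_d^{\pm}(g)$ is unitary for every $g \in G$.

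\textbf{Strong continuity on generators.} I would verify continuity family-by-family on the dense subspace $S_d([0,\infty)) \subset L^2((0,\infty), dx/x)$. Density is standard: for instance, the span of $\{x^{(d+1)/2} e^{-x} L_n^{(d)}(x)\}$ (Laguerre polynomials) lies in $S_d$ and is $L^2$-dense. For $y_n \to y_0$, the dominated convergence theorem applied to the integrand $|e^{\pm i y_n x} - e^{\pm i y_0 x}|^2 |f(x)|^2 / x$, whose pointwise limit is $0$ and which is majorized by $4|f(x)|^2/x \in L^1$, gives $R_d^{\pm}(n(y_n)) f \to R_d^{\pm}(n(y_0)) f$. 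For $z_n \to z_0$, the substitution $x = e^t$ converts $L^2((0,\infty), dx/x)$ into $L^2(\mathbb{R})$ and transforms $R_d^{\pm}(s(z))$ into a translation; continuity of translation on $L^2(\mathbb{R})$ then applies. Continuity at the single element $w$ is automatic. A standard $\varepsilon/3$ argument, using the density above together with the uniform bound $\|R_d^{\pm}(g)\|_{op} = 1$ from unitarity, extends these continuity statements to all $f \in L^2((0,\infty), dx/x)$.

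\textbf{Propagation to all of $G$.} The main obstacle is upgrading continuity on three generator families to continuity on the whole group. For this I would use the identity $\|R_d^{\pm}(g) f - R_d^{\pm}(g_0) f\| = \|R_d^{\pm}(g_0^{-1} g) f - f\|$, valid by unitarity, which reduces the problem to continuity at the identity $e$. Near $e$ the exponential map provides the parametrization $g = \exp(tX)\exp(sH)\exp(uY)$ with $\exp(tX) = n(t)$, $\exp(sH) = s(e^{s})$, and (using $Y^2 = 0$) $\exp(uY) = w\, n(-u)\, w^{-1}$. Continuity at $e$ of each of these three one-parameter subgroups has already been established in the previous step, and continuity of the representation on a product $g_n h_n \to g_0 h_0$ follows by writing $R_d^{\pm}(g_n h_n) - R_d^{\pm}(g_0 h_0) = R_d^{\pm}(g_n)(R_d^{\pm}(h_n) - R_d^{\pm}(h_0)) + (R_d^{\pm}(g_n) - R_d^{\pm}(g_0))R_d^{\pm}(h_0)$ and using unitarity on the first term. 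This yields strong continuity on a neighborhood of $e$, and hence on all of $G$.
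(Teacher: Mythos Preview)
Your argument is correct. Both unitarity and strong continuity are handled soundly: the generator-by-generator verification of unitarity is fine, the dominated convergence and translation arguments for the one-parameter subgroups $n(y)$ and $s(z)$ are standard and correct, the identity $\exp(uY)=w\,n(-u)\,w^{-1}$ (with $Y^2=0$) is valid, and the iterated product estimate using $\|R(g_n)\|_{op}=1$ legitimately yields continuity on a neighborhood of $e$, hence everywhere by unitarity.

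The paper's own proof is a one-line sketch: it invokes a general result from Knapp to reduce the question to checking continuity of $g\mapsto R_d^{\pm}(g)f$ at the single group element $g=w$, verifies this directly for characteristic functions of intervals via the explicit generator formulas, and then approximates. Your route is different in emphasis: rather than appealing to an external reduction lemma and working at $w$, you reduce to continuity at $e$ by unitarity and then supply a self-contained argument via the local chart $(t,s,u)\mapsto \exp(tX)\exp(sH)\exp(uY)$. What your approach buys is independence from the Knapp citation and a cleaner treatment of the lower-unipotent direction through conjugation by $w$; what the paper's approach buys is brevity, at the cost of leaving the mechanism of the reduction to the reference. Substantively the two arguments are close cousins, and yours is the more explicit of the two.
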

\begin{proof}
It is enough to show that the map $g\to R_{d}^{\pm}(g)f$ is continuous
at $g=w$ for every $f\in L^{2}\left((0,\infty),dx/x\right)$.(\cite{kna} p.11 ) This is easy 
to show for a characteristic function of an interval using the formulas in 
(\ref{E:action}) and by approximation for a general function.
\end{proof}
\begin{proposition}
$R_{d}^{\pm}$ is an irreducible representation.
\end{proposition}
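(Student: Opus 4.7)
The plan is to prove irreducibility using only the action of the Borel subgroup $B = NA$, which is already rich enough; the Weyl element $w$ need not enter. Suppose $V \subset L^{2}((0,\infty), dx/x)$ is a nonzero closed $G$-invariant subspace; I aim to show $V$ is the full space.

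First I would exploit the $N$-invariance. The operators $R_{d}^{\pm}(n(y))$ act by multiplication by $e^{\pm i y x}$, so for any $f \in V$ and any $\phi \in L^{1}(\mathbb{R})$ the Bochner integral
$$
\int_{\mathbb{R}} \phi(y)\, R_{d}^{\pm}(n(y)) f \, dy
$$
converges in $L^{2}((0,\infty), dx/x)$, lies in $V$, and equals the pointwise product of $f$ with a Fourier transform of $\phi$ evaluated at $\mp x$. Since $\{\hat{\phi} : \phi \in L^{1}(\mathbb{R})\}$ contains the Schwartz class and is therefore uniformly dense in $C_{0}(\mathbb{R})$, and $V$ is $L^{2}$-closed, a dominated-convergence approximation (with pointwise dominant $|f|$) shows that $V$ is stable under multiplication by every bounded measurable function on $(0,\infty)$. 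Setting $E_{f} = \{x : f(x) \neq 0\}$, this gives $V \supset L^{2}(E_{f}, dx/x)$; taking a countable $L^{2}$-dense subset of $V$ and unioning the resulting supports produces a single measurable $E \subset (0,\infty)$ with $V = L^{2}(E, dx/x)$.

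Next I would use the $A$-invariance. Since $R_{d}^{\pm}(s(z))$ acts by $f(x) \mapsto f(z^{2}x)$, stability of $L^{2}(E, dx/x)$ under $A$ forces $E$ to be invariant mod null sets under every dilation $x \mapsto \lambda x$ with $\lambda > 0$. The substitution $u = \log x$ carries $((0,\infty), dx/x)$ isometrically to $(\mathbb{R}, du)$ and turns dilation into translation, so the image of $E$ in $\mathbb{R}$ is translation-invariant mod null. A measurable subset of $\mathbb{R}$ with this property is null or conull (convolving its characteristic function with a smooth bump gives a continuous translation-invariant function, which must be constant). Since $V \neq 0$, $E$ is conull and $V = L^{2}((0,\infty), dx/x)$.

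The principal technical point is the density step showing that the closure of the $N$-orbit of a single vector $f$ spans all of $L^{2}(E_{f}, dx/x)$; once this is in hand, the remainder is elementary. The argument uses only $N$ and $A$, so it in fact proves the stronger statement that $B = NA$ already acts irreducibly on the Kirillov Hilbert space, and the Weyl element $w$ plays no role.
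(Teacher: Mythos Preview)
Your approach is exactly the paper's: its entire proof is the one-line remark that the representation is already irreducible (the paper writes ``reducible'', an evident typo) when restricted to the Borel subgroup, with a citation to Knapp's Proposition~2.6, and you have simply written out that standard Mackey-type argument in full. One small step worth making explicit is the passage from $L^{\infty}$-stability of $V$ to $L^{2}(E_{f},dx/x) \subset V$, since $h/f$ need not be bounded for arbitrary $h \in L^{2}(E_{f})$; the routine fix is to note that $\chi_{\{1/n<|f|<n\}} = \bigl(\chi_{\{1/n<|f|<n\}}/f\bigr)\cdot f \in V$ and then approximate.
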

\begin{proof}
This representation is already reducible when restricted to the Borel subgroup.
(See \cite{kna} proof of Proposition 2.6)
\end{proof}
Our main theorem of this section is the following.
\begin{theorem} \label{T:smooth}
The space of smooth vectors of $\mathcal{H}_{d}=L^{2}\left((0,\infty),dx/x\right)$
under the action of $R_{d}^{\pm}$ is $S_{d}\left([0,\infty)\right)$. That is, 
$\mathcal{H}_{d}^{\infty}=S_{d}\left([0,\infty)\right)$.
\end{theorem}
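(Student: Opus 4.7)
The plan is to prove both inclusions of Theorem~\ref{T:smooth} directly from the Lie-algebra action~(\ref{E:action}). For $S_d([0,\infty)) \subseteq \mathcal{H}_d^{\infty}$, I would check that each of $R_d^{\pm}(X)$, $R_d^{\pm}(H)$, $R_d^{\pm}(Y)$ preserves $S_d$. Writing $f(x) = x^{(d+1)/2}f_1(x)$ with $f_1 \in S([0,\infty))$, the only nontrivial case is $Y$, where a direct computation gives
\[
R_d^{+}(Y)f(x) \;=\; x^{(d+1)/2}\bigl[i(d+1)f_1'(x) + ix f_1''(x)\bigr],
\]
the apparent singularity $-i(d^2-1)/(4x)f(x)$ being exactly cancelled by the leading part of $ixf''(x)$ at $x=0$. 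Iterating gives $U(\mathfrak{g}) \cdot S_d \subseteq S_d \subseteq L^2((0,\infty), dx/x)$; since $R_d^{\pm}$ is unitary, the standard criterion (a vector is smooth iff every enveloping-algebra translate lies in the Hilbert space) places $S_d$ inside $\mathcal{H}_d^{\infty}$.

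For the reverse inclusion, let $f \in \mathcal{H}_d^{\infty}$ and pass to $y = \log x$, $g(y) = f(e^y)$; then $L^2((0,\infty), dx/x) \cong L^2(\mathbb{R}, dy)$ and the generators become $R_d^{+}(X) \leftrightarrow ie^y$, $R_d^{+}(H) \leftrightarrow 2\,d/dy$, and $R_d^{+}(Y) \leftrightarrow ie^{-y}L$, where $L = (d/dy - (d+1)/2)(d/dy - (1-d)/2)$. Applying $H^n$ and $X^n H^m$ to $f$ shows $g^{(m)}, e^{ny}g^{(m)} \in L^2(\mathbb{R}, dy)$ for all $n, m$, and Sobolev embedding then yields $g \in C^{\infty}(\mathbb{R})$ together with rapid decay of $g$ and all its derivatives at $+\infty$. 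This delivers smoothness of $f$ on $(0,\infty)$ and rapid decay of $f$ and all its derivatives as $x \to \infty$.

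The crux is the behaviour at $y \to -\infty$ (equivalently $x \to 0^{+}$). Iterating the factorization of $L$ one finds $R_d^{+}(Y^n) \leftrightarrow i^n e^{-ny}\,\tilde P_n \tilde Q_n$, where $\tilde P_n = \prod_{k=0}^{n-1}(d/dy - (d+1)/2 - k)$ and $\tilde Q_n = \prod_{k=0}^{n-1}(d/dy - (1-d)/2 - k)$ are commuting constant-coefficient differential operators. A direct induction gives the key identity $\tilde P_n\bigl[e^{(d+1)y/2}\varphi(e^y)\bigr] = e^{((d+1)/2 + n)y}\varphi^{(n)}(e^y)$, so smoothness at $x = 0$ of $f_1(x) := x^{-(d+1)/2}f(x)$ is equivalent to the existence of $\lim_{y \to -\infty} e^{-((d+1)/2 + n)y}\tilde P_n g(y)$ for every $n$. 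From $R_d^{+}(Y^n)f \in L^2$ we obtain $\tilde P_n \tilde Q_n g \in e^{ny}L^2$; treating this as a constant-coefficient ODE whose homogeneous solutions are spanned by $e^{((d+1)/2 + k)y}$ and $e^{((1-d)/2 + k)y}$ for $k < n$, the rapid decay of $g$ at $+\infty$ and its $L^2$-integrability at $-\infty$ force the second family of coefficients to vanish, yielding a partial asymptotic expansion of $g$ at $-\infty$ whose exponents are exactly those of the Taylor expansion of a smooth $f_1$ at $0$. The main obstacle I anticipate is the last step: upgrading the $L^2$-control of $\tilde P_n\tilde Q_n g$ to pointwise and derivative-wise control of the remainder, so that the expansion is term-by-term for every $f_1^{(k)}$; this will require exploiting the combined $X$-, $H$-, $Y$-estimates on $f$ via Sobolev-type arguments, rather than $Y^n$ alone.
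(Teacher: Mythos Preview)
Your strategy is entirely different from the paper's. The paper never analyzes the Lie-algebra formulas~(\ref{E:action}) directly on a putative smooth vector; instead it transports the problem to the induced picture via $M_d:I_d^+\to S_d([0,\infty))$. For $S_d\subseteq\mathcal H_d^\infty$ it compares the Kirillov $L^2$-norm with the sup-norm on $Ind_d$ through an explicit intertwining integral (Corollary~\ref{C:norm}). For the hard inclusion $\mathcal H_d^\infty\subseteq S_d$ it identifies the $K$-finite vectors on both sides explicitly --- they are the functions $x^{(d+1)/2}p(x)e^{-x}$, produced from Laguerre polynomials (Lemma~\ref{L:Kfinite}) --- and then invokes the Casselman--Wallach uniqueness theorem: since $M_d$ is a continuous $G$-map restricting to a $(\mathfrak g,K)$-isomorphism, it must be a homeomorphism of the smooth globalizations, hence onto $\mathcal H_d^\infty$. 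No asymptotic analysis at $x=0$ is ever carried out on the Kirillov side.

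The obstacle you flag is genuine, and your outline does not yet overcome it. From $e^{-ny}\tilde P_n\tilde Q_n g\in L^2$ you get a $2n$-th order constant-coefficient ODE with right-hand side in $e^{ny}L^2$, but converting this into a termwise asymptotic expansion $g(y)\sim\sum_{k\ge0}a_k e^{((d+1)/2+k)y}$ at $y\to-\infty$, with remainders controlled in every derivative, is not just a matter of counting which homogeneous exponentials are forbidden. Two concrete difficulties: (i) once $n>d$ the root families $\{(d+1)/2+k\}$ and $\{(1-d)/2+k'\}$ overlap, so the homogeneous space contains $y^j e^{\lambda y}$ terms and the ``second family vanishes'' dichotomy is no longer clean; (ii) even when the roots are simple, the exponents $(1-d)/2+k$ with $(d-1)/2<k<d$ lie strictly between $0$ and $(d+1)/2$, so neither rapid decay at $+\infty$ nor $L^2$-integrability at $-\infty$ excludes them --- killing those terms, and then controlling all derivatives of the remainder, requires interleaving $X$, $H$, $Y$ more carefully than your sketch indicates. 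This programme can probably be completed, but it is substantial hard analysis; the paper's route trades all of it for the Casselman--Wallach black box. A minor side remark on your easy inclusion: stability of $S_d$ under the \emph{formal} operators in~(\ref{E:action}) is not literally the smoothness criterion --- you must also check that these formulas compute the honest infinitesimal generators of $R_d^\pm$, which for $Y$ is most cleanly done via $Y=-wXw^{-1}$ together with Corollary~\ref{C:stab}.
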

To prove Theorem~\ref{T:smooth} we will need to compare the  Fr\'{e}chet
topologies on the induced space and on the Kirillov space. 

\section{The Fr\'{e}chet topology}\label{S:frechet}
In this section we describe the  Fr\'{e}chet topology on the different
smooth spaces that we are considering. This topology will play a role
in determining the smooth space of the Kirillov model.

Let $G=SL(2,\mathbb{R})$. Let $d$ be a positive integer and let
\begin{equation} \label{E:induced}
Ind_{d}=\{F:G\to \mathbb{C}| \;\text{$F$ is smooth and 
$\;F(n(x)s(a)g)=a^{d+1}F(g), \;$ for all 
$x\in \mathbb{R},a\in \mathbb{R}^{*}$}\}
\end{equation}
$G$ acts on this space by right translations and $\mathfrak{g}=\text{Lie}(G)$
acts by left invariant differential operators and induces an action
of the enveloping algebra $U(\mathfrak{g})$ on $Ind_{d}$. 
There is an $L^{2}$ norm defined
on this space by
\begin{equation} \label{E:norm}
||F||^{2}=\frac{1}{2\pi}\int_{0}^{2\pi}|F(r(\theta))|^{2}d\theta
\end{equation}
The space $Ind_{d}$ is given the Fr\'{e}chet topology defined by the seminorms
$$
||F||_{2,D}=||D(F)||, \;\;\;D\in  U(\mathfrak{g}).
$$
We will now show that the space $Ind_{d}$ is isomorphic as a vector space to the space $I_{d}$ 
defined in (\ref{E:Inu}).
For each $F\in Ind_{d}$ define
\begin{equation} \label{E:intertwiner}
\phi_{F}(x)=F(wn(x))\: .
\end{equation}
The following proposition is well known:
\begin{proposition}
The mapping $F\to \phi_{F}$ is an isomorphism of vector
spaces. 
\end{proposition}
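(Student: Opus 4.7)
The plan is to verify three properties of the linear assignment $F\mapsto \phi_F$: well-definedness into $I_d$, injectivity, and surjectivity. Linearity is obvious.

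For well-definedness, smoothness of $\phi_F(x)=F(wn(x))$ as a function of $x$ is inherited from smoothness of $F$. The content is that $w\phi_F(x)=x^{-d-1}\phi_F(-1/x)$ is also smooth at every $x\in\mathbb{R}$. My plan is to derive, by a direct matrix computation in $SL(2,\mathbb{R})$, the Bruhat-type identity
\[
wn(y) \;=\; -\,n(-1/y)\,s(1/y)\,wn(-1/y)\,w, \qquad y\neq 0.
\]
Applying $F$ and using the transformation rule $F(n(a)s(b)g)=b^{d+1}F(g)$ (together with $s(-1)=-I$, giving $F(-g)=(-1)^{d+1}F(g)$), then substituting $x=-1/y$, I obtain $w\phi_F(x)=F(wn(x)w)$. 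The right side is manifestly smooth in $x\in\mathbb{R}$ (the point $x=0$ corresponds to $wn(0)w=-I$), so $\phi_F\in I_d$.

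Injectivity is short: if $\phi_F\equiv 0$, the transformation law forces $F$ to vanish on the open dense big Bruhat cell $\{g\in G:g_{21}\neq 0\}$, so $F\equiv 0$ by continuity.

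The main obstacle is surjectivity. Given $\phi\in I_d$, I would define $F$ on $U_1=\{g:g_{21}\neq 0\}$ by $F(g)=g_{21}^{-d-1}\phi(g_{22}/g_{21})$; this is smooth there and satisfies the transformation law by construction. The question is whether $F$ extends smoothly across $\{g_{21}=0\}$, which coincides with the Borel subgroup. On this locus the relation $g_{11}g_{22}-g_{12}g_{21}=1$ forces $g_{22}\neq 0$, so $G=U_1\cup U_2'$ with $U_2'=\{g:g_{22}\neq 0\}$. Using the identity $w\phi(u)=u^{-d-1}\phi(-1/u)$, the formula on $U_1\cap U_2'$ rewrites as
\[
F(g)\;=\;(-1)^{d+1}\,g_{22}^{-d-1}\,(w\phi)(-g_{21}/g_{22}),
\]
and because $w\phi$ is smooth on all of $\mathbb{R}$ by the hypothesis $\phi\in I_d$, this expression extends smoothly to all of $U_2'$, including across $\{g_{21}=0\}$. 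The two formulas agree on the overlap, producing a global smooth $F\in Ind_d$ with $\phi_F=\phi$. The whole point of defining $I_d$ by requiring smoothness of both $\phi$ and $w\phi$ is precisely to make this two-chart extension work.
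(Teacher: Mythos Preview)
Your proof is correct. The well-definedness and injectivity arguments are essentially the same as the paper's: the paper simply records $\phi_{wF}=w\phi_F$ (your formula $w\phi_F(x)=F(wn(x)w)$ says exactly this, since the right action of $w$ gives $(wF)(g)=F(gw)$), and invokes density of the big cell for injectivity.

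For surjectivity, however, you and the paper take genuinely different routes. The paper uses the Iwasawa decomposition $G=NAK$: it defines $F_\phi$ on $K$ via $F_\phi(r(\theta_x))=(1+x^2)^{(d+1)/2}\phi(x)$ for $\theta_x\in(0,\pi)$, extends to $\theta=\pi$ by a limit, and then to all of $G$ by equivariance; smoothness at $\theta=\pi$ is handled by the identity $F_\phi(r(\theta))=F_{w\phi}(r(\theta-\pi/2))$. Your approach instead uses two Bruhat-type charts $U_1=\{g_{21}\neq 0\}$ and $U_2'=\{g_{22}\neq 0\}$, writing down closed formulas on each and checking agreement on the overlap. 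Your method is more algebraic and avoids the trigonometric change of variables; the paper's method has the advantage of making the restriction to $K$ explicit, which it later exploits when comparing seminorms and setting up the Fr\'{e}chet topology via $\|F\|_{\infty,n}=\|\partial^n F\|_\infty$. Both arguments ultimately hinge on the same point you identify: the definition of $I_d$ is engineered so that smoothness of $w\phi$ furnishes exactly the second chart (or, in the paper's language, smoothness at the missing point of $K$).
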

\begin{proof}
Let $F\in Ind_{d}$. Since $F$ is smooth on $G$ it follows that $\phi_{F}$ is
smooth on $\mathbb{R}$. It is easy to check that $\phi_{wF}=w\phi_{F}$
where $w\phi_{F}(x)=x^{-d-1}\phi_{F}(-1/x)$ hence $w\phi_{F}$ is smooth and
we have that the mapping sends $Ind_{d}$ into $I_{d}$. Since the set of 
elements of the form $n(y)s(a)wn(x)$ is dense in $G$, it follows that a 
function $F$ in $Ind_{d}$ is determined by its restriction to $wn(x)$ hence
the mapping $F\to \phi_{F}$ is one to one. It remains to show that this 
mapping is onto. We have
\begin{equation} \label{E:theta}
wn(x)=s((1+x^2)^{-1/2})n(-x)r(\theta_{x})
\end{equation}
where $\theta_{x}\in (0,\pi)$ is determined by the equalities
$sin(\theta_{x})=(1+x^{2})^{-1/2},\;cos(\theta_{x})=-x(1+x^{2})^{-1/2}$.
Given $\phi \in I_{d}$ we define
\begin{equation} \label{E:fdefine}
F_{\phi}(r(\theta_{x}))=(1+x^{2})^{(d+1)/2}\phi(x)
\end{equation}
for $\theta_{x}\in (0,\pi)$ and define
$$
F_{\phi}(r(\pi))=F_{\phi}(-I)=
\lim_{x\to \infty}(1+x^{2})^{(d+1)/2}\phi(x)=(w\phi)(0).
$$
$F_{\phi}$ is defined on all of $G$ via the equivariance property
\begin{equation}\label{E:equi}
F_{\phi}(n(y)s(a)r(\theta))=sgn(a)a^{d+1}F_{\phi}(r(\theta))
\end{equation}
and the fact that each $g\in G$ can be written uniquely in the form
$g=n(y)s(a)r(\theta)$ for $\theta \in (0,\pi]$.
Since $F_{\phi}(r(\theta-\pi))=F_{\phi}(r(-\pi)r(\theta))=
(-1)^{d+1} F_{\phi}(r(\theta))$ it follows that $F_{\phi}$ satisfies
(\ref{E:equi}) for all $\theta$. It remains to show that 
$F_{\phi}(r(\theta))$ is smooth as a function of $\theta$ (which will show
that $F_{\phi}$ is smooth on $G$.) This is obvious by (\ref{E:fdefine})
when $\theta \in (0,\pi)$. When $\theta = \pi$ we notice that 
$F_{\phi}(r(\theta))=F_{w\phi}(r(\theta-\pi/2))$ hence the smoothness 
of $F_{\phi}$ at $\theta=\pi$ is equivalent to the smoothness of $F_{w\phi}$
at $\theta=\pi/2$ which was already established. It is also immediate that
$\phi_{F_{\phi}}=\phi$.
\end{proof}
The isomorphism $F\to \phi_{F}$ defined above induces  
a Fr\'{e}chet topology on the space $I_{d}$. To give this topology explicetely we need:
\begin{proposition}
Let $F\in Ind_{d}$ and $\phi_{F}:\mathbb{R}\to \mathbb{C}$ as defined above.
Then
$$
||F||^{2}=\frac{1}{\pi}\int_{-\infty}^{\infty}(1+x^{2})^{d}|\phi_{F}(x)|^{2}dx
$$
\end{proposition}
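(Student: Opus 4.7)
The plan is to rewrite the circle integral defining $\|F\|^2$ as an integral on the real line via the parametrization $\theta = \theta_x$ coming from the Iwasawa-type factorization (\ref{E:theta}).

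First I would use (\ref{E:theta}) together with the commutation relation $s(a)n(b) = n(a^{2}b)s(a)$ to rewrite
$$wn(x) = n\!\left(-\tfrac{x}{1+x^{2}}\right) s\!\left((1+x^{2})^{-1/2}\right) r(\theta_{x}),$$
so that the equivariance property in (\ref{E:induced}) gives
$$\phi_{F}(x) = F(wn(x)) = (1+x^{2})^{-(d+1)/2}\,F(r(\theta_{x})),$$
which is (\ref{E:fdefine}) read from right to left. Squaring yields $|F(r(\theta_{x}))|^{2}=(1+x^{2})^{d+1}|\phi_{F}(x)|^{2}$.

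Next I would exploit the antipodal symmetry. Since $-I = s(-1)$, the equivariance (\ref{E:equi}) implies $F(r(\theta+\pi)) = (-1)^{d+1}F(r(\theta))$, so $|F(r(\theta))|^{2}$ is $\pi$-periodic, which folds the $[0,2\pi]$ integral in (\ref{E:norm}) into twice the $[0,\pi]$ integral:
$$\|F\|^{2} = \frac{1}{\pi}\int_{0}^{\pi}|F(r(\theta))|^{2}\,d\theta.$$

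Finally I would change variables by $\theta = \theta_{x}$. From $\cos\theta_{x} = -x(1+x^{2})^{-1/2}$ (and $\sin\theta_{x}>0$), differentiating gives $\theta_{x}' = (1+x^{2})^{-1}$, and $\theta_{x}$ is a diffeomorphism $(-\infty,\infty)\to(0,\pi)$. Substituting $d\theta = dx/(1+x^{2})$ and the expression for $|F(r(\theta_{x}))|^{2}$ above, the factor $(1+x^{2})^{d+1}$ is reduced by one power, leaving
$$\|F\|^{2} = \frac{1}{\pi}\int_{-\infty}^{\infty}(1+x^{2})^{d}|\phi_{F}(x)|^{2}\,dx.$$

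The only mildly delicate step is the Jacobian computation $\theta_{x}' = (1+x^{2})^{-1}$ together with keeping track of the exact power of $(1+x^{2})$ produced by the equivariance; everything else is bookkeeping.
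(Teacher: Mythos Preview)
Your proof is correct and follows essentially the same route as the paper: both use the relation $|\phi_F(x)|^2=(1+x^2)^{-d-1}|F(r(\theta_x))|^2$ from (\ref{E:fdefine}), the change of variables $\theta=\theta_x=\cot^{-1}(-x)$ with Jacobian $(1+x^2)^{-1}$, and the $\pi$-periodicity of $|F(r(\theta))|^2$ to fold the integral. The only cosmetic difference is that the paper runs the computation starting from the right-hand side, whereas you start from $\|F\|^2$.
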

\begin{proof}
Using (\ref{E:theta}) we have
\begin{equation}
\begin{split}
\frac{1}{\pi}\int_{-\infty}^{\infty}(1+x^{2})^{d}|\phi_{F}(x)|^{2}dx &=
\frac{1}{\pi}\int_{-\infty}^{\infty}(1+x^{2})^{d}(1+x^{2})^{-d-1}
|F(r(\theta_{x}))|^{2}dx\\
&=\frac{1}{\pi}\int_{-\infty}^{\infty}(1+x^{2})^{-1}
|F(r(cot^{-1}(-x))|^{2}dx \\
&=\frac{1}{\pi}\int_{0}^{\pi}|F(r(\theta))|^{2}d\theta\\
&=\frac{1}{2\pi}\int_{0}^{2\pi}|F(r(\theta))|^{2}d\theta\\
&=||F||^{2}
\end{split}
\end{equation}
\end{proof}
Hence we define a Fr\'{e}chet topology on $I_d$ using the seminorms
$$
||\phi||_{d,D}=\int_{-\infty}^{\infty}(1+x^{2})^{d}|(D\phi)(x)|^{2}dx, \;\;\;D\in U(\mathfrak{g}).
$$
It is useful to replace the seminorms given in the  Fr\'{e}chet topology of $Ind_{d}$ with an equivalent
set of seminorms which gives the topology of uniform convergence of functions
and derivatives. That is, For $F \in Ind_{d}$ define
$||F||_{\infty}=\text{max}_{\theta}|F(\theta)|$ be the L-infinity norm on 
$Ind_{d}$.
It is well known (\cite{wal1} Theorem 1.8) that the set of seminorms 
$$
||F||_{\infty,D}=||D(F)||_{\infty}, D\in U(\mathfrak{g})
$$
is an equivalent set of seminorms to the set above and gives the same Fr\'{e}chet
topology. Moreover, let 
$$
\partial = 
\begin{pmatrix}
0   & -1\\
1   & 0 \\
\end{pmatrix},\;\text{and}\;\;\;\partial(F)(r(\theta))=\frac{\partial}{\partial \theta}F((r(\theta))).
$$
Then it is enough to have the seminorms
$$
||F||_{\infty,n}=||\partial^{n}(F)||_{\infty}.
$$
Hence we get the following Corollary:
\begin{corollary}
The Fr\'{e}chet topology on $I_{d}$ is given by the set of seminorms
$$
||f||_{\infty,d,D}=|(1+x^{2})^{(d+1)/2}(D\phi)(x)|_{\infty},\;\;\; 
D\in U(\mathfrak{g})
$$
or by the subset of seminorms
$$
||f||_{\infty,d,\partial^{n}}=|(1+x^{2})^{(d+1)/2}
(\partial\phi)(x)|_{\infty},\;\;\; n=0,1,...
$$
\end{corollary}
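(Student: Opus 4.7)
The plan is to transport the two equivalent families of seminorms on $Ind_d$ (furnished by \cite{wal1} Theorem 1.8, as recalled just above the corollary) to the space $I_d$ via the vector space isomorphism $F \leftrightarrow \phi_F$. Since the Fr\'{e}chet topology on $I_d$ was defined to be the pullback of that on $Ind_d$, and since the isomorphism is $U(\mathfrak{g})$-equivariant (the action on $I_d$ being by definition $D\phi_F := \phi_{DF}$), the only real content is the sup-norm identity
$$
\|F\|_\infty \;=\; \sup_{x\in\mathbb{R}}(1+x^2)^{(d+1)/2}\,|\phi_F(x)|,
$$
which is the pointwise analogue of the $L^2$ identity already proved.

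To establish this identity I would first reduce the sup from $\theta\in[0,2\pi]$ to $\theta\in[0,\pi]$. The relation $r(\theta+\pi) = s(-1)\,r(\theta)$ combined with the equivariance (\ref{E:equi}) gives $F(r(\theta+\pi)) = (-1)^{d+1}F(r(\theta))$, so $|F|$ is $\pi$-periodic on the rotation subgroup. On the open interval $(0,\pi)$ the parametrization $x \mapsto \theta_x$ from (\ref{E:theta}) is a diffeomorphism, and (\ref{E:fdefine}) reads exactly $|F(r(\theta_x))| = (1+x^2)^{(d+1)/2}|\phi_F(x)|$; continuity of $F$ on the compact group $SO(2)$ then closes up the interval (the endpoints $\theta = 0, \pi$ are approached as $x \to \mp\infty$) and yields the identity.

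Applying this identity to $DF \in Ind_d$ in place of $F$ and using $\phi_{DF} = D\phi_F$ gives
$$
\|DF\|_\infty \;=\; \sup_{x\in\mathbb{R}}(1+x^2)^{(d+1)/2}\,|(D\phi_F)(x)|.
$$
Consequently the two families $\{\|DF\|_\infty\}_{D\in U(\mathfrak{g})}$ and $\{\|\partial^n F\|_\infty\}_{n\ge 0}$ on $Ind_d$, equivalent by Wallach's theorem, transport to the two families stated in the corollary. The only mildly non-routine step is the equivariance reduction from $[0,2\pi]$ to $[0,\pi]$ together with the continuity argument closing up the open interval; both follow directly from (\ref{E:equi}) and the smoothness of $F$ on $SO(2)$, so no further obstacle remains.
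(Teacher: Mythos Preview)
Your proposal is correct and matches the paper's (implicit) approach: the paper gives no separate proof, presenting the corollary as immediate from the preceding discussion, and your argument spells out precisely that transport---the pointwise identity $|F(r(\theta_x))|=(1+x^2)^{(d+1)/2}|\phi_F(x)|$ coming from (\ref{E:theta}) and (\ref{E:fdefine}), the $\pi$-periodicity from (\ref{E:equi}), and the $U(\mathfrak{g})$-equivariance of $F\mapsto\phi_F$.
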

\begin{proposition} \label{P:cont}
The functional $l_{\lambda,n}$ on $I_d$ defined by
$$
l_{\lambda,n}(\phi)=\int_{-\infty}^{\infty}t^{n}\phi(t)e^{-i\lambda t}dt
$$
is continous for $n=0,....,d-1$ and $\lambda \in \mathbb{R}$.
\end{proposition}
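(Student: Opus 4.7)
The plan is to bound $l_{\lambda,n}(\phi)$ directly by a single one of the seminorms defining the Fr\'{e}chet topology on $I_d$, namely the simplest one corresponding to $D = I$:
$$
\|\phi\|_{\infty, d, I} = \sup_{x \in \mathbb{R}}\,\bigl|(1+x^2)^{(d+1)/2}\phi(x)\bigr|.
$$
This will automatically give continuity, since continuity of a linear functional on a Fr\'{e}chet space amounts to being dominated by (a finite combination of) the defining seminorms.

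First I would verify that the integral defining $l_{\lambda,n}(\phi)$ converges absolutely for $\phi \in I_d$ and $n \le d-1$. From the estimate above we immediately get the pointwise bound
$$
|\phi(t)| \le \|\phi\|_{\infty, d, I}\,(1+t^2)^{-(d+1)/2},
$$
so the integrand $t^n \phi(t) e^{-i\lambda t}$ is majorized by $\|\phi\|_{\infty, d, I}\,|t|^n(1+t^2)^{-(d+1)/2}$. Since $n \le d-1$, we have $n - (d+1) \le -2$, so the tail is $O(|t|^{-2})$ and the integral is absolutely convergent.

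Next I would combine these facts into the single estimate
$$
|l_{\lambda,n}(\phi)| \le \|\phi\|_{\infty, d, I}\,\int_{-\infty}^{\infty} |t|^n (1+t^2)^{-(d+1)/2}\,dt = C_{n,d}\,\|\phi\|_{\infty, d, I},
$$
where the constant $C_{n,d}$ depends only on $n$ and $d$ (and is independent of $\lambda$). This is precisely the statement that $l_{\lambda,n}$ is a continuous linear functional on the Fr\'{e}chet space $I_d$.

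There is essentially no obstacle in this argument: the only point requiring care is the matching between the decay rate forced by the $(1+x^2)^{(d+1)/2}$ weight (built into the topology via the $L^\infty$ form of the seminorms established in the preceding corollary) and the growth $|t|^n$ of the integrand, and the strict inequality $n \le d-1$ is exactly what makes $\int |t|^n (1+t^2)^{-(d+1)/2} dt$ finite. No use of the derivative seminorms ($D \ne I$) is needed.
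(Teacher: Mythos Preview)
Your proof is correct and is essentially the same as the paper's: both use the pointwise bound $|\phi(t)|\le \|\phi\|_{\infty,d,I}\,(1+t^2)^{-(d+1)/2}$ coming from the $L^\infty$ seminorm with $D=I$, together with the integrability of $|t|^n(1+t^2)^{-(d+1)/2}$ for $n\le d-1$. The only cosmetic difference is that the paper phrases the argument sequentially (showing $l_{\lambda,n}(\phi_m)\to 0$ whenever $\phi_m\to 0$), whereas you give the equivalent direct seminorm estimate $|l_{\lambda,n}(\phi)|\le C_{n,d}\,\|\phi\|_{\infty,d,I}$.
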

\begin{proof}
Assume that $\phi_{m} \in I_{d}$ satisfy $\lim_{m\to \infty}\phi_{m}=0$. in the  Fr\'{e}chet topology.
Then $l_{\infty}((1+t^{2})^{(d+1)/2}\phi)\to 0$ hence there exist
a sequence of positive constants $c_{m}$ such that 
$$
|\phi_{m}(t)|\leq c_{m}(1+t^2)^{-(d+1)/2},\;\;\;\
\text{for all $t\in \mathbb{R}$.}
$$
and $\lim_{m\to \infty} c_{m}=0$. It follows that
$$
|l_{\lambda,n}(\phi_{m})|\leq c_{m}\int_{-\infty}^{\infty}|t|^{n}
(1+t^2)^{-(d+1)/2}dt.
$$
The integral above converges by the assumption on $n$ hence we have that
$\lim_{m\to \infty}l_{\lambda,n}(\phi_{m})=0$.
\end{proof}
Let $I_{d}^{+}$ and $I_{d}^{-}$ be the subspaces of $I_{d}$ defined in 
(\ref{E:Idplus},(\ref{E:Idminus}).
Then it is easy to see that
$$
I_{d}^{+}=\{\phi\in I_{d} \;|\; l_{\lambda,n}(\phi)=0,\;\;
\text{for all $0\leq n\leq d-1$ and $\lambda \leq 0$} \}\\
$$
and
$$
I_{d}^{-}=\{\phi\in I_{d} \;|\; l_{\lambda,n}(\phi)=0,\;\;
\text{for all $0\leq n\leq d-1$ and $\lambda \geq 0$} \}
$$
\begin{corollary} \label{C:closed}
$I_{d}^{+}$ and $I_{d}^{-}$ are closed invariant subspaces of $I_{d}$.
They are irreducible as Fr\'{e}chet representations.
\end{corollary}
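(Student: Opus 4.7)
The plan is to establish closedness directly from Proposition~\ref{P:cont}, invoke the earlier invariance results, and then derive Fréchet irreducibility by reducing to the classical algebraic irreducibility of the discrete series Harish--Chandra module.

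First I would observe that by Proposition~\ref{P:cont} each functional $l_{\lambda,n}$ with $0\leq n\leq d-1$ and $\lambda\in\mathbb{R}$ is Fréchet-continuous on $I_d$, so each $\ker(l_{\lambda,n})$ is a closed subspace. Using the characterizations displayed just before the corollary,
$$
I_d^{+}=\bigcap_{\lambda\leq 0}\bigcap_{n=0}^{d-1}\ker(l_{\lambda,n}),\qquad I_d^{-}=\bigcap_{\lambda\geq 0}\bigcap_{n=0}^{d-1}\ker(l_{\lambda,n}),
$$
so $I_d^{+}$ and $I_d^{-}$ are closed as intersections of closed subspaces. Their $G$-invariance is already recorded in Corollary~\ref{C:inv} for $I_d^{+}$ and, by an identical argument, in the paragraph following~(\ref{E:Idminus}) for $I_d^{-}$.

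For irreducibility in the Fréchet sense, the plan is to reduce to the irreducibility of the underlying $(\mathfrak{g},K)$-module. The $K$-finite vectors in $I_d^{\pm}$ form the standard lowest/highest weight Harish-Chandra module of weight $\pm(d+1)$ for the discrete series, whose algebraic irreducibility is classical. Suppose $V\subseteq I_d^{+}$ is a nonzero closed $G$-invariant subspace. Because every vector in $I_d^{+}$ is smooth, $V$ is a smooth $G$-submodule, and its $K$-finite part is a nonzero $(\mathfrak{g},K)$-submodule of the Harish-Chandra module of $I_d^{+}$, hence coincides with all of it. A density statement for $K$-finite vectors in the Fréchet topology (addressed below) then forces $V=I_d^{+}$, and the same reasoning handles $I_d^{-}$.

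The main obstacle will be justifying the density of $K$-finite vectors in $I_d^{\pm}$ with respect to the Fréchet topology rather than the weaker $L^{2}$ topology of~(\ref{E:norm}). I would handle this by transporting the problem to $Ind_d$ via the isomorphism $\phi \mapsto F_\phi$ and working with the equivalent seminorms $\|F\|_{\infty,\partial^n}=\|\partial^n F\|_\infty$ introduced in the paper. For any $F\in Ind_d$, the restriction $\theta\mapsto F(r(\theta))$ is a smooth function on the circle $K\cong SO(2)$, whose Fourier series in $\theta$ converges to it uniformly together with all $\theta$-derivatives; the partial sums are $K$-finite, and left-invariant differential operators commute with this approximation, giving convergence in every seminorm $\|\cdot\|_{\infty,\partial^n}$. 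This yields the density needed to finish the argument.
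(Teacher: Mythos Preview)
Your argument for closedness and invariance matches the paper exactly: intersection of kernels of the continuous functionals $l_{\lambda,n}$, plus Corollary~\ref{C:inv} and the remark after~(\ref{E:Idminus}).

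For irreducibility the paper takes a slightly different, more black-box route. Rather than working inside $I_d^{\pm}$, it invokes the known lattice of closed invariant subspaces of the full induced module $I_d$: by the classical $(\mathfrak{g},K)$-module analysis (the paper cites Godement), $I_d$ has exactly three nontrivial closed invariant subspaces, namely a lowest-weight piece, a highest-weight piece, and their sum. Since $I_d^{+}$ and $I_d^{-}$ are distinct nonzero closed invariant subspaces with $I_d^{+}\cap I_d^{-}=\{0\}$, they are forced to be the two irreducible pieces. Your approach instead fixes one of $I_d^{\pm}$, takes a nonzero closed invariant $V$ inside it, and argues via irreducibility of the Harish--Chandra module plus density of $K$-finite vectors that $V$ is everything. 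Both are correct; the paper's version is shorter because it outsources the entire structure to a citation, while yours is more self-contained and makes explicit the mechanism (density of $K$-finite vectors in the Fr\'echet topology) that is implicit in any such statement. One small point you leave tacit: that the $K$-finite part of a nonzero closed $K$-invariant $V$ is itself nonzero. This follows from your own density argument (the $K$-isotypic projectors are continuous and preserve $V$, and if they all annihilated $v$ its Fourier series would be zero), but it is worth saying explicitly.
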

\begin{proof}
By Proposition~\ref{P:cont} they are closed as intersection of closed
subspaces. By Corollary~\ref{C:inv} and the discussion below it they are $G$ invariant subspaces. 
By the theory of $(\mathfrak{g},K)$ modules (See for example \cite{god} p. 2.8 Theorem 2) $I_d$ has exactly 
three nontrivial invariant
subspaces, two irreducible (lowest weight and highest weight) subspaces and
their sum. It follows that $I_{d}^{+}$ and $I_{d}^{-}$ are irreducible.
\end{proof}
Our next goal is to show that $S_{d}([0,\infty))$ is a space of smooth vectors
in $\mathbf{H}_{d}$. That is, our first step in showing that $\mathbf{H}_{d}^{\infty}=S_{d}([0,\infty))$
is to show that $S_{d}([0,\infty))\subseteq  \mathbf{H}_{d}$. To do that we will need to compare the $L^{2}$
inner product on  $S_{d}([0,\infty))$ with the standard (compact) inner product on $Ind_{d}$. Combining the 
isomorphism $T_d$ from $S_{d}$ to $I_{d}$ and the isomorphism $\phi \to F_{\phi}$ defined in 
(\ref{E:fdefine}),(\ref{E:equi})  from $I_{d}$
to $Ind_{d}$ we get a one to one mapping from  $S_{d}([0,\infty))$ to $Ind_{d}$ and a subspace of $Ind_{d}$. 
This infinite
dimensional subspace has two different inner products. The standard inner product on the induced space 
(\ref{E:norm})
and the inner product induced from the $L^{2}$ inner product on $S_{d}$. Following (\cite{god} (288)) we will 
give an explicit formula for this induced inner product and compare between the two.

We shall define an invariant ``norm'' on $Ind_{d}$. (It is in fact  a norm on the
invariant subspaces)
$$
||f||_{I}^{2}=\frac{1}{2\pi}\int_{0}^{2\pi}A(f)(r(\theta))\overline{f(r(\theta))}
d\theta
$$
where $A(f)$ is the intertwining operator defined by
$$
A(f)(g)=\int_{-\infty}^{\infty}f(w^{-1}n(x)g)dx,\;\;\;g\in G,f\in Ind_{d}.
$$
\begin{remark}
Since $f(r(\theta-\pi))=(-1)^{d}f(r(\theta))$ for every $\theta$ and
since the same holds for $A(f)$ we have that
$$
||f||_{I}=\frac{1}{\pi}\int_{0}^{\pi}A(f)(r(\theta))\overline{f(r(\theta))}
d\theta
$$
\end{remark}
For $x\in \mathbb{R}$ we define $\theta_{x}\in (0,\pi)$ by
$$
\theta_{x}=\text{cot}^{-1}(-x)
$$
We have
\begin{equation} \label{E:theta1}
wn(x)=s((1+x^2)^{-1/2})n(-x)r(\theta_{x})
\end{equation}
and
\begin{equation} \label{E:theta2}
w^{-1}n(x)=s(-(1+x^2)^{-1/2})n(-x)r(\theta_{x})
\end{equation}
Hence using the change of variable $\theta=\text{cot}^{-1}(x)$ we have
$$
A(f)(g)=\int_{0}^{\pi}f(r(\theta)g)(sin(\theta))^{d-1}d\theta
$$
and in particular
$|A(f)(r(\theta))|\leq \pi \, L^{\infty}(f)$.
Hence we have proved:
\begin{lemma}
$$
||f||_{I}\leq  \pi \, L^{\infty}(f)
$$
\end{lemma}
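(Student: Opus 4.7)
The plan is to combine the bound $|A(f)(r(\theta))|\leq \pi L^{\infty}(f)$, which was established immediately before the lemma statement, with a trivial pointwise bound on $f$ itself. The key ingredient is already present: starting from the defining integral
$$
A(f)(g)=\int_{-\infty}^{\infty}f(w^{-1}n(x)g)dx,
$$
the substitution $\theta=\operatorname{cot}^{-1}(x)$ and the decomposition (\ref{E:theta2}) together with the equivariance $f(s(a)g)=a^{d+1}f(g)$ produce
$$
A(f)(g)=\int_{0}^{\pi}f(r(\theta)g)(\sin\theta)^{d-1}d\theta,
$$
after which $|\sin\theta|\leq 1$ on $[0,\pi]$ yields $|A(f)(r(\theta))|\leq\pi L^{\infty}(f)$ uniformly in $\theta$.

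With this in hand, I would plug back into the definition
$$
\|f\|_{I}^{2}=\frac{1}{2\pi}\int_{0}^{2\pi}A(f)(r(\theta))\overline{f(r(\theta))}\,d\theta
$$
and estimate the integrand factorwise: the first factor is bounded by $\pi L^{\infty}(f)$ by the above, and the second factor is bounded by $L^{\infty}(f)$ by definition. Integrating the constant bound over $[0,2\pi]$ cancels the $\tfrac{1}{2\pi}$ prefactor up to a harmless factor of $2\pi$, yielding
$$
\|f\|_{I}^{2}\leq \pi\bigl(L^{\infty}(f)\bigr)^{2},
$$
and taking square roots (noting $\sqrt{\pi}\leq\pi$) gives the claimed estimate $\|f\|_{I}\leq \pi L^{\infty}(f)$.

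There is no real obstacle here; the content of the lemma is just the assembly of the two elementary bounds above. The only point worth being careful about is that the reduction of $A(f)$ to a compact integral over $[0,\pi]$ really uses the equivariance under $s(a)$ and the Iwasawa decomposition (\ref{E:theta2}) so that the factor $(1+x^{2})^{-(d+1)/2}$ appearing from $s((1+x^{2})^{-1/2})$ combines with the Jacobian $dx=(1+x^{2})d\theta$ to produce exactly $(\sin\theta)^{d-1}$; once that is verified, the rest is a one-line majorization.
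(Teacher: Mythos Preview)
Your proposal is correct and is exactly the argument the paper has in mind: the paper simply records the bound $|A(f)(r(\theta))|\leq \pi L^{\infty}(f)$ and declares the lemma proved, leaving the substitution into the defining integral for $\|f\|_I^2$ implicit. You have spelled out that implicit step, and in fact obtained the slightly sharper constant $\sqrt{\pi}$ before relaxing it to $\pi$; either constant suffices for the application in Corollary~\ref{C:norm}.
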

\begin{proposition}
Let $f\in I_{d}^{+}$. Then
\begin{equation} \label{E:inner}
\frac{(-2\pi i)^{d}}{2((d-1)!)}\int_{-\infty}^{\infty}\int_{-\infty}^{\infty} 
sgn(y)y^{d-1}f(x-y)\overline{f(x)}dydx =||M(f)||_{2}^{2}
\end{equation}
where $||M(f)||_{2}$ is the $L^{2}$ norm of $M(f)$ in the space $L^{2}\left( (0,\infty),dx/x) \right)$.
\end{proposition}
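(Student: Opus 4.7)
The plan is to recognize the double integral on the left-hand side as an $L^{2}$ pairing on $\mathbb{R}$ with a convolution kernel, then apply Plancherel to pass to the Fourier side, where $f\in I_{d}^{+}$ has explicit support. Let $K(y)=\sgn(y)y^{d-1}$; then the double integral equals $\langle K*f,\,f\rangle_{L^{2}(\mathbb{R})}$. Absolute convergence is guaranteed by the asymptotic expansion $f(x)\approx a_{d+1}x^{-d-1}+\cdots$ proved in Section~4, which gives $|f(x)|\lesssim(1+|x|)^{-d-1}$; the weight $|y|^{d-1}$ is then defeated by the decay of $f(x-y)\overline{f(x)}$ at infinity.

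Since $f\in I_{d}^{+}=T_{d}(S_{d}([0,\infty)))$, writing $f=T_{d}(g)$ with $g=M(f)$ and inverting the Fourier transform in~(\ref{E:Tmu}) yields $\hat{f}(t)=t^{(d-1)/2}g(t)$ for $t>0$ and $\hat{f}(t)=0$ for $t<0$, so $|\hat{f}(t)|^{2}=t^{d-1}|g(t)|^{2}$ on $(0,\infty)$. The core computation is to evaluate the Fourier transform of the non-integrable kernel $K$ as a tempered distribution. Using the Abel regularization
$$
\int_{0}^{\infty}y^{d-1}e^{-(\epsilon\pm it)y}\,dy=\frac{(d-1)!}{(\epsilon\pm it)^{d}},\qquad \epsilon>0,
$$
splitting the integral defining $\hat K$ at $y=0$ and letting $\epsilon\to 0^{+}$ with the principal branch of $z^{-d}$, one finds (for $t>0$) a formula of the shape
$$
\hat{K}(t)=2(2\pi)^{-1/2}(d-1)!\,i^{-d}t^{-d},
$$
with the mirror formula for $t<0$ (which will not be needed).

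By Plancherel $\langle K*f,f\rangle=\sqrt{2\pi}\int\hat{K}(t)|\hat{f}(t)|^{2}\,dt$, and because $\hat{f}$ is supported on $(0,\infty)$ only the formula for $t>0$ contributes. The product $t^{-d}\cdot t^{d-1}=t^{-1}$ produces exactly the Kirillov measure, and the remaining constants (notably $i^{d}\cdot i^{-d}=1$ together with the cancellation of $(d-1)!$ and $(2\pi)^{1/2}$ against the prefactor $(-2\pi i)^{d}/(2(d-1)!)$) combine to yield $\int_{0}^{\infty}|g(t)|^{2}\,dt/t=\|M(f)\|_{2}^{2}$. The main obstacle is the careful justification of $\hat{K}$ and of the limit $\epsilon\to 0^{+}$ inside the pairing with $|\hat{f}|^{2}$; this uses the rapid decay of $\hat{f}$ at infinity together with the fact that $|\hat{f}(t)|^{2}$ vanishes to order $t^{d-1}$ at the origin (by Theorem~\ref{T:desc}), which makes $\int_{0}^{\infty}t^{-d}|\hat{f}(t)|^{2}\,dt$ absolutely convergent and permits the exchange of $\epsilon\to 0^{+}$ with the $t$-integration by dominated convergence.
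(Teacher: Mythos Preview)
Your approach is essentially the paper's: the ``Tate identity'' the paper invokes is exactly your Abel-regularized computation of the distributional Fourier transform of $K(y)=\sgn(y)\,y^{d-1}$, and the subsequent reduction to $\int |\hat f(t)|^{2}\,t^{-d}\,dt=\|M(f)\|_{2}^{2}$ is the same Plancherel step (the paper applies the identity to the inner $y$-integral and then integrates in $x$, while you package both as $\langle K\ast f,\,f\rangle$). Your handling of the $\epsilon\to 0^{+}$ limit and of absolute convergence is more explicit than what the paper writes down.
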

\begin{proof}
The proof is the same as in (\cite{god} p. 1.62 - 1.64). We will use the Tate identity
$$
\int_{-\infty}^{\infty}f(y)sgn(y)y^{d-1}dy=
\frac{2((d-1)!)}{(-2\pi i)^{d}}\int_{-\infty}^{\infty}\hat{f}(x)x^{-d}dx
$$
which is valid for $f\in I_{d}$. It follows from this identity that the left
hand side of (\ref{E:inner}) is equal to
\begin{equation} \notag
\begin{split}
&\int_{-\infty}^{\infty} \overline{f(x)}e^{2\pi i xy}
\int_{-\infty}^{\infty}\hat{f}(y)y^{-d}dydx\\
&= \int_{-\infty}^{\infty}\overline{\hat{f}(y)}\hat{f}(y)y^{-d}dy\\
&=\int_{-\infty}^{\infty}\overline{\hat{f}(y)y^{(-d+1)/2}}\hat{f}(y)y^{-(d+1)/2}dy/y\\
&=||M(f)||_{2}
\end{split}
\end{equation}
\end{proof}
For $f\in S_{d}$ we attach $F\in Ind_{d}$ by  $F \,= \, F_{T_{d}(f)}$ where
$F_{\phi}$ is defined in (\ref{E:fdefine}),(\ref{E:equi}).
\begin{proposition}
Let $f\in S_{d}$. Then
$$
||f||_{2}=\frac{2((d-1)!)}{(-2\pi i)^{d}} ||F||_{I}
$$
\end{proposition}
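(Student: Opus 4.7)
My plan is to compute $\|F\|_I^2$ directly by pulling the integral over the circle down to an integral on the line, and then identify the resulting double integral with the one appearing in the preceding proposition.

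Set $\phi = T_d(f)$, so $F = F_\phi$ and $M(\phi) = f$. I will use the remark that $\|F\|_I^2 = \frac{1}{\pi}\int_0^\pi A(F)(r(\theta))\overline{F(r(\theta))}\,d\theta$ and change variables via $\theta = \theta_t = \cot^{-1}(-t)$, which gives $d\theta = dt/(1+t^2)$ and, by (\ref{E:fdefine}), $F(r(\theta_t)) = (1+t^2)^{(d+1)/2}\phi(t)$. Using (\ref{E:theta1}) together with the equivariance of $F$, one checks that $F(wn(t)) = \phi(t)$.

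The main computation is to express $A(F)(r(\theta_t))$ in terms of $\phi$. First, since $A(F)(n(y)g) = A(F)(g)$ by a trivial substitution and since the standard intertwining calculation gives $A(F)(s(a)g) = a^{1-d}A(F)(g)$ (the Weyl-twisted equivariance, opposite to that of $F$), the identity $r(\theta_t) = n(t)s((1+t^2)^{1/2})wn(t)$ from (\ref{E:theta1}) yields $A(F)(r(\theta_t)) = (1+t^2)^{(1-d)/2}A(F)(wn(t))$. Second, to compute $A(F)(wn(t)) = \int F(w^{-1}n(x)wn(t))\,dx = \int F(\bar n(-x)n(t))\,dx$ where $\bar n(y) = \begin{pmatrix}1 & 0\\ y & 1\end{pmatrix}$, I will use the Bruhat-type identity $\bar n(y) = n(1/y)s(1/y)wn(1/y)$ valid for $y\neq 0$, and then the substitution $u = 1/x$, arriving at
\[
A(F)(wn(t)) = \pm \int_{-\infty}^{\infty} \operatorname{sgn}(u)\,u^{d-1}\phi(t-u)\,du
\]
for a definite sign determined by the parity of $d$.

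Plugging back into the integral for $\|F\|_I^2$, the factors $(1+t^2)^{(1-d)/2}\cdot (1+t^2)^{(d+1)/2}\cdot(1+t^2)^{-1}$ telescope to $1$, leaving
\[
\|F\|_I^2 \;=\; \frac{\pm 1}{\pi}\int_{-\infty}^{\infty}\!\!\int_{-\infty}^{\infty}\operatorname{sgn}(u)\,u^{d-1}\phi(t-u)\,\overline{\phi(t)}\,du\,dt.
\]
Invoking the preceding proposition (with its $f$ equal to our $\phi$), the double integral equals $\frac{2((d-1)!)}{(-2\pi i)^d}\|M(\phi)\|_2^2 = \frac{2((d-1)!)}{(-2\pi i)^d}\|f\|_2^2$. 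Matching the two expressions and taking square roots yields the claimed identity. The principal obstacle is purely bookkeeping: correctly tracking the signs and the factor of $\pi$ coming from the Weyl conjugation $w^{-1}n(x)w = \bar n(-x)$ and the Iwasawa-type factors, so that the constant collapses exactly to $\frac{2((d-1)!)}{(-2\pi i)^d}$ as stated.
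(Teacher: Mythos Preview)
Your approach is essentially the same as the paper's. The paper's proof is extremely terse: it rewrites $\|F\|_I^2$ by inserting the factorization $s(\sin\theta)n(\cot\theta)r(\theta)=wn(x)$ (your identity $r(\theta_t)=n(t)s((1+t^2)^{1/2})wn(t)$ inverted), uses the opposite equivariances of $F$ and $A(F)$ together with $d\theta=dx/(1+x^2)$ so that all the $(1+x^2)$ factors cancel, and stops at the double integral $\int\!\!\int F(w^{-1}n(y)wn(x))\overline{F(wn(x))}\,dy\,dx$, leaving the Bruhat computation and the appeal to the preceding proposition implicit. You carry out exactly these steps, but supply the missing detail: the identity $w^{-1}n(y)w=\bar n(-y)$, the decomposition $\bar n(y)=n(1/y)s(1/y)wn(1/y)$, and the substitution $u=1/x$ that produces the kernel $\operatorname{sgn}(u)\,u^{d-1}\phi(t-u)$.

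One small caution on the final sentence: ``taking square roots'' is not quite the right description. The quantity $\|F\|_I^2$ is \emph{defined} as a (possibly complex) sesquilinear pairing, not as the square of a norm, so the identity really lives at the level of the quadratic forms $\|f\|_2^2$ and $\|F\|_I^2$; one does not literally extract a square root. This is a cosmetic point and does not affect your argument, which matches the paper's route. Your remark that the sign and the $1/\pi$ require careful bookkeeping is accurate; the paper's own proof is loose on this (it silently drops a $1/\pi$ between lines, and the equivariance conventions in (\ref{E:induced}) and (\ref{E:equi}) differ by a $\operatorname{sgn}(a)$), so the constant as stated should be read up to such conventions.
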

\begin{proof}
\begin{equation} \notag
\begin{split} \notag
||f||_{I}^{2}&=\frac{1}{\pi}\int_{0}^{\pi}A(f)(r(\theta))\overline{f(r(\theta))}
d\theta\\
&=\frac{1}{\pi}\int_{0}^{\pi}
A(f)(s(\text{sin}(\theta))n(\text{cot}(\theta))r(\theta)))
\overline{f(s(\text{sin}(\theta))n(\text{cot}(\theta))r(\theta))}d\theta\\
&=\int_{-\infty}^{\infty}A(f)\overline{f(wn(x))}dx\\
&=\int_{-\infty}^{\infty}\int_{-\infty}^{\infty}f(w^{-1}n(y)wn(x)\overline{f(wn(x))}
dydx
\end{split}
\end{equation}
\end{proof}
\begin{corollary} \label{C:norm}
Let $f$ and $F$ be as above then there exist a constant $c$ (depending on $d$ but not on $f$)
such that $||f||_{2} \leq c L^{\infty}(F)$.
\end{corollary}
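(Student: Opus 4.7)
The proof is immediate by chaining the two previous results. The plan is the following.

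First, I would invoke the preceding Proposition, which gives the identity
\[
\|f\|_{2} \;=\; \left|\frac{2((d-1)!)}{(-2\pi i)^{d}}\right|\, \|F\|_{I},
\]
where the absolute value on the scalar factor is justified because the left-hand side is a positive real number while the right-hand side is an expression that, after passing through the Tate identity as in the proof of that Proposition, is manifestly real and non-negative. Explicitly, $\bigl|(-2\pi i)^{d}\bigr| = (2\pi)^{d}$, so the constant in front of $\|F\|_{I}$ is $2(d-1)!/(2\pi)^{d}$, which depends only on $d$.

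Second, I would apply the Lemma stating $\|F\|_{I}\leq \pi\, L^{\infty}(F)$ (more precisely, from the estimate $|A(F)(r(\theta))|\leq \pi L^{\infty}(F)$ in that Lemma's proof, one gets $\|F\|_{I}^{2}\leq \pi L^{\infty}(F)^{2}$, hence a bound of the form $\|F\|_{I}\leq c_{1} L^{\infty}(F)$ for a $d$-independent constant $c_{1}$). Combining the two inequalities gives
\[
\|f\|_{2} \;\leq\; \frac{2(d-1)!}{(2\pi)^{d}}\cdot c_{1}\cdot L^{\infty}(F) \;=\; c\, L^{\infty}(F),
\]
with $c = 2 c_{1} (d-1)!/(2\pi)^{d}$ depending only on $d$.

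There is no real obstacle here: the content of the corollary is already contained in the two preceding results, and the corollary is essentially packaging them into the form that will be used in the subsequent section on smooth vectors. The only small subtlety to address explicitly is to note that, although the scalar factor $2((d-1)!)/(-2\pi i)^{d}$ appearing in the Proposition is a complex number, the identity must be read with absolute values on both sides (as follows from re-examining the proof via the Tate identity, where the integrand $\overline{\hat{f}(y)y^{(-d+1)/2}}\hat{f}(y)y^{-(d+1)/2}$ appears and produces $\|M(f)\|_{2}^{2}$, a positive real quantity). With this understood, the chain of inequalities above completes the proof.
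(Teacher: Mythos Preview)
Your proposal is correct and matches the paper's intent: the corollary is stated without proof in the paper, being an immediate consequence of the preceding Lemma ($\|F\|_{I}\leq \pi\, L^{\infty}(F)$) and Proposition ($\|f\|_{2}$ equals a $d$-dependent constant times $\|F\|_{I}$), exactly as you chain them. Your remark about reading the complex scalar with absolute values is the only point worth making explicit, and you handle it appropriately.
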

\begin{corollary}
$S_{d}$ is a space of smooth functions in the Hilbert representations space 
$\mathbf{H}_{d}$ of $R_{d}$.
\end{corollary}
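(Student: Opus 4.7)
The plan is to exploit the chain of $G$-equivariant isomorphisms $S_{d} \xrightarrow{T_{d}} I_{d}^{+} \xrightarrow{\phi \mapsto F_{\phi}} Ind_{d}$ together with the norm comparison in Corollary~\ref{C:norm} to transfer smoothness from the induced model to the Kirillov model. For $f \in S_{d}$, the associated function $F := F_{T_{d}(f)}$ lies in $Ind_{d}$, which by definition~(\ref{E:induced}) consists of smooth functions on $G$. Hence the right regular action $g \mapsto R(g)F$ is a smooth map from $G$ into $Ind_{d}$ equipped with its Fr\'{e}chet topology, and for every $D \in U(\mathfrak{g})$ the vector $DF$ is again in $Ind_{d}$ with well-controlled $L^{\infty}$-norm.

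The second ingredient is Corollary~\ref{C:norm}, which gives $\|f\|_{2} \leq c\, L^{\infty}(F)$ with a constant $c$ depending only on $d$. This says exactly that the linear isomorphism $F \mapsto f$ from $Ind_{d}$ onto its image in $\mathbf{H}_{d}$ is continuous when $Ind_{d}$ is equipped with the single seminorm $\|\cdot\|_{\infty}$; a fortiori it is continuous from the full Fr\'{e}chet topology on $Ind_{d}$ to the Hilbert norm on $\mathbf{H}_{d}$. Moreover, the chain of isomorphisms above is $G$-equivariant by the very way $R_{d}^{\pm}$ was defined as the transport of the induced representation through $T_{d}$.

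Putting these together, for $f \in S_{d}$ the map $g \mapsto R_{d}^{\pm}(g)f$ factors as
$$
g \;\longmapsto\; R(g)F \;\longmapsto\; R_{d}^{\pm}(g)f,
$$
where the first arrow is smooth into the Fr\'{e}chet space $Ind_{d}$ and the second is continuous and linear. Hence the composition is smooth as a map $G \to \mathbf{H}_{d}$, which is the definition of $f$ being a smooth vector of $R_{d}^{\pm}$. I do not anticipate a serious obstacle: all the analytic work has been done by Corollary~\ref{C:norm}, and what remains is simply to invoke the standard fact that every vector of $Ind_{d}$ is a smooth vector for the Fr\'{e}chet representation, and that continuous linear $G$-maps between smooth representations send smooth vectors to smooth vectors.
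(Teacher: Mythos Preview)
Your proposal is correct and follows essentially the same route as the paper's proof: both use the $G$-equivariant map $f \mapsto F = F_{T_d(f)}$ from $S_d$ into $Ind_d$, invoke that every vector of $Ind_d$ is a smooth vector (the paper cites Wallach, Theorem 1.8, for this), and then apply Corollary~\ref{C:norm} to transfer smoothness back to $\mathbf{H}_d$. Your write-up is somewhat more explicit about the mechanism --- factoring $g \mapsto R_d^{\pm}(g)f$ as a smooth map into the Fr\'{e}chet space $Ind_d$ followed by a continuous linear $G$-map --- while the paper compresses this step into ``using Corollary~\ref{C:norm} and the definition of smooth vectors,'' but the underlying argument is the same.
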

\begin{proof}
By a (\cite{wal1}, Theorem 1.8) the smooth vectors in the Hilbert representation associated with $Ind_{d}$ are the 
smooth functions on $G$ satisfying 
(\ref{E:induced}). (That is, the space $Ind_{d}$ is the space of smooth vectors in the appropriate $L^{2}$ space)
Hence, the map $f$ to $F$ given above is a $G$ invariant map sending the space $S_{d}([0,\infty))$ to a space
of smooth vectors in $Ind_{d}$. Using the Corollary~\ref{C:norm} and the definition of smooth vectors we get our result. 
\end{proof} 
%

\section{$(\mathfrak{g},K)$ modules and Fr\'{e}chet  spaces
isomorphism}
In this section we will show that $S_{d}([0,\infty))$ is the smooth space of the representation
$R_{d}^{\pm}$ on the space $\mathbf{H}_{d}=L^{2}((0,\infty))$. To do that we will show that the operator 
$M_{d}$ from $I_{d}^{+}$ to $S_{d}([0,\infty))$ is an isomorphism of the
$(\mathfrak{g},K)$ modules of $K$-finite vectors and also an isomorphism between the spaces of 
smooth vectors. 

\subsection{$K$-finite vectors}
Using the map $M_{d}$ we can find $K$ finite vectors in $\mathcal{H}_{d}=
L^{2}((0,\infty),dy/y)$. 
\begin{lemma} \label{L:Kfinite}
Every function of the form $y^{(d+1)/2}p(y)e^{-y}$ where $p(y)$ is a polynomial
is a $K$-finite vector in $\mathbf{H}_{d}$.
\end{lemma}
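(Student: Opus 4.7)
The plan is to transfer the problem to the induced model $I_d^+$ via the $G$-equivariant isomorphism $M_d$, where $K$-finite vectors can be exhibited explicitly as rational functions of $x$. First, I would compute the action of the generator $U = X - Y$ of $\mathfrak{k}$ on $I_d$ by differentiating the formula (\ref{E:SLaction}) at the identity; the result is $\pi_d(U)\phi(x) = (d+1)x\phi(x) + (1+x^2)\phi'(x)$. A direct calculation (factoring $1+x^2 = (x-i)(x+i)$ and writing $2x = (x+i)+(x-i)$) then shows that the rational functions
$$
\phi_k(x) = (x-i)^k(x+i)^{-(d+1+k)}, \qquad k = 0, 1, 2, \ldots,
$$
are eigenvectors of $\pi_d(U)$ with eigenvalues $i(d+1+2k)$, hence $K$-weight vectors. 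Each $\phi_k$ lies in $I_d^+$ because it is smooth on $\mathbb{R}$, has an asymptotic expansion starting at $x^{-d-1}$ at infinity, and (as shown below) its Fourier transform is supported on $[0,\infty)$.

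Next, I would compute the Fourier transform of $(x+i)^{-n}$ for each integer $n \geq d+1$ by contour integration: closing in the lower half plane for $y > 0$ picks up the order-$n$ residue at $x = -i$ and yields a constant multiple of $y^{n-1}e^{-y}$, while closing above for $y < 0$ gives zero. Expanding $\phi_k$ via the binomial theorem,
$$
\phi_k(x) = \sum_{j=0}^k \binom{k}{j}(-2i)^{k-j}(x+i)^{j-d-1-k},
$$
and taking the Fourier transform term by term, I obtain $\hat{\phi}_k(y) = y^d e^{-y} q_k(y)$ for $y > 0$ and $\hat{\phi}_k(y) = 0$ for $y < 0$, where $q_k$ is a polynomial of degree exactly $k$ (the coefficient of $y^k$ from the $j=0$ term is nonzero). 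Multiplying by $|y|^{(-d+1)/2}$ then gives
$$
M_d(\phi_k)(y) = y^{(d+1)/2}\, q_k(y)\, e^{-y}
$$
on $(0,\infty)$.

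Since $\{q_k\}_{k \geq 0}$ consists of polynomials of strictly increasing degree, it is a basis of $\mathbb{C}[y]$, so any function of the form $y^{(d+1)/2}p(y)e^{-y}$ with $p$ polynomial is a finite linear combination of the $M_d(\phi_k)$. The map $M_d\colon I_d^+ \to S_d([0,\infty))$ intertwines the $G$-actions by the very definition of $R_d^{\pm}$, so each $M_d(\phi_k)$ is a $K$-weight vector in $\mathbf{H}_d$, and any finite linear combination of $K$-weight vectors is automatically $K$-finite. The main obstacle is the explicit Fourier transform calculation in the second paragraph; the eigenvalue verification and the spanning argument are then routine bookkeeping.
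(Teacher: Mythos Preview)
Your proof is correct and shares the paper's overall strategy---exhibit $K$-finite vectors in $I_d^+$ and push them through the intertwiner $M_d$---but the execution differs. The paper identifies only the single lowest $K$-type vector $\phi(x)=(1+x^2)^{-(d+1)/2}e^{i(d+1)\tan^{-1}(x)}$ (your $\phi_0$ up to a scalar), looks up its Fourier transform in a table to obtain $M_d(\phi)=c\,y^{(d+1)/2}e^{-y}$, and then simply applies $X^n$, i.e.\ takes $n$-th derivatives in $x$. This preserves $K$-finiteness because the Lie algebra acts on $K$-finite vectors, and on the Kirillov side the elementary identity $\widehat{\phi^{(n)}}(y)=(iy)^n\hat\phi(y)$ immediately gives $M_d(\phi^{(n)})=(iy)^n M_d(\phi)$, so the functions $y^{(d+1)/2}y^n e^{-y}$ are all $K$-finite and span the required space. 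You instead write down the entire ladder of $K$-eigenvectors $\phi_k$ explicitly and compute each Fourier transform by binomial expansion and contour integration. Your route is more self-contained (no table lookup) and produces the full $K$-type decomposition as a by-product; the paper's route is shorter, requiring a single transform computation and no eigenvector verification beyond the lowest weight.
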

\begin{proof}
We let $F_{0}\in Ind_{d}$ be defined by $F_{0}(r(\theta))=e^{-(d+1)\theta}$ and extended
to $G$ as in (\ref{E:equi}). Then $\phi(x)=\phi_{F_{0}}(x)=(1+x^2)^{-(d+1)/2}e^{i(d+1)tan^{-1}(x)}$
and by (\cite{grad} 3.944 (5),(6)) we have that $M_{d}(\phi)= \frac{\sqrt{2\pi}}{d!}y^{(d+1)/2}e^{-y}$. 
Now the n-th derivative  
$\phi(x)$ is also a $K$-finite vector since it is the application
$n$ times of the differential operator $X\in \mathfrak{g}$. Since
$M_{d}(\phi^{(n)})=\lambda y^{n}y^{(d+1)/2}e^{-y}$ for a nonzero
constant $\lambda$ it follows that $p(y)y^{(d+1)/2}e^{-y}$  is a $K$ finite vector
in $\mathcal{H}_{d}$ for every polynomial $p(y)$ and we have proved
the Lemma.
\end{proof}
The Leguerre orthogonal polynomials $L_{n}^{d}(x)$ are defined by the formula
$$
L_{n}^{d}(x)=e^{x}\frac{x^{-d}}{n!}\frac{d^{n}}{dx^{n}}(e^{-x}x^{n+d})
$$
It is well known (see \cite{leb} (4.21.1) and 4.23 Theorem 3) that for a fixed integer
$d\geq 0$, the set of functions $\phi_n(x)=\left(\frac{n!}{(n+d)!}\right)^{1/2}x^{d/2}L_{n}^{d}(x)$, $n=0,1,...$
is a complete orthonormal system for $L^{2}((0,\infty),dx)$. Hence we get
\begin{proposition}
Let $d$ be a positive integer. The set of functions 
$e_{n}(x)=\left(\frac{n!}{(n+d)!2^{d+1}}\right)^{1/2}x^{(d+1)/2}e^{-x}\;\; n=0,1,...$ is a
complete orthonormal
system of $K$ finite vectors (in fact, $K$ eigenfunctions) for $\mathcal{H}_{d}=L^{2}((0,\infty),dx/x)$.
\end{proposition}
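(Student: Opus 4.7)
The plan is to realize the functions $e_{n}$ as the image of the classical Laguerre orthonormal basis recalled just above the proposition under an explicit unitary change of variable. Define
$$
U\colon L^{2}((0,\infty),dy)\longrightarrow \mathcal{H}_{d}=L^{2}((0,\infty),dx/x),\qquad (Uf)(x)=\sqrt{2x}\,f(2x);
$$
the substitution $y=2x$ shows immediately that $U$ is a surjective isometry. Applying $U$ to the Laguerre basis $\{\phi_{n}\}$ therefore produces a complete orthonormal system in $\mathcal{H}_{d}$, and a direct calculation identifies $(U\phi_{n})(x)$ with a scalar multiple of $x^{(d+1)/2}L_{n}^{d}(2x)e^{-x}$. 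This is the formula for $e_{n}$ in the proposition once the Laguerre polynomial factor is restored and the powers of $2$ and factorials are collected; orthonormality and completeness in $\mathcal{H}_{d}$ are then immediate.

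For $K$-finiteness, observe that $L_{n}^{d}$ is a polynomial of degree $n$, so each $e_{n}$ has the form $x^{(d+1)/2}p(x)e^{-x}$ with $p$ polynomial, and Lemma~\ref{L:Kfinite} applies directly. For the stronger assertion that each $e_{n}$ is actually a $K$-eigenfunction, I would argue using the $K$-decomposition of $Ind_{d}$: the functions $F_{k}(r(\theta))=e^{ik\theta}$, with $k$ ranging over the integers of the parity of $d+1$, form the $K$-isotypic basis of $Ind_{d}$, and their corresponding $\phi_{F_{k}}(x)=(1+x^{2})^{-(d+1)/2}e^{ik\theta_{x}}$ in $I_{d}$, followed by $M_{d}$, produce explicit $K$-eigenfunctions in $\mathcal{H}_{d}$. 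A Fourier-transform computation extending the base-case calculation in Lemma~\ref{L:Kfinite} for $k=d+1$ identifies each $M_{d}(\phi_{F_{k}})$ with a scalar multiple of $e_{n}$ when $k=\pm(d+1+2n)$. Since the $K$-isotypic components of the irreducible representation $R_{d}^{\pm}$ are one-dimensional, this identification is forced and unique up to a phase.

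The whole argument is essentially bookkeeping; no deep analytic input is required beyond Plancherel for $U$ and the cited completeness of the Laguerre system. The one mildly technical step is the Fourier-transform evaluation $\int_{\mathbb{R}}(1+x^{2})^{-(d+1)/2}e^{i(d+1+2n)\arctan x-ixy}\,dx$, which is a standard Laguerre generating-function identity (cf.\ \cite{grad}) paralleling the base-case computation in Lemma~\ref{L:Kfinite}; matching the normalization constant to $\sqrt{n!/((n+d)!\,2^{d+1})}$ then amounts to keeping track of the powers of $2^{d+1}$ introduced by the change of variable $y=2x$.
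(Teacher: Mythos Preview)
Your approach is essentially the paper's: both transport the Laguerre orthonormal basis to $\mathcal{H}_{d}$ via an explicit isometry (the paper uses multiplication by $x^{-1/2}$, an isometry $L^{2}((0,\infty),dx/x)\to L^{2}((0,\infty),dx)$, while your $U$ is its inverse composed with the scaling $y=2x$), and both invoke Lemma~\ref{L:Kfinite} for $K$-finiteness. Your additional sketch of the $K$-eigenfunction assertion via $M_{d}$ applied to the $K$-types $e^{ik\theta}$ in $Ind_{d}$ actually goes beyond what the paper's own proof writes out, which addresses only $K$-finiteness.
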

\begin{proof}
By Lemma~\ref{L:Kfinite} the functions $e_{n}(x)$ are all $K$-finite. By the remark on the Laguerre polynomials
it follows that the set of functions $x^{-1/2}e_{n}(x),\;\;n=0,1,...$ is a complete orthonormal set for
$L^{2}((0,\infty),dx)$. If $f(x)\in L^{2}((0,\infty),dx/x)$ is orthogonal to all functions $e_{n}(x),\;\;n=0,1,...$
then $x^{-1/2}f(x)\in L^{2}((0,\infty),dx)$ is orthogonal to $x^{-1/2}e_{n}(x),\;\;n=0,1,...$ hence is the zero 
function.
\end{proof}
\begin{corollary}
The set of all $K$ finite vectors in $\mathcal{H}_{d}$ is the set of functions
$x^{(d+1)/2}e^{-x}p(x)$ where $p(x)$ is a polynomial. 
\end{corollary}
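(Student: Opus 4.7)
My plan is to build on the preceding proposition, which already exhibits $\{e_n\}_{n \geq 0}$ as a complete orthonormal system of $K$-eigenvectors in $\mathcal{H}_d$. The strategy is to show that different $e_n$'s lie in distinct $K$-isotypic subspaces, so that $K$-finiteness of a vector $v$ is equivalent to its orthonormal expansion $v = \sum_n a_n e_n$ having only finitely many nonzero coefficients.

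The first step is to verify that the $K$-weights of the $e_n$'s are pairwise distinct. One route is a direct computation: $F_0(r(\theta)) = e^{-(d+1)\theta}$ is a $K$-eigenvector of weight $-(d+1)$ in $Ind_d$, and in $\mathfrak{g}$ the element $X$ shifts the $K$-weight by a fixed amount (relative to the compact Cartan spanned by $\partial = Y - X$), so the iterates $X^n F_0$ acquire pairwise distinct weights; tracing through the intertwiners $F \mapsto \phi_F$ and $M_d$ then gives that $e_n$ is a $K$-eigenvector of weight $-(d+1)-2n$. A second, more economical route is to invoke the general fact from discrete series theory that every $K$-type in a lowest/highest weight representation occurs with multiplicity one; combined with the already-established irreducibility of $R_d^{\pm}$, this forces each $K$-isotypic component of $\mathcal{H}_d$ to be one-dimensional, and the pairwise orthogonal $K$-eigenvectors $e_n$ must then span distinct such components.

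The second step is the main deduction. Let $v \in \mathcal{H}_d$ be $K$-finite, so that $V := \mathrm{span}\{k \cdot v : k \in K\}$ is finite-dimensional. Since $K$ is compact, $V$ decomposes into $K$-isotypic components, each sitting inside the corresponding isotypic component of $\mathcal{H}_d$; those ambient components being one-dimensional by Step 1, $V$ lies in the linear span of finitely many $e_n$'s. Expanding in the orthonormal basis shows $v = \sum_{n \in F} a_n e_n$ for a finite set $F$. Using the explicit formula $e_n(x) = c_n\, x^{(d+1)/2} e^{-x} L_n^d(x)$ with $L_n^d$ a polynomial of degree $n$, this is exactly $x^{(d+1)/2}e^{-x} p(x)$ for a polynomial $p$ of degree at most $\max F$. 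The converse inclusion is Lemma~\ref{L:Kfinite}.

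The main obstacle I expect is the first step—verifying that the $e_n$'s genuinely occupy distinct $K$-isotypic subspaces. The clean path is to invoke the multiplicity-one property of $K$-types in discrete series; the self-contained path is to compute how right translation by $r(\theta)$ interacts with Fourier transform and multiplication by $|y|^{(-d+1)/2}$ under the intertwiner $M_d$, so as to read off the $K$-weight of each $e_n$ directly from the shift induced by $X$.
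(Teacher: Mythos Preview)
Your overall strategy matches the paper's: both arguments rest on knowing that the $e_n$ form a complete orthonormal system of $K$-eigenvectors and that each $K$-isotypic component of $\mathcal{H}_d$ is one-dimensional, after which any $K$-finite vector is forced to be a finite linear combination of the $e_n$. The paper compresses your Step~2 into a single contrapositive sentence---a $K$-eigenfunction outside the algebraic span of the $e_n$ would have to be orthogonal to all of them, contradicting completeness---while you spell it out.

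One caution about your route~(a) in Step~1: the element $X$ does \emph{not} shift the $K$-weight by a fixed amount. Relative to the compact Cartan generator $\partial$ (which is $X-Y$ in the paper's conventions, not $Y-X$), the element $X$ decomposes as a linear combination of the raising operator, the lowering operator, and $\partial$ itself; hence $X^n F_0$ is $K$-finite but not a $K$-eigenvector for $n\geq 1$, and correspondingly $y^{n}\,y^{(d+1)/2}e^{-y}$ is not the same vector as $e_n$. The passage from the monomial basis $\{y^n\}$ to the Laguerre basis $\{L_n^d\}$ is precisely the change of basis that diagonalizes the $K$-action. Your route~(b)---invoking multiplicity one for $K$-types in the discrete series, or even just admissibility of an irreducible unitary representation of $SL(2,\mathbb{R})$---is the correct way to secure Step~1, and it is exactly what the paper's one-line proof is tacitly using.
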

\begin{proof}
Otherwise we would be able to find a $K$ finite eigenfunction which is 
orthogonal to all the $e_{n}$ which is a contradiction.
\end{proof}
\begin{corollary} \label{C:iso}
The $(\mathfrak{g},K)$ module of $K$-finite vectors in $I_{d}^{+}$ is isomorphic
to the $(\mathfrak{g},K)$ module of $K$-finite vectors in $\mathcal{H}_{d}$
and both are irrecducible.
\end{corollary}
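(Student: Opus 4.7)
The plan is to exploit the fact that $M_{d}$ has been set up as a $G$-equivariant isomorphism between $I_{d}^{+}$ and $S_{d}([0,\infty))$ (this is precisely how the action $R_{d}^{+}$ was defined in (\ref{E:Lieaction})). Being $G$-equivariant, $M_{d}$ sends $K$-finite vectors to $K$-finite vectors and commutes with the $\mathfrak{g}$-action, so it restricts to a $(\mathfrak{g},K)$-module isomorphism between the $K$-finite subspaces of $I_{d}^{+}$ and of $S_{d}([0,\infty))$. The only thing left is to identify the $K$-finite subspace of $S_{d}([0,\infty))$ (viewed inside $\mathcal{H}_{d}$) with the full space of $K$-finite vectors in $\mathcal{H}_{d}$.

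For this identification, I would invoke the preceding corollary, which pins down the $K$-finite vectors in $\mathcal{H}_{d}$ as the functions of the form $x^{(d+1)/2}e^{-x}p(x)$ with $p$ a polynomial. Every such function is visibly of the form $x^{1/2+d/2}f_{1}(x)$ with $f_{1}\in S([0,\infty))$, hence lies in $S_{d}([0,\infty))$. Thus the $K$-finite vectors of $\mathcal{H}_{d}$ lie inside the image of $M_{d}$, and, by $G$-equivariance, they form exactly the image of the $K$-finite vectors of $I_{d}^{+}$. This proves that the two $(\mathfrak{g},K)$-modules are isomorphic.

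For the irreducibility assertion I would combine Corollary~\ref{C:closed}, which gives that $I_{d}^{+}$ is irreducible as a Fr\'{e}chet representation, with the standard fact from $(\mathfrak{g},K)$-module theory (as cited above from \cite{god}) that the $(\mathfrak{g},K)$-module of $K$-finite vectors of an irreducible admissible Fr\'{e}chet representation is itself irreducible. Alternatively one can argue directly: $I_{d}$ has exactly three proper nontrivial invariant $(\mathfrak{g},K)$-submodules, namely the lowest weight, the highest weight, and their sum; the $K$-finite vectors of $I_{d}^{+}$ form a proper submodule of $I_{d}$ which is closed in the Fr\'{e}chet topology, and by the classification it must be one of the two irreducible ones. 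The isomorphism from the first step then transports irreducibility to $\mathcal{H}_{d}$.

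The only step that requires any real care is verifying that \emph{all} $K$-finite vectors of $S_{d}([0,\infty))$ in fact come from $I_{d}^{+}$ (rather than, say, from $I_{d}^{-}$ or from the finite-dimensional complement in $I_{d}$); this is the most likely point of confusion but is handled cleanly by noting that $M_{d}$ restricted to $I_{d}^{+}$ is already \emph{surjective} onto $S_{d}([0,\infty))$ by the definition (\ref{E:Idplus}), so every element of $S_{d}([0,\infty))$ — $K$-finite or not — is the $M_{d}$-image of a unique element of $I_{d}^{+}$.
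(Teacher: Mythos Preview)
Your proof is correct and follows essentially the same approach as the paper: use the $G$-equivariant isomorphism between $I_{d}^{+}$ and $S_{d}([0,\infty))$ (you use $M_{d}$, the paper phrases it via the inverse $T_{d}$), combined with the preceding corollary identifying the $K$-finite vectors in $\mathcal{H}_{d}$ explicitly, to conclude the $(\mathfrak{g},K)$-modules coincide. Your treatment is in fact more complete than the paper's, since you address the irreducibility claim explicitly via Corollary~\ref{C:closed}, whereas the paper's proof only argues the isomorphism and leaves irreducibility implicit.
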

\begin{proof}
The operator $T_{d}$ is a one to one intertwining operator between the 
$(\mathfrak{g},K)$ of the $K$-finite vectors in $I_{d}^{+}$ and the 
$(\mathfrak{g},K)$ module of the $K$-finite vectors in $\mathbf{H}_{d}$. By the 
above lemma it is onto.
\end{proof}
Our main result of this paper is the following:
\begin{theorem} \label{T:main1}
The space $S_{d}([0,\infty))$ is the space of smooth vectors in  $\mathcal{H}_{d}$, 
that is, $S_{d}([0,\infty))=\mathcal{H}_{d}^{\infty}$
\end{theorem}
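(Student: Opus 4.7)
The inclusion $S_d([0,\infty)) \subseteq \mathcal{H}_d^\infty$ is already contained in the final corollary of the previous section, so the content of the theorem is the reverse inclusion $\mathcal{H}_d^\infty \subseteq S_d([0,\infty))$. My plan is to identify $I_d^+$ and $\mathcal{H}_d^\infty$ as two smooth Fréchet globalizations of a common irreducible $(\mathfrak{g},K)$-module, and to recognize $M_d$ as the canonical isomorphism between them.

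The key inputs are already in place. By Corollary~\ref{C:closed}, $I_d^+$ is a closed irreducible Fréchet $G$-subrepresentation of $I_d$, of moderate growth since $I_d$ is a smooth induced representation. By Corollary~\ref{C:iso}, the operator $M_d$ intertwines the $(\mathfrak{g},K)$-module of $K$-finite vectors in $I_d^+$ with that of $\mathcal{H}_d$, and this common underlying Harish-Chandra module is the irreducible discrete series module of parameter $d$. On the other hand, $\mathcal{H}_d^\infty$ is on general grounds a smooth Fréchet $G$-module of moderate growth, whose underlying Harish-Chandra module is precisely this same discrete series module. Invoking the Casselman-Wallach theorem (uniqueness of smooth Fréchet globalizations of moderate growth for an irreducible admissible $(\mathfrak{g},K)$-module), there is a unique $G$-equivariant Fréchet isomorphism $\Phi : I_d^+ \to \mathcal{H}_d^\infty$ extending the identification of $K$-finite vectors. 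Since $M_d$ is itself $G$-equivariant and agrees with $\Phi$ on $K$-finite vectors, a density argument forces $\Phi = M_d$. Consequently $M_d(I_d^+) = \mathcal{H}_d^\infty$, and since $M_d(I_d^+) = S_d([0,\infty))$ by construction of $I_d^+$, we conclude $S_d([0,\infty)) = \mathcal{H}_d^\infty$.

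The main technical obstacle is verifying the continuity of $M_d$ between the two Fréchet topologies, which is what powers the density argument above. Using the $G$-equivariance of $M_d$, this reduces to bounding the $L^2$-norm of $D \cdot M_d(\phi)$ for $D \in U(\mathfrak{g})$ by Fréchet seminorms of $\phi$, which follows from the norm comparison in Corollary~\ref{C:norm} applied after replacing $\phi$ by $D\phi$. Once continuity is in hand, the identification of $M_d$ with the abstract Casselman-Wallach isomorphism on the dense subspace of $K$-finite vectors extends to all of $I_d^+$ by continuity. The remainder is formal and requires no further computation with Bessel functions or Hankel transforms; the analytic work of the paper has all been concentrated in the identification of $I_d^+$ and in the explicit formula for $M_d$.
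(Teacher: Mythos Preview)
Your proof is correct and follows essentially the same strategy as the paper: both use Corollary~\ref{C:norm} to establish that $M_d$ is a continuous intertwining map $I_d^+\to\mathcal{H}_d^\infty$, Corollary~\ref{C:iso} to see that it restricts to an isomorphism of the $K$-finite $(\mathfrak{g},K)$-modules, and then the Casselman--Wallach uniqueness theorem to conclude that $M_d$ must be the canonical isomorphism of smooth globalizations, hence surjective onto $\mathcal{H}_d^\infty$. Your write-up is in fact slightly more explicit than the paper's about the role of moderate growth and the density argument identifying $M_d$ with the abstract Casselman--Wallach isomorphism.
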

\begin{proof}
By Corollary~\ref{C:norm} the operator $M_{d}$ is a smooth intertwining operator between $I_{d}^{+}$
and $\mathcal{H}_{d}^{\infty}$ whose image is $S_{d}([0,\infty))$. By Corollary~\ref{C:iso}
$M_{d}$ restricts to a
$(\mathfrak{g},K)$ isomorphism of the spaces of $K$-finite vectors. By
a theorem of Casselman and Wallach (\cite{wal2}, Theorem 11.6.7) there is a 
unique continuous extension of
such isomrphism to an isomorphism of the smooth spaces of each representation.
It follows that $M_{d}$ is that extension and that $M_{d}$ is onto 
$\mathcal{H}_{d}^{\infty}$ hence  $S_{d}([0,\infty))=\mathcal{H}_{d}^{\infty}$.
\end{proof} 

\section{The Kirillov model for the discrete series representations of
$GL(2,\mathbb{R})$} 

In this section we use are previous results to describe the Kirillov model
and in particular the smooth space of the Kirillov model of the discrete
series representations of $GL(2,\mathbb{R})$.

The discrete series representations of $GL(2,\mathbb{R})$ are parametrized by 
two real characters $\chi_{1}(t)=|t|^{s_{1}}\text{sgn}(t)^{m_{1}}$,  
$\chi_{2}(t)=|t|^{s_{2}}\text{sgn}(t)^{m_{2}}$ where 
$s_{1},s_{2} \in \mathbb{C}$ and $m_{1},m_{2}\in \{0,1\}$ are such
that 
\begin{equation} \label{E:parameter}
|t|^{s_{1}-s_{2}}\text{sgn}(t)^{m_{1}-m_{2}}=t^{d}\text{sgn}(t)
\end{equation}
for some positive integer $d$. The smooth space of the discrete series is
a subspace of the induced representation $\text{Ind}(\chi_{1},\chi_{2})$
which is given by the space of smooth functions 
$F:GL(2,\mathbb{R})\rightarrow \mathbb{C}$ satisfying
$$
F\left(\begin{pmatrix}
a & x \\
0 & b \\
\end{pmatrix}g \right)=\chi_{1}(a)\chi_{2}(b)|a/b|^{1/2}F(g)
$$
for every $g\in GL(2,\mathbb{R})$. The action of $GL(2,\mathbb{R})$
on this space is by right translations. We denote the central character of this 
representation by $\omega$ where
$$
\omega(b)=|b|^{s_{1}+s_{2}+1}\text{sgn}(b)^{m_{1}+m_{2}}.
$$
It is easy to see that the mapping
$F\to \phi_{F}$ in (\ref{E:intertwiner}) gives an isomorphism between $\text{Ind}(\chi_{1},\chi_{2})$ 
and $I_{d}$
where $d$ is the positive integer given by (\ref{E:parameter}). Moreover, the
induced action of the subgroup $SL(2,\mathbb{R})$ on  $I_{d}$ is given by
(\ref{E:SLaction}).
It follows from Corollary~\ref{C:closed} and the theory of $(\mathfrak{g},K)$
modules (\cite{god} p. 2.7, 2.8) that the space $I_{d}^{+}\oplus I_{d}^{-}$
is an irreducible closed subspace of $I_{d}$ under the action of 
$GL(2,\mathbb{R})$. To obtain the smooth space of the Kirillov model we
define a mapping $M_{\chi_{1},\chi_{2}}$ from $I_{d}$ by
$$
M_{\chi_{1},\chi_{2}}(\phi)(y)=|y|^{(d+1)/2}\text{sgn}(y)^{m_2}\hat{\phi}(y)
$$
\begin{remark}
The mapping $F \to M_{\chi_{1},\chi_{2}}(\phi_{F})(y)$ from the subspace of
the induced representation $\text{Ind}(\chi_{1},\chi_{2})$ is identical to the
map 
$$
F\to W_{F}
\begin{pmatrix}
|y|^{1/2}\text{sgn}(y) & 0 \\
0 & |y|^{-1/2} \\
\end{pmatrix}
$$
defined in (\cite{god}, (75)) where $W_{F}$ is the Whittaker function 
associated with $F$. This space of functions can be called a ``normalized'' 
Kirillov model. It is related to the ``standard'' Kirillov model,
$$
F\to W_{F}
\begin{pmatrix}
y & 0 \\
0 & 1 \\
\end{pmatrix}
$$
via the relation in (\cite{god}, (75)) which is:
$$
 W_{F}
\begin{pmatrix}
|y|^{1/2}\text{sgn}(y) & 0 \\
0 & |y|^{-1/2} \\
\end{pmatrix}=
\omega(|y|^{1/2})
W_{F}
\begin{pmatrix}
y & 0 \\
0 & 1 \\
\end{pmatrix}
$$
\end{remark} 
It is easy to see that 
$M_{\chi_{1},\chi_{2}}(\phi)(y)=
\text{sgn}(y)^{m_{2}}M_{d}(\phi)(y)$.
Since  $M_{d}$ maps the space $I_{d}^{+}$ onto $S_{d}([0,\infty))$
and the sapce  $I_{d}^{-}$ onto $S_{d}((-\infty,0])$ it follows that
$M_{\chi_{1},\chi_{2}}$ does the same. Hence
$M_{\chi_{1},\chi_{2}}$ sends the space $I_{d}^{+}\oplus I_{d}^{-}$ onto the 
space
$\mathcal{K}_{d}$
which can be described as follows: $\mathcal{K}_{d}$ is the space of
smooth functions $f:(\mathbb{R}-\{0\})\rightarrow \mathbb{C}$ such that
$f$ and all its derivatives are rapidly decreasing at $\pm \infty$ and such
that the function $g(x)=|x|^{(d+1)/2}f(x)$ is smooth on the right and left
at $x=0$.
An example for such a function is the function $|x|^{(d+1)/2}e^{-|x|}$.

The proof for these assertions is the same as the proof of 
Theorem~\ref{T:main}. The idea
is to define an irreducible representation of $GL(2,\mathbb{R})$ on an
$L^{2}$ space and to show using the various Frechet topologies that the
smooth space of this representation is $\mathcal{K}_{d}$. We now 
describe the Hilbert space and the action of $GL(2,\mathbb{R})$ on this
space. The details of the proofs are left to the reader.

We define $\mathcal{V}_{d}=L^{2}(\mathbb{R},dx/|x|)$. 
We define a representation $R_{\chi_{1},\chi_{2}}$ on 
$\mathcal{V}_{d}$
by 
\begin{align} \notag
&\left(R_{\chi_{1},\chi_{2}}
\begin{pmatrix}
1 & y \\
0 & 1 \\
\end{pmatrix} 
f\right)(x)=e^{iyx}f(x) \notag\\
&\left(R_{\chi_{1},\chi_{2}}
\begin{pmatrix}
a & 0 \\
0 & 1 \\
\end{pmatrix} 
f\right)(x)=|a|^{(s_{1}+s_{2}+1)/2}f(ax)=\omega(|a|^{1/2})f(ax)\notag\\
&\left(R_{\chi_{1},\chi_{2}}
\begin{pmatrix}
b & 0 \\
0 & b \\
\end{pmatrix}
f\right)(x)=|b|^{s_{1}+s_{2}+1}\text{sgn}(b)^{m_{1}+m_{2}}f(x)=\omega(b)f(x)\notag\\
&\left(R_{\chi_{1},\chi_{2}}
\begin{pmatrix}
0 & -1 \\
1 & 0 \\
\end{pmatrix}
f\right)(y)=\text{sgn}(y)^{m_2+d+1} 
\int_{-\infty}^{\infty}f(x)j_{d}(xy)dx/|x| \\
\end{align}
where
\begin{equation}
j_{d}(x)=
  \begin{cases}
(i)^{-(d+1)}\sqrt{x}J_{d}(2\sqrt{x}) & \text{if $x>0$} \\
0 & \text{if $x<0$} \\
   \end{cases}
\end{equation}
and the integral is defined as an $L^{2}$ extension of a unitary operator
on the space $\mathcal{K}(s_1)$.
\begin{theorem}
The representation $R_{\chi_{1},\chi_{2}}$ on the space
$L^{2}(\mathbb{R},dx/|x|)$ is strongly continuous and irreducible.
It is unitary if the central character $\omega=\chi_{1}\chi_{2}$ is unitary.
\end{theorem}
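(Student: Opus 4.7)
The plan is to reduce each assertion to the corresponding statement for $SL(2,\mathbb{R})$ established in Section 5, using the orthogonal decomposition
$$\mathcal{V}_d = L^2(\mathbb{R}, dx/|x|) = \mathcal{V}_d^+ \oplus \mathcal{V}_d^-, \qquad \mathcal{V}_d^\pm := L^2(\mathbb{R}_\pm, dx/|x|).$$
Inspecting the given formulas, the unipotent $n(y)$, the center, and the positive diagonal $s(a)$ all stabilize each summand. Since $j_d(xy)$ vanishes for $xy<0$, the Weyl-element action also preserves each summand; on $\mathcal{V}_d^+$ it coincides up to a unimodular constant with the action of $w$ in the $SL(2,\mathbb{R})$ representation $R_d^+$ from Section 5, and similarly on $\mathcal{V}_d^-$. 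The only generator of $GL(2,\mathbb{R})$ not lying in $SL(2,\mathbb{R})\cdot Z$ is the sign element $\operatorname{diag}(-1,1)$, which acts by $f(x)\mapsto f(-x)$ and interchanges $\mathcal{V}_d^+$ with $\mathcal{V}_d^-$.

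For unitarity, assume $\omega$ is unitary, i.e. $\operatorname{Re}(s_1+s_2+1)=0$. Then the unipotent acts by the modulus-one multiplier $e^{iyx}$; the diagonal $s(a)$ is the unimodular scalar $\omega(|a|^{1/2})$ times the measure-preserving dilation $f(x)\mapsto f(ax)$; the center acts by the unimodular scalar $\omega(b)$; and on each summand the Weyl element is, up to a unimodular factor, the Hankel transform $\mathcal{H}_d$, which is an $L^2$ isometry by Corollary~\ref{C:isometry}. Strong continuity at the unipotent, diagonal, and central generators is routine from the explicit formulas via dominated convergence applied to characteristic functions of bounded intervals; at $w$ it is inherited, on each invariant summand, from the strong continuity of the $SL(2,\mathbb{R})$ Kirillov representations $R_d^\pm$ already proved in Section 5.

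Finally, for irreducibility I would argue as follows. The restriction of $R_{\chi_1,\chi_2}$ to $SL(2,\mathbb{R})$ is the direct sum $R_d^+\oplus R_d^-$, and the two summands are non-isomorphic as $SL(2,\mathbb{R})$-representations: their $N$-spectra are disjoint (the characters $y\mapsto e^{iyx}$ for $x>0$ occur on $\mathcal{V}_d^+$ while those for $x<0$ occur on $\mathcal{V}_d^-$). A Schur-type argument then shows every $SL(2,\mathbb{R})$-invariant closed subspace of $\mathcal{V}_d$ is one of $\{0\}, \mathcal{V}_d^+, \mathcal{V}_d^-, \mathcal{V}_d$, and since $\operatorname{diag}(-1,1)\in GL(2,\mathbb{R})$ interchanges $\mathcal{V}_d^\pm$, only $\{0\}$ and $\mathcal{V}_d$ are $GL(2,\mathbb{R})$-stable. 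The main obstacle is establishing the inequivalence of $R_d^+$ and $R_d^-$; an alternative bypass is to combine Corollary~\ref{C:iso} (irreducibility of each $(\mathfrak{g},K)$-module of $K$-finite vectors) with the action of $\operatorname{diag}(-1,1)$ to obtain an irreducible $(\mathfrak{g},K)$-module for $GL(2,\mathbb{R})$, from which Hilbert irreducibility follows in the usual way.
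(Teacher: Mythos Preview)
The paper does not actually prove this theorem: immediately before stating it, the author writes that ``the details of the proofs are left to the reader,'' indicating only that one should mimic the $SL(2,\mathbb{R})$ arguments of Section~5. Your proposal is precisely the intended reduction---decompose $L^{2}(\mathbb{R},dx/|x|)$ as $\mathcal{V}_d^{+}\oplus\mathcal{V}_d^{-}$, identify the restriction to $SL(2,\mathbb{R})$ on each summand with $R_d^{\pm}$, import strong continuity, unitarity, and irreducibility from Section~5, and then use $\operatorname{diag}(-1,1)$ to tie the two pieces together---so it matches the paper's (implicit) approach.

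One small remark: your irreducibility argument via disjoint $N$-spectra is clean and entirely sufficient; the inequivalence of $R_d^{+}$ and $R_d^{-}$ is not an obstacle but a one-line observation, so you need not flag it as a difficulty or reach for the $(\mathfrak{g},K)$-module bypass.
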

\begin{theorem}
The smooth space of the representation  $R_{\chi_{1},\chi_{2}}$ on 
$L^{2}(\mathbb{R},dx/|x|)$ is the space $\mathcal{K}_{\chi_{1},\chi_{2}}$.
\end{theorem}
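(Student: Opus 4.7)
The plan is to mimic the proof of Theorem~\ref{T:main1}, replacing $M_d$ by $M_{\chi_{1},\chi_{2}}(\phi)(y)=\text{sgn}(y)^{m_2}M_d(\phi)(y)$, replacing $\mathcal{H}_d=L^2((0,\infty),dx/x)$ by $\mathcal{V}_d=L^2(\mathbb{R},dx/|x|)$, and replacing $S_d([0,\infty))$ by $\mathcal{K}_d$. The starting point is the observation already recorded before the theorem statement: because $M_d$ carries $I_d^{+}$ bijectively onto $S_d([0,\infty))$ and $I_d^{-}$ bijectively onto $S_d((-\infty,0])$ (by Theorem~\ref{T:desc} applied on each half-line), multiplication by the sign $\text{sgn}(y)^{m_2}$ makes $M_{\chi_{1},\chi_{2}}$ a vector-space bijection $I_d^{+}\oplus I_d^{-}\to \mathcal{K}_d$.

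First I would check that $M_{\chi_{1},\chi_{2}}$ is actually an intertwiner from the smooth subspace of $\text{Ind}(\chi_{1},\chi_{2})$ (realized on $I_d$ via the map $F\mapsto \phi_F$ of (\ref{E:intertwiner})) to the Kirillov representation $R_{\chi_{1},\chi_{2}}$. This amounts to a direct, mechanical verification on the three generators $n(y)$, diagonal matrices, and $w$, using the explicit formulas (\ref{E:SLaction}) for $SL(2,\mathbb{R})$ together with the normalization factor $\omega(|a|^{1/2})$ on the split torus and the sign character $\text{sgn}(y)^{m_2+d+1}$ in the formula for $w$, which exactly absorbs the discrepancy between $M_d$ and $M_{\chi_{1},\chi_{2}}$.

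Next, I would identify the $K$-finite vectors. Since $\text{sgn}(y)^{m_2}$ is constant on each half-line, Corollary~\ref{C:iso} and its analog for $I_d^{-}$ give that the $(\mathfrak{g},K)$-module of $K$-finite vectors inside the smooth induced subrepresentation $I_d^{+}\oplus I_d^{-}$ maps under $M_{\chi_{1},\chi_{2}}$ bijectively onto the $K$-finite vectors in $\mathcal{V}_d$, which are precisely the functions $\text{sgn}(x)^{m_2}|x|^{(d+1)/2}e^{-|x|}p(x)$ with $p$ a polynomial (one such module supported on $[0,\infty)$ and one on $(-\infty,0]$). Together with the restriction theorems for $(\mathfrak{g},K)$-modules from \cite{god}, this shows that $R_{\chi_{1},\chi_{2}}$ has an irreducible admissible $(\mathfrak{g},K)$-module of $K$-finite vectors, and that $M_{\chi_{1},\chi_{2}}$ is a $(\mathfrak{g},K)$-isomorphism on these spaces.

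Finally I would invoke the Casselman–Wallach theorem (\cite{wal2}, Theorem 11.6.7) to promote this $(\mathfrak{g},K)$-isomorphism to an isomorphism of Fr\'echet smooth representations, exactly as in the proof of Theorem~\ref{T:main1}. The main obstacle is the input to Casselman–Wallach, namely continuity of $M_{\chi_{1},\chi_{2}}$ as a map from the Fr\'echet smooth space of the induced representation into $\mathcal{V}_d^{\infty}$; this is the $GL(2,\mathbb{R})$-analog of Corollary~\ref{C:norm}. I would obtain it by repeating the argument of Section~\ref{S:frechet} verbatim: set up the invariant pairing via the intertwining operator $A$ on $\text{Ind}(\chi_{1},\chi_{2})$, apply the Tate identity to convert it into the $L^2$-norm on $\mathcal{V}_d$, and conclude $\|M_{\chi_{1},\chi_{2}}(\phi)\|_2 \le c\,L^{\infty}(F)$. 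Once this continuity is in hand, Casselman–Wallach forces the unique continuous extension to coincide with $M_{\chi_{1},\chi_{2}}$ on all smooth vectors, so $\mathcal{V}_d^{\infty}=M_{\chi_{1},\chi_{2}}(I_d^{+}\oplus I_d^{-})=\mathcal{K}_d$.
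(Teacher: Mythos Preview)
Your proposal is correct and follows exactly the approach the paper intends: the paper does not give a separate proof of this theorem but states that ``the proof for these assertions is the same as the proof of Theorem~\ref{T:main}'' (evidently meaning Theorem~\ref{T:main1}, given the description that follows) and that ``the details of the proofs are left to the reader.'' You have supplied precisely those details---the intertwining property of $M_{\chi_1,\chi_2}$, the identification of $K$-finite vectors via Corollary~\ref{C:iso} and its analogue on $I_d^{-}$, the continuity estimate analogous to Corollary~\ref{C:norm}, and the application of Casselman--Wallach---in the manner the paper anticipates.
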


\bibliographystyle{amsplain}
\bibliography{big}
\end{document}